\numberwithin{equation}{section}
\newcommand{\field}[1]{\mathbb{#1}}
\newcommand{\Z}{\field{Z}}
\newcommand{\R}{\field{R}}
\newcommand{\C}{\field{C}}
 \def\cC{\mathscr{C}}
\def\mM{\mathcal{M}}
\def\mO{\mathcal{O}}
\def\mU{\mathcal{U}}
\def\kr{\mathfrak{r}}
\def\kt{\mathfrak{t}}
\def\kg{\mathfrak{g}}
\DeclareMathOperator{\End}{End}
\DeclareMathOperator{\Hom}{Hom}
\DeclareMathOperator{\Ker}{Ker}
\DeclareMathOperator{\Dom}{Dom}
\DeclareMathOperator{\Id}{Id}
\DeclareMathOperator{\supp}{supp}
\DeclareMathOperator{\tr}{Tr}
\DeclareMathOperator{\Ind}{Ind}
\newcommand{\om}{\omega}
\newtheorem{thm}{Theorem}[section]
\newtheorem{lemma}[thm]{Lemma}
\newtheorem{prop}[thm]{Proposition}
\newtheorem{cor}[thm]{Corollary}
\theoremstyle{definition}
\newtheorem{rem}[thm]{Remark}
\theoremstyle{definition}
\newtheorem{defn}[thm]{Definition}
\newcommand{\be}{\begin{eqnarray}}
\newcommand{\ee}{\end{eqnarray}}
\newcommand{\ov}{\overline}
\newcommand{\wi}{\widetilde}
\newcommand{\var}{\varepsilon}
\newcommand{\comment}[1]{}
\begin{document}



\title{Geometric quantization for proper moment maps: the Vergne conjecture}

\author{Xiaonan Ma}

\address{Universit{\'e} Paris Diderot - Paris 7,
UFR de Math{\'e}matiques, Case 7012,
Site Chevaleret,
75205 Paris Cedex 13, France}
\email{ma@math.jussieu.fr}

\author{Weiping Zhang}

\address{Chern Institute of Mathematics \& LPMC, Nankai
University, Tianjin 300071, P.R. China}
\email{weiping@nankai.edu.cn}

\begin{abstract} We establish a geometric quantization
formula for a Hamiltonian action of a compact Lie group acting on a
noncompact symplectic manifold
with proper  moment map.
\end{abstract}

\maketitle
\tableofcontents

\setcounter{section}{-1}

\section{Introduction} \label{s0}

The purpose of this paper is to  establish a geometric quantization
formula for a Hamiltonian action of a compact Lie group acting on a
noncompact symplectic manifold with proper  moment map.
Our results provide a solution to a conjecture of
Mich\`ele Vergne in her ICM 2006 plenary lecture \cite{Ve07}.

Let $(M,\omega)$ be a  symplectic manifold with symplectic form
$\omega$, and $\dim M=n$. 
We assume that $(M,\omega)$ is prequantizable, that is,
there exists a complex line bundle $L$ (called a prequantum line
bundle) carrying a Hermitian metric $h^L$ and a Hermitian connection
$\nabla^L$ such that the associated curvature
 $R^L=\left(\nabla^L\right)^2$ verifies
\begin{align}\label{0.1}
\frac{\sqrt{-1}}{2\pi} R^L=\omega.
\end{align}

 Let $TM$ be the tangent vector bundle of $M$.
Let  $J^M$ be an almost complex structure  on
$TM$ such that
\begin{align}\label{0.2}
g^{TM}(u,v)=\omega(u,J^Mv),\ \ \ u,\ v\in TM,
\end{align}
defines a $J^M$-invariant Riemannian metric $g^{TM}$ on $TM$.

Let $G$ be a compact connected Lie group.
Let  $\kg $ denote the Lie algebra of $G$ and
$\kg^*$ denote the dual of $\kg$. Let $G$ act  on
$\kg^*$ by the coadjoint action.

We assume that  $G$ acts on the left on $M$, that this action
lifts to an action on $L$, and that $G$ preserves
$g^{TM}$, $J^M$, $h^L$ and $\nabla^L$.

For $K\in \kg$, let $K^M\in\cC^\infty(M,TM)$ denote the vector field
generated by $K$ over $M$.
The moment map  $\mu : M \to \kg^*$ is defined by the Kostant formula
 \cite{Ko70},
 \begin{align}\label{0.3}
 2 \pi \sqrt{-1}\mu(K) : 
 =\nabla ^L_{K^M}- L_K, \, \, K\in \kg.
 \end{align}
  Then, for any $K\in \kg$, we have
 \begin{align}\label{0.4}
 d\mu(K)= i_{K^M}\om.
 \end{align}

 From now on, we make the following  assumption.

 \vspace*{0.3cm}
 \noindent{\bf Fundamental Assumption}. The moment map $\mu:M\rightarrow
 {\kg}^* $ is proper,
 i.e.,  for any compact   subset $B\subset \kg^*$,
 the subset $\mu^{-1}(B)\subset M$ is compact.
 \vspace*{0.3cm}

Let $T$ be a maximal torus of $G$,
  let $\kt$ be its Lie algebra and $\kt^*$ the dual of $\kt$.
The integral lattice $\Lambda\subset \kt$ is defined as the kernel
of the exponential map $\exp: \kt\to T$, and the real weight lattice
$\Lambda^*\subset \kt^*$ is defined by $\Lambda^*:= \Hom(\Lambda,
2\pi \Z)$. We fix a positive Weyl chamber $ \kt^*_+ \subset \kt^*$.
Then the set of finite dimensional $G$-irreducible
representations is parametrized by 
$\Lambda^*_+ := \Lambda^*\cap \kt^*_+$.

Recall that $\kg= \kt\oplus \kr$, with $\kr=[\kt,\kg]$, and so
$\kg^*= \kt^*\oplus \kr^*$. So we identify 
$\Lambda^*_+$ to a subset of $\kg^*$.
For $\gamma\in \Lambda^*_+ $, we denote by $V_\gamma^G$ the
irreducible $G$-representation with highest weight $\gamma$. The
$V_\gamma^G$, $\gamma\in \Lambda^*_+ $, form a $\Z$-basis of the
representation ring $R(G)$.
Let $R[G]$ be the formal representation ring of $G$.
 For $W\in R[G]$, we denote by $W_\gamma\in \Z$ the multiplicity of
 $V^G_\gamma$ in $W$.

Take $\gamma\in \Lambda^*_+ $. If $\gamma$ is a regular value of
the moment map $\mu$, then one can construct the Marsden-Weinstein
symplectic reduction $(M_\gamma,\omega_\gamma)$, with 
$M_\gamma=G\backslash \mu^{-1}(G\cdot\gamma)$  a {\it compact} orbifold
(since $\mu$ is proper). Moreover, the line bundle $L$ (resp. the almost
complex structure $J$) induces a prequantum line bundle $L_\gamma$
(resp. an almost complex structure $J_\gamma$) over
$(M_\gamma,\omega_\gamma)$. One can then construct the associated
Spin$^c$-Dirac operator (twisted by $L_\gamma$), $D^{L_\gamma}_+:
\Omega^{0,\text{even}}\left(M_\gamma,L_\gamma\right) \to
\Omega^{0,\text{odd}}(M_\gamma,L_\gamma)$ 
(cf. \eqref{1.4}, Section \ref{s1.6})
 on $M_\gamma$, of which the index is defined by
\begin{align}\label{b0.4}
Q\left(L_\gamma\right)  = \Ind\left(D^{L_\gamma}_+\right)
:= \dim\Ker \left(D^{L_\gamma}_+\right)- \dim
\text{Coker}\left(D^{L_\gamma}_+\right) \in \Z.
\end{align}
If $\gamma\in \Lambda^*_+ $ is not a regular value
of $\mu$, then by a perturbation argument
(cf. \cite{MS99}, \cite[\S 7.4]{Par01}),
 one still gets a well-defined quantization  number
$Q(L_\gamma)$ extending the above definition.

We equip $\kg$ with an ${\rm Ad}_G$-invariant scalar product.
We will identify  $\kg$ and $\kg^*$ by this scalar product. 
Let $\pi:TM\rightarrow M$ denote the  projection from   $TM$ to $M$.
We identify $T^*M$ with $TM$ by the scalar product $g^{TM}$.

 Set ${\mathcal H}=|\mu|^2$. Let $X^{\mathcal H}=-J^M(d{\mathcal H})$
be the Hamiltonian vector field associated with ${\mathcal H}$. Then
(see (\ref{1.22}))
\begin{align}\label{bb0.4}
 X^{\mathcal H}= 2\,  \mu^M,
\end{align}
where $\mu^M\in \cC^{\infty}(M,TM)$ is
the vector field on $M$ generated by $\mu:M\rightarrow \kg$, i.e.,
for any $x\in M$, $\mu^M(x)=(\mu(x))^M(x)$.


For $a\geqslant 0$, set $M_a:={\mathcal H}^{-1}([0,a])=\{x\in M:
{\mathcal H}(x)\leqslant a\}$. 
For any regular value $a>0$ of ${\mathcal H}$, by (\ref{bb0.4}),
  $\mu^M$ does not vanish on $\partial M_a={\mathcal H}^{-1}(a)$,
the boundary of the compact $G$-manifold $M_a$.
According to Atiyah \cite[\S 1, \S 3]{A74}  and Paradan
\cite[\S 3]{Par01} (cf. also Vergne \cite{Ve96}), the triple $(M_a,
\mu^M, L)$ defines a transversally elliptic symbol
$$\sigma^{M_a}_{L,\mu}:=\pi^*\left(\sqrt{-1} c\left(\cdot+
{\mu^M}\right)\otimes \Id_L\right): \pi^*\left(\Lambda
(T^{*(0,1)}M_a)\otimes L\right)\longrightarrow \pi^*\left(\Lambda
(T^{*(0,1)}M_a)\otimes L\right) ,$$ where $c(\cdot)$ is the Clifford
action on $\Lambda (T^{*(0,1)}M)$
(cf. \eqref{a1.20}).\footnote{The symbol $\sigma_{L,\mu}^{M_{a}}$
is the (semi-classical) symbol of Tian-Zhang's \cite{TZ98}, \cite{TZ99}
deformed Dirac operator \eqref{1.26}
in their approach to the
Guillemin-Sternberg geometric quantization conjecture \cite{GuSt82}.
The associated symbol was used by Paradan \cite{Par01}, \cite{Par03}
in his approach to the same conjecture.}
 Let $\Ind(\sigma^{M_a}_{L, \mu})\in R[G]$ denote the corresponding
 transversal index in the sense of  Atiyah \cite[\S 1]{A74}.


\begin{thm}\label{t0.1} a) For $\gamma\in\Lambda^*_+ $, there exists
$a_\gamma\geqslant 0$ \footnote{In view of Theorem  \ref{t1.5}, we can take
$a_\gamma= \frac{c_\gamma}{4\pi^2}$ with $c_\gamma$
being defined in (\ref{1.23x}).}
such that $\Ind(\sigma^{M_a}_{L,\mu})_\gamma\in \Z$ 
does not depend on
the regular value $a>  a_\gamma$ of ${\mathcal H}$.

b) $\Ind(\sigma^{M_a}_{L,\mu})_{\gamma=0}\in \Z $ does
not depend on the regular value $a> 0$ of ${\mathcal H}$.
\end{thm}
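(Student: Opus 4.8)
The plan is to prove the independence of the $\gamma$-multiplicity of $\Ind(\sigma^{M_a}_{L,\mu})$ on the regular value $a$ via an excision/vanishing argument, following the circle of ideas of Atiyah and Paradan on transversally elliptic symbols combined with the Tian--Zhang analytic localization technique. Fix two regular values $a_1 < a_2$ of $\mathcal H$ with $a_\gamma < a_1$ (for part (a)) or $0 < a_1$ (for part (b)). The key geometric input is that on the annular region $M_{a_2}\setminus \mathring{M}_{a_1} = \mathcal H^{-1}([a_1,a_2])$ the deforming vector field $\mu^M$ is nowhere vanishing, so the symbol $\sigma_{L,\mu}$ is \emph{elliptic} (not merely transversally elliptic) over a neighborhood of this region. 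First I would invoke the multiplicativity/excision property of the transversal index: the difference $\Ind(\sigma^{M_{a_2}}_{L,\mu}) - \Ind(\sigma^{M_{a_1}}_{L,\mu})$ in $R[G]$ equals the transversal index of the symbol $\sigma_{L,\mu}$ restricted to the interior of the collar $\mathcal H^{-1}([a_1,a_2])$, suitably interpreted (one glues in a cylinder or uses the relative $K$-theory class supported where $\mu^M = 0$). Since $\mu^M \neq 0$ on the collar, this relative class is supported in $\emptyset$ inside that piece, so the difference is the transversal index of a symbol whose support (the zero set of $\mu^M$) avoids the collar entirely.

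The second step is to analyze this difference representation-theoretically and show that its $\gamma$-multiplicity vanishes. The zero set $\mu^{-1}(0) \cap$ (something) is irrelevant; what matters is that the support of $\sigma^{M_{a_2}}_{L,\mu} \setminus \sigma^{M_{a_1}}_{L,\mu}$, namely $\{x : \mu^M(x) = 0,\ a_1 \le \mathcal H(x) \le a_2\}$, decomposes as a disjoint union of pieces sitting over critical points of $\mathcal H = |\mu|^2$. By (\ref{bb0.4}), $\mu^M(x)=0$ exactly when $x$ is a critical point of $\mathcal H$, and the critical value $\mathcal H(x) = |\mu(x)|^2 = |\beta|^2$ where $\beta = \mu(x)$ lies in a finite set of ``relevant'' coadjoint orbits (the Kirwan--Ness stratification). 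The point (anticipating Theorem~\ref{t1.5} and the constant $c_\gamma$) is that on each such critical piece with $|\beta|^2 > a_\gamma$, Paradan's computation of the local transversal index contribution — a piece of the form $\Ind$ of a symbol on a normal bundle, induced up from the stabilizer of $\beta$ — contributes only $G$-representations $V^G_{\gamma'}$ with $\gamma'$ ``far from $\gamma$'' in a precise sense (the highest weights appearing are all of the form $\beta + (\text{positive combination})$, hence have norm bounded below by something depending on $|\beta|$, which exceeds $\gamma$ once $|\beta|^2 > a_\gamma$). For part (b), $\gamma = 0$, and one uses that any critical piece with $\mathcal H = |\beta|^2 > 0$ has $\beta \neq 0$, so its contribution involves only nontrivial representations of the relevant subgroup and the multiplicity of the trivial $G$-representation in the induced module is zero — this is the cleaner case and is why no positive lower bound $a_\gamma$ is needed.

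The third step is to make the ``multiplicity estimate on a normal bundle'' rigorous. Here I would use the model computation: near a critical orbit $G\cdot x_0$ with $\mu(x_0) = \beta$, one has an equivariant normal bundle $N$, the group $G_\beta = $ stabilizer of $\beta$ acts, and the local index is $\Ind^G_{G_\beta}$ of a transversally elliptic symbol on $N$ twisted by $L|_{x_0}$, which by the Atiyah--Bott--Segal--Singer type fixed point / localization formula is an explicit infinite sum $\sum_k (\cdots) e^{i k \beta}\otimes(\text{rep of }G_\beta)$. The decomposition of each such induced representation into $V^G_{\gamma'}$, via (a version of) the Borel--Weil and the Blattner/branching rules, shows every $\gamma'$ with nonzero multiplicity satisfies $\langle \gamma', \beta\rangle \ge |\beta|^2$, whence $|\gamma'| \ge |\beta|$, forcing $\gamma' \neq \gamma$ whenever $|\beta|^2 > a_\gamma := c_\gamma/(4\pi^2)$ with $c_\gamma$ chosen (as in (\ref{1.23x})) to dominate $|\gamma|^2$ and the finitely many comparisons. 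Summing over the finitely many critical pieces in $\mathcal H^{-1}([a_1,a_2])$ gives that the $\gamma$-component of the difference is $0$.

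I expect the main obstacle to be Step 1 — the precise formulation and justification of the excision property for \emph{transversally elliptic} symbols on manifolds with boundary, since Atiyah's transversal index is an a priori non-canonical element of $R[G]$ (an infinite sum) and the naive subtraction of indices of operators on $M_{a_1}$ and $M_{a_2}$ requires care about boundary conditions and the behavior of the symbol near $\partial M_{a_i}$. The clean way around this is to never work with the boundary directly: instead, following Tian--Zhang \cite{TZ98}, deform the (rescaled) Dirac operator $D^L - \frac{1}{T}c(\mu^M)$-type operator on the \emph{closed} pieces and use that for large deformation parameter the spectrum localizes near $\mu^M = 0$, so that the $\gamma$-part of the index is computed by a sum of local contributions over $\mu^{-1}(0)$ and the other critical orbits; the $a$-dependence then only enters through which critical orbits lie in $M_a$, and the estimate of Step 3 controls the extra ones. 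This analytic-localization route also supplies the explicit value $a_\gamma = c_\gamma/4\pi^2$ promised in the footnote, tying the proof to Theorem~\ref{t1.5}.
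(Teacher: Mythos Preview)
Your first paragraph contains an error: you assert that on the annulus ${\mathcal H}^{-1}([a_1,a_2])$ the vector field $\mu^M$ is nowhere vanishing, but by \eqref{bb0.4} the zeros of $\mu^M$ are precisely the critical points of ${\mathcal H}$, which can certainly lie in the interior of the annulus (only the boundary values $a_1,a_2$ are assumed regular). You tacitly correct this in the second paragraph by invoking the critical-orbit decomposition, but the conclusion of the first paragraph---that the symbol is elliptic on the annulus and that the relative class has empty support there---is false, and with it the suggestion that excision alone already forces vanishing.

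Setting that aside, what you outline in Steps 2--3 is essentially Paradan's localization strategy \cite{Par01,Par03}: decompose the transversal index on $M_{a,a'}$ as a sum over components of $\{\mu^M=0\}$ and argue, via a normal-bundle model and branching rules, that the contribution at a critical orbit through $\beta$ involves only $V^G_{\gamma'}$ with $|\gamma'|\ge|\beta|$. This is a valid alternative, but it is not what the paper does. The paper's route is purely analytic: by additivity \eqref{1.24} the difference of indices equals $\Ind(\sigma^{M_{a,a'}}_{L,\mu})_\gamma$; by Theorem~\ref{t1.3} this equals the APS-type index $Q_{APS}^{M_{a,a'}}(L,\mu)_\gamma$; and Theorem~\ref{t1.5} proves the latter is zero by showing the deformed Dirac operator with APS boundary condition is itself invertible on the $\gamma$-component for large $T$. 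The mechanism is the Bochner-type formula \eqref{1.27} for $(D^L_{M,T})^2$ combined with Lemma~\ref{t1.6}, which shows by a Cauchy--Schwarz argument that $(4\pi-\varepsilon){\mathcal H} - 2\sqrt{-1}\sum_i\mu_i L_{V_i}\ge 0$ on the $\gamma$-component as soon as ${\mathcal H} > c_\gamma/(4\pi^2)$; this is exactly where the bound $a_\gamma$ comes from, with no representation theory of normal bundles required. Your final paragraph gestures at this analytic route; that is in fact the paper's entire argument, and the Paradan-style computation you devote most of the proposal to is bypassed.
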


By Theorem \ref{t0.1}, for $\gamma\in\Lambda^*_+ $, 
 we can associate an integer $Q(L)_\gamma $ that is equal to 
$\Ind(\sigma^{M_a}_{L,\mu})_\gamma$ for
 large enough regular value $a> 0$ of ${\mathcal H}$. 

We can now state the  main result of this paper.
\begin{thm}\label{t0.2} For  $\gamma\in\Lambda^*_+ $, the
following identity holds:
\begin{align}\label{0.5}
Q(L)_\gamma =Q(L_\gamma).
\end{align}
\end{thm}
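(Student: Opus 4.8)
The plan is to reduce the statement $Q(L)_\gamma = Q(L_\gamma)$ to the compact Guillemin--Sternberg conjecture (now a theorem, due to Meinrenken--Sjamaar, Tian--Zhang, Paradan) by a localization/cutting argument, since by Theorem \ref{t0.1} the quantity $Q(L)_\gamma = \Ind(\sigma^{M_a}_{L,\mu})_\gamma$ stabilizes for large regular values $a$. The first step is to understand the $\gamma$-multiplicity of the transversal index $\Ind(\sigma^{M_a}_{L,\mu})$ geometrically. One knows (Atiyah, Paradan) that the transversal index of the symbol built from $\mu^M$ on $M_a$ localizes, as an element of $R[G]$, to contributions of the critical set of $\mathcal{H}=|\mu|^2$ inside $M_a$. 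By \eqref{bb0.4} the critical set of $\mathcal{H}$ is where $\mu^M$ vanishes, and the components of this set are labelled (after intersecting with $\mu^{-1}$ of coadjoint orbits) in the familiar Kirwan--Ness stratification fashion. The key point is to isolate, among these contributions, exactly the piece that carries the representation $V^G_\gamma$: this is the component of $\{\mu^M=0\}$ lying over $G\cdot\gamma$, whose reduction is precisely $M_\gamma$.

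Concretely, I would proceed as follows. First, choose $a > a_\gamma$ regular, so $M_a$ is a compact $G$-manifold with boundary on which $\mu^M \neq 0$, and the transversal index $\Ind(\sigma^{M_a}_{L,\mu}) \in R[G]$ is defined. Second, apply the excision/additivity property of transversal indices: decompose $M_a$ into a neighborhood $\mathcal{U}$ of the zero set $\{\mu^M = 0\} \cap M_a$ (which is compact, again using properness of $\mu$) and its complement, where the symbol is genuinely elliptic in the $G$-direction and contributes nothing to the $\gamma$-multiplicity beyond the finitely many relevant strata. Third, on the component $Z_\gamma$ of $\{\mu^M=0\}$ sitting over $G\cdot\gamma$ — equivalently $\mu^{-1}(G\cdot\gamma)$ with the shifted structure — identify the local transversal index contribution with the Spin$^c$-index on the reduced orbifold $M_\gamma$, i.e. show that picking out the $V^G_\gamma$-isotypical part of this local contribution yields exactly $Q(L_\gamma)$ as defined in \eqref{b0.4} (or its perturbed version when $\gamma$ is not a regular value of $\mu$). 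Fourth, show that all other components $Z_{\gamma'}$, $\gamma' \neq \gamma$, contribute $0$ to the $\gamma$-multiplicity; this uses the fact (as in Paradan's analysis) that the local model near $Z_{\gamma'}$ produces representations whose weights are shifted away from $\gamma$, so $\langle \cdot \rangle_\gamma$ vanishes. Combining, $Q(L)_\gamma = \Ind(\sigma^{M_a}_{L,\mu})_\gamma = Q(L_\gamma)$.

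The main obstacle is the analytic identification in the third step: matching the local transversal index near $Z_\gamma$ with the honest index $Q(L_\gamma)$ on the compact reduced space. One natural route is the analytic approach of Tian--Zhang \cite{TZ98}, deforming the Dirac operator on $M_a$ by $\sqrt{-1}c(\mu^M)$ scaled by a large parameter and performing a Witten-type localization: as the parameter grows, the kernel concentrates near $\{\mu^M=0\}$, and over $Z_\gamma$ one recovers, after taking the $\gamma$-isotypical component and passing to the $G$-quotient, the model Dirac operator on $M_\gamma$ twisted by $L_\gamma$. Controlling the estimates uniformly near $\partial M_a$ — so that the boundary plays no role, which is exactly what the hypothesis "$a$ regular, hence $\mu^M \neq 0$ on $\partial M_a$" buys us — and handling the orbifold singularities and the non-regular-value case $\gamma \notin \mu(M)^{\mathrm{reg}}$ via a further small perturbation (as in \cite{MS99}, \cite[\S 7.4]{Par01}) are the technical heart of the argument. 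A consistency check: when $M$ is already compact and $\mu$ proper is automatic, this reduces to the classical $[Q,R]=0$ theorem, as it must.
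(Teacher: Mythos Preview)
Your proposal has a genuine gap at its central step. You claim there is a ``component $Z_\gamma$ of $\{\mu^M=0\}$ sitting over $G\cdot\gamma$,'' and you propose to identify its local contribution with $Q(L_\gamma)$. But for $\gamma\neq 0$ this component does not exist. The critical set of $\mathcal H=|\mu|^2$ is $\{x:\mu(x)^M(x)=0\}$, i.e.\ the set of points where $\mu(x)$ lies in the Lie algebra of the stabilizer $G_x$. Its minimum stratum is $\mu^{-1}(0)$; the higher Kirwan--Ness strata are of the form $G\cdot\bigl(M^\beta\cap\mu^{-1}(\beta)\bigr)$ for special $\beta\in\kg$. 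For a generic weight $\gamma$ (e.g.\ a regular value of $\mu$ with free action on $\mu^{-1}(\gamma)$), the fiber $\mu^{-1}(G\cdot\gamma)$ misses $\{\mu^M=0\}$ entirely. So the Witten/Tian--Zhang deformation $D^L+\sqrt{-1}Tc(\mu^M)$ localizes every isotypic component to the same critical set, and there is no stratum that hands you $M_\gamma$ directly. Your step~4 then amounts to asserting that the higher-stratum contributions have no $\gamma$-part, which is essentially the theorem you are trying to prove, not an independent input.

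The paper's route is precisely designed to manufacture the missing stratum: it uses the shifting trick. One takes $N=\mO_\gamma$, $F=(L^\gamma)^*$ (so that, by Borel--Weil--Bott, $Q(F)_{\nu,*}=\delta_{\nu,\gamma}$), and then the reduction of $M\times\mO_\gamma$ at $0$ is exactly $M_\gamma$ with line bundle $L_\gamma$. Theorem~\ref{t0.4} applied to this product gives
\[
Q(L_\gamma)=Q\bigl((L\otimes F)_{\gamma=0}\bigr)=\sum_{\nu}Q(L)_\nu\cdot Q(F)_{\nu,*}=Q(L)_\gamma.
\]
The work is therefore displaced into proving Theorem~\ref{t0.4}. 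The easy half is \eqref{0.13}, the $\gamma=0$ case on the product (this is the Tian--Zhang/Paradan result for manifolds with boundary). The hard half is the multiplicativity \eqref{0.14}, which the paper reduces to the vanishing \eqref{0.103} of a transversal index on the shell $\mathcal M_{a,b}$ between $M_a\times N$ and $(M\times N)_b$. The point (emphasized in Remark~\ref{t0.4a}) is that the induced vector field $\theta^{M\times N}$ is not a sum of vector fields lifted from $M$ and $N$, so one cannot simply separate variables; instead the paper identifies the transversal index with an APS-type index (Theorem~\ref{t1.3}) and constructs a specific deformation $\psi_A$ of $\theta$ for which the deformed Dirac operator is shown to be invertible on the $G$-invariant part (Theorem~\ref{ta2.3}). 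If you want to salvage your outline, the fix is to insert the shifting trick at the outset and then recognize that your ``step~4 vanishing'' on the product is exactly \eqref{0.103}.
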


\begin{rem}\label{t0.4a}
When $M$ is compact,
 Theorem \ref{t0.2} is the Guillemin-Sternberg geometric quantization
conjecture \cite{GuSt82} which was first proved 
by Meinrenken \cite{Mein95} and Vergne \cite{Ve96} in the case where
 $G$ is abelian, and by Meinrenken \cite{Mein98}
and Meinrenken-Sjamaar   \cite{MS99} in the general case. We refer
to \cite{Ve02} for a survey on the Guillemin-Sternberg geometric
quantization conjecture.

If $M$ is noncompact but the zero set of $X^{\mathcal H}$ is
compact, then Theorem \ref{t0.1} is already contained in \cite{Par03}
and \cite{Ve07}, while Theorem \ref{t0.2} was conjectured  by
  Mich{\`e}le Vergne in her ICM 2006 plenary lecture \cite[\S 4.3]{Ve07}.
Special cases of this conjecture, related to the discrete series
of semi-simple Lie groups, have been proved by
Paradan \cite{Par03}, \cite{Par08}.

Theorem \ref{t0.2} provides a solution to Mich{\`e}le Vergne's
conjecture even when the zero set of $X^{\mathcal H}$ is noncompact.
\end{rem}

 Theorem \ref{t0.2} is a consequence of a more general
 result that we will  now describe. 

  Let $(N,\omega^N, J^N)$ be a compact symplectic manifold with
compatible almost complex structure $J^N$.  
Let $(F,h^F,\nabla^F)$ be the prequantum line bundle over $N$ carrying
a Hermitian metric $h^F$ and a Hermitian connection $\nabla^F$
verifying $\frac{\sqrt{-1}}{ 2\pi}(\nabla^F)^2=\omega^N$. We
assume  that $G$ acts on $N$, $F$ as above. Let $\eta: N\to \kg^*$ be
the associated moment map.

Let $D_+^F: \Omega^{0,\text{even}}(N,F) \to
\Omega^{0,\text{odd}}(N,F)$ be the associated Spin$^c$ Dirac operator on $N$.  
Then as a virtual representation of $G$, we have
\begin{align}\label{b0.5}
\Ind\left(\sigma^N_{F,   \eta}\right)  
 = \Ind\left(D^F_+\right):=  \Ker \left(D^F_+\right)-
\text{Coker}\left(D^F_+\right) \in R(G).
\end{align}
For $\gamma\in \Lambda^*_+$, let $Q\left(F\right)_{\gamma,*}$ be 
 the multiplicity of the $G$-irreducible representation 
$(V^G_\gamma)^*$ in $\Ind\left(D^F_+\right) \in R(G)$.

 Let $L {\otimes} F$ be the prequantum line bundle over $M\times N$
obtained by the tensor product of the natural lifts of $L$ and
$F$ to $M\times N$.

\begin{thm}\label{t0.4} For the induced action of $G$ on
$(M\times N,\omega\oplus\omega^N)$ and  $L\otimes F$,
the following identity holds:
\begin{align}\label{0.7}
Q\left(\left(L {\otimes} F\right)_{\gamma=0}\right)
=\sum_{\gamma\in\Lambda^*_+ } Q(L )_\gamma\cdot
Q\left(F\right)_{\gamma,*}.
\end{align}
\end{thm}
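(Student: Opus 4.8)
The key observation is that the product $M \times N$ carries a Hamiltonian $G$-action with moment map $\mu \oplus \eta$, which is still proper since $N$ is compact (hence $(\mu \oplus \eta)^{-1}(B) \subset \mu^{-1}(B') \times N$ for a suitable compact $B'$, a product of compacts). Thus the Fundamental Assumption holds for $(M \times N, L \otimes F)$, and Theorem \ref{t0.2} gives $Q(L \otimes F)_{\gamma} = Q((L \otimes F)_\gamma)$ for every $\gamma \in \Lambda^*_+$; in particular for $\gamma = 0$ this identifies the left-hand side of \eqref{0.7} with $\Ind(\sigma^{M_a \times N}_{L \otimes F, \mu \oplus \eta})_{\gamma = 0}$ for large regular values $a$, via Theorem \ref{t0.1}b).

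\textbf{First} I would set up the transversal index on the product. Choosing the function $\mathcal{H}_{M \times N} = |\mu|^2$ (pulled back from $M$, i.e. ignoring the $N$-factor in the norm — or equivalently $|\mu \oplus \eta|^2$ after noting the two choices give the same answer for large $a$ by a homotopy argument on the taming vector field), one has $M_a \times N$ as the relevant compact manifold-with-boundary, and the taming vector field $(\mu \oplus \eta)^{M \times N}$ restricts on $\partial M_a \times N$ to $\mu^M \oplus (\cdot)$, which is nonvanishing there because $\mu^M$ is (by \eqref{bb0.4} and $a$ regular). So $\sigma^{M_a \times N}_{L \otimes F, \mu \oplus \eta}$ is a well-defined transversally elliptic symbol. \textbf{Next}, I would invoke the multiplicativity of the transversal index under products with a \emph{compact} manifold: for $W \in R[G]$ arising as $\Ind(\sigma^{M_a}_{L,\mu})$ and $V \in R(G)$ arising as $\Ind(D^F_+) = \Ind(\sigma^N_{F,\eta})$, one has
\begin{align}\label{plan:mult}
\Ind\bigl(\sigma^{M_a \times N}_{L \otimes F, \mu \oplus \eta}\bigr) = \Ind\bigl(\sigma^{M_a}_{L,\mu}\bigr) \cdot \Ind\bigl(D^F_+\bigr) \in R[G],
\end{align}
where the product is the (well-defined, since $V \in R(G)$ is finite-dimensional) tensor product of a formal representation with a genuine one. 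This is the external product in equivariant $K$-homology / the multiplicativity of Atiyah's transversal index \cite[\S 3]{A74}, combined with the fact that the symbol on the product is the external Clifford product of the two symbols (the taming vector field on the product being the sum of the pulled-back taming vector fields).

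\textbf{Then} the theorem follows by extracting the multiplicity of the trivial representation from \eqref{plan:mult}. Write $\Ind(\sigma^{M_a}_{L,\mu}) = \sum_{\gamma} \Ind(\sigma^{M_a}_{L,\mu})_\gamma \, V^G_\gamma$ and $\Ind(D^F_+) = \sum_{\delta} (\ldots) V^G_\delta$. For large regular $a$, Theorem \ref{t0.1}a) gives $\Ind(\sigma^{M_a}_{L,\mu})_\gamma = Q(L)_\gamma$, and the multiplicity of the trivial representation $V^G_0$ in $V^G_\gamma \otimes V^G_\delta$ is $\dim \Hom_G(V^G_0, V^G_\gamma \otimes V^G_\delta) = \dim \Hom_G((V^G_\gamma)^*, V^G_\delta)$, which is nonzero (and then equal to $1$) precisely when $V^G_\delta \cong (V^G_\gamma)^*$. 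Hence the trivial-representation multiplicity on the right of \eqref{plan:mult} is $\sum_\gamma Q(L)_\gamma \cdot Q(F)_{\gamma,*}$, while on the left it is $\Ind(\sigma^{M_a \times N}_{L \otimes F, \mu \oplus \eta})_{\gamma = 0} = Q(L \otimes F)_{\gamma = 0} = Q((L \otimes F)_{\gamma = 0})$ by Theorems \ref{t0.1}b) and \ref{t0.2}. This proves \eqref{0.7}.

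\textbf{The main obstacle} I anticipate is pinning down the multiplicativity statement \eqref{plan:mult} at the level of symbols rather than operators — one must verify that the external Clifford product of $\sigma^{M_a}_{L,\mu}$ with the elliptic symbol of $D^F_+$ is, up to a homotopy of transversally elliptic symbols (rescaling the $N$-direction taming term to zero, legitimate since that part is already elliptic and the $G$-action on $N$ causes no transversality issue once the $M$-factor is handled), equal to $\sigma^{M_a \times N}_{L \otimes F, \mu \oplus \eta}$; and then that Atiyah's transversal index respects this external product into $R(G) \subset R[G]$. This is standard multiplicativity of the analytic index \cite[\S 3]{A74}, but the bookkeeping — compatibility of the $(0,1)$-form decomposition on $T(M \times N)$, of the Clifford actions, and of the moment maps — needs care. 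Everything else is formal once Theorems \ref{t0.1} and \ref{t0.2} are in hand, the properness of $\mu \oplus \eta$ being immediate from compactness of $N$.
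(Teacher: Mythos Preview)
Your proposal has a genuine gap at the step you call ``a homotopy argument on the taming vector field'' to replace $|\theta|^2$ by $|\mu|^2$ (equivalently, to pass from the domain $(M\times N)_b$ with taming map $\theta$ to the domain $M_a\times N$ with taming map $\mu$). This is \emph{not} a routine homotopy. The two domains are different, and by additivity the discrepancy is $\Ind(\sigma^{\mathcal{M}_{a,b}}_{L\otimes F,\Psi_{a,b}})_{\gamma=0}$ on the region $\mathcal{M}_{a,b}=\overline{(M\times N)_b\setminus(M_a\times N)}$, where $\Psi_{a,b}$ interpolates between $\mu$ on $\partial(M_a\times N)$ and $\theta$ on $\partial(M\times N)_b$. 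A homotopy-invariance argument would require $\Psi_{a,b}^{\mathcal{M}_{a,b}}$ to be nowhere zero on all of $\mathcal{M}_{a,b}$; as the paper states explicitly in the paragraph following \eqref{0.103}, there is no canonical way to arrange this in general---it succeeds only in the special situations of \cite{Par03}, \cite{Par08}. The vanishing \eqref{0.103} is precisely the main technical content of the paper: it is Theorem~\ref{t33.2}, whose proof occupies all of Section~\ref{s33}, requiring the construction of a specific deformation $\psi_A$ (Lemmas~\ref{t33.3}--\ref{t33.4}) and a delicate analytic-localization estimate (Theorem~\ref{ta2.3}, Lemma~\ref{t33.6}). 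Note also that $\theta^{M\times N}$ is not a product vector field: by \eqref{2.25} it equals $\theta^M+\theta^N$ with cross-terms such as $\eta^M$, so the symbol on $(M\times N)_b$ is not an external product in Atiyah's sense.

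There is also a circularity: you invoke Theorem~\ref{t0.2} for $(M\times N,L\otimes F)$, but in this paper Theorem~\ref{t0.2} is \emph{deduced from} Theorem~\ref{t0.4} (by taking $N=\mathcal{O}_\gamma$, $F=(L^\gamma)^*$; see the paragraph after \eqref{b0.6}). You only use the case $\gamma=0$, which is \eqref{0.13} and is established independently via \eqref{2.8} and \eqref{44.1}, so this part is repairable. But once you cite \eqref{0.13} instead, your outline coincides with the paper's: establish \eqref{0.13}, establish the multiplicativity \eqref{0.101} (this is your product formula, proved here as Lemma~\ref{t44.1}), and bridge them via the vanishing \eqref{0.103}. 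That bridging step is not bookkeeping; it is Section~\ref{s33}.
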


For $\gamma\in \Lambda^*_+$,  denote by
$\mO_\gamma= G\cdot \gamma$ the orbit of the coadjoint action
of $G$ on $\kg^*$. 
Let $L^\gamma$ be the canonical prequantum holomorphic line bundle 
on $\mO_\gamma$, such that the associated moment map is the inclusion
$\mO_\gamma\hookrightarrow \kg^*$.
 By the Borel-Weil-Bott theorem 
and the solution of the  Guillemin-Sternberg
geometric quantization conjecture 
for the compact manifold $\mO_{\nu_1}\times\mO_{\nu_2}$, one has
that $\Hom_G(V^G_{\nu_3}, V^G_{\nu_1}\otimes  V^G_{\nu_2})\neq 0$
if and only if $\nu_3 \in G\cdot \nu_1+ G\cdot \nu_2$.  In
particular, one has  $|\nu_1|\leqslant |\nu_3|+|\nu_2|$.
For $ \nu_1,\nu_2\in \Lambda^*_+$, set
\begin{align}\label{b0.6}\begin{split}
C^\gamma_{\nu_1,\nu_2}&= \dim \Hom_G(V^G_\gamma,
V^G_{\nu_1}\otimes  V^G_{\nu_2}).
\end{split}\end{align}

By taking $N, F$ to be $\mO_\gamma, (L^\gamma)^*$,
we recover Theorem \ref{t0.2} from Theorem \ref{t0.4}
by using the Borel-Weil-Bott theorem.

By applying  Theorems \ref{t0.2}, \ref{t0.4} to $M\times N\times
\mO_\gamma$, we get the following result which is trivial in the compact case.
\begin{cor}\label{t0.5a}
For any $\gamma\in \Lambda^*_+$, the following identity holds:
\begin{align}\label{b0.8}
Q\left(L {\otimes} F \right)_{\gamma}
=\sum_{\nu_1,\nu_2\in\Lambda^*_+ } C^\gamma_{\nu_1,\nu_2}
 Q\left(L \right)_{\nu_1}\cdot Q\left(F\right)_{\nu_2} ,
\end{align}
where there are only finitely many non-vanishing terms in the right-hand side.
\end{cor}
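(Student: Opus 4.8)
The plan is to apply Theorems \ref{t0.2} and \ref{t0.4} to the triple product $M\times N\times\mO_\gamma$, equipped with the diagonal $G$-action and the prequantum line bundle $L\otimes F\otimes(L^\gamma)^*$, and to evaluate the resulting quantization number at $\gamma=0$ in the two ways corresponding to the two natural groupings of the three factors. As a preliminary remark, since $N\times\mO_\gamma$ is compact its moment map is bounded, so $M\times N\times\mO_\gamma$ still satisfies the Fundamental Assumption and all the quantization numbers appearing below are well defined by Theorem \ref{t0.1}.

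For the first evaluation I would group $M\times N\times\mO_\gamma$ as the noncompact factor $(M\times N,\,L\otimes F)$ times the compact factor $(\mO_\gamma,\,(L^\gamma)^*)$. The shifting trick identifies the symplectic reduction of $M\times N\times\mO_\gamma$ at $0\in\kg^*$, together with its reduced prequantum line bundle, with the reduction of $M\times N$ at $\gamma$ — the $(L^\gamma)^*$-factor is absorbed and one is left precisely with $(L\otimes F)_\gamma$. Hence $Q((L\otimes F\otimes(L^\gamma)^*)_{\gamma=0})=Q((L\otimes F)_\gamma)=Q(L\otimes F)_\gamma$, where the last equality is Theorem \ref{t0.2} applied to $M\times N$. (Equivalently, one can deduce this from Theorem \ref{t0.4} together with the Borel--Weil--Bott computation $Q((L^\gamma)^*)_{\nu,*}=\delta_{\nu\gamma}$, i.e.\ $\Ind(D^{(L^\gamma)^*}_+)=(V^G_\gamma)^*$ in $R(G)$, which is the same fact used in the text above to recover Theorem \ref{t0.2} from Theorem \ref{t0.4}.)

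For the second evaluation I would group the same space as the noncompact factor $(M,\,L)$ times the compact factor $(N\times\mO_\gamma,\,F\otimes(L^\gamma)^*)$. By the product formula for the equivariant Spin$^c$ Dirac index on compact manifolds together with the above Borel--Weil--Bott identity, $\Ind(D^{F\otimes(L^\gamma)^*}_+)=\Ind(D^F_+)\otimes(V^G_\gamma)^*$ in $R(G)$. Writing $\Ind(D^F_+)=\sum_{\nu_2\in\Lambda^*_+}Q(F)_{\nu_2}\,V^G_{\nu_2}$ and using that the multiplicity of $(V^G_{\nu_1})^*$ in $V^G_{\nu_2}\otimes(V^G_\gamma)^*$ equals $\dim(V^G_{\nu_1}\otimes V^G_{\nu_2}\otimes(V^G_\gamma)^*)^G=C^\gamma_{\nu_1,\nu_2}$, one gets $Q(F\otimes(L^\gamma)^*)_{\nu_1,*}=\sum_{\nu_2\in\Lambda^*_+}C^\gamma_{\nu_1,\nu_2}\,Q(F)_{\nu_2}$. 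Theorem \ref{t0.4}, applied with $M$ as the noncompact factor and $N\times\mO_\gamma$ as the compact factor, then gives $Q((L\otimes F\otimes(L^\gamma)^*)_{\gamma=0})=\sum_{\nu_1\in\Lambda^*_+}Q(L)_{\nu_1}\,Q(F\otimes(L^\gamma)^*)_{\nu_1,*}=\sum_{\nu_1,\nu_2\in\Lambda^*_+}C^\gamma_{\nu_1,\nu_2}\,Q(L)_{\nu_1}\,Q(F)_{\nu_2}$. Comparing with the first evaluation proves \eqref{b0.8}.

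Finally, for the finiteness statement: since $N$ is compact, $\Ind(D^F_+)\in R(G)$ is a genuine finite virtual representation, so $Q(F)_{\nu_2}\neq 0$ for only finitely many $\nu_2\in\Lambda^*_+$; and for each such $\nu_2$, a nonzero coefficient $C^\gamma_{\nu_1,\nu_2}$ forces $\gamma\in G\cdot\nu_1+G\cdot\nu_2$, hence $|\nu_1|\leqslant|\gamma|+|\nu_2|$, leaving only finitely many admissible $\nu_1$. The steps most in need of care are the bookkeeping of the duals $(V^G_\nu)^*$ against $V^G_\nu$ in the various multiplicity spaces, the verification that the shifting trick passes to the level of prequantum line bundles, and the Borel--Weil--Bott computation of $\Ind(D^{(L^\gamma)^*}_+)$ — most transparently obtained by identifying $(\mO_\gamma,-\omega^{\mO_\gamma},-J^{\mO_\gamma})$ with $\mO_{\gamma^*}$ for $\gamma^*=-w_0\gamma$; beyond these there is no genuine obstacle, as the entire content of the statement sits in Theorems \ref{t0.2} and \ref{t0.4}.
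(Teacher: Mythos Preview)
Your proof is correct and follows exactly the approach indicated in the paper, which simply states that the corollary follows ``by applying Theorems \ref{t0.2}, \ref{t0.4} to $M\times N\times\mO_\gamma$'' without spelling out the details. Your two groupings of the triple product, the use of Borel--Weil--Bott for $(L^\gamma)^*$, and the finiteness argument via $|\nu_1|\leqslant|\gamma|+|\nu_2|$ are precisely the steps the paper has in mind.
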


We now explain briefly the main ideas of the proof of Theorems
\ref{t0.1} and \ref{t0.4}.

The first observation is that in the case when $\gamma=0$, 
 both Theorems
\ref{t0.1} and \ref{t0.2} are relatively easy to prove. 
On the other hand, in the case when $\gamma\neq 0$,
one needs to establish the more
general Theorem  \ref{t0.4}, in order to prove (\ref{0.5}).

In fact, it is relatively  easy to see  that (cf. (\ref{2.8}) and (\ref{44.1}))
\begin{align}\label{0.13}
 Q \left(L {\otimes} F\right)_{\gamma=0}
=Q\left(\left(L {\otimes} F\right)_{\gamma=0}\right).
\end{align}
Thus Theorem \ref{t0.4} is 
a consequence of \eqref{0.13} and the following identity,
\begin{align}\label{0.14}
Q \left(L {\otimes} F\right)_{\gamma=0} =\sum_{\gamma\in\Lambda^*_+} 
Q(L)_\gamma\cdot Q\left(F\right)_{\gamma,*}.
\end{align}

Assume that $M$ is compact. Then \eqref{0.14} is trivial and this is
why one only needs to prove \eqref{0.5} for $\gamma=0$, in order to
establish \eqref{0.5}.  

However, if $M$ is noncompact, 
although  the geometric data on $M\times N$
have product structure, and the associated moment map
is $\theta(x,y)=\mu(x)+\eta(y)$, the vector field $\theta^{M\times
  N}$ on $M\times N$ induced by $\theta$ is not a sum of two  vector
fields lifted from  $M$ and $N$ (cf. \eqref{2.25}). Thus one cannot
compute directly $Q \left(L {\otimes} F\right)_{\gamma=0}$ as the
right hand side of  \eqref{0.14}.

To be more precise, let $a>0$ be a regular value of ${\mathcal H}$
so that $\mu^M$ does not vanish on $\partial M_a$.  
By the multiplicativity of the transversal index,
\begin{align}\label{0.101}
 \sum_{\gamma\in\Lambda^*_+ } 
\Ind(\sigma_{L,\mu}^{M_a})_{\gamma}\cdot Q\left(F\right)_{\gamma,*} 
= \Ind\left(\sigma_{L\otimes F,\mu}^{M_a\times N}\right)_{\gamma=0}.
\end{align}
Let $b>0$  be a regular value of ${\mathcal
H}'=|\theta|^2$. Then $\theta^{M\times N}\in T(M\times N) $ 
does not vanish on the boundary $\partial (M\times N)_b$ of 
$(M\times N)_b=\{(x,y)\in M\times N,\,|\theta(x,y)|^2\leqslant b\}$.
By Theorem \ref{t0.1}b), we have
\begin{align}\label{0.102}
 Q \left(L {\otimes} F\right)_{\gamma=0}
=\Ind\left(\sigma_{L\otimes
F,\theta}^{(M\times N)_b}\right)_{\gamma=0}.
\end{align}

We take $b>0$ large enough so that
$M_a\times N\subset (M\times N)_b$ and that
$(\partial(M\times N)_b)\cap( \partial
(M_a\times N))=\emptyset$. Denote by ${\mathcal M}_{a,b}$ the
closure of $(M\times N)_b\setminus M_a\times N$. Then ${\mathcal
M}_{a,b}$ is a manifold with boundary $\partial {\mathcal
M}_{a,b}=(\partial(M\times N)_b)\cup (\partial (M_a\times N))$.

Let $\Psi_{a,b}:{\mathcal M}_{a,b}\rightarrow \kg$ be a
$G$-equivariant map such that $\Psi_{a,b}|_{\partial (M_a\times
N)}=\mu $, while $\Psi_{a,b}|_{\partial (M\times N)_b}=\theta $. 
From the additivity of the transversal index, we get
\begin{align}\label{0.17}
  \Ind\left(\sigma_{L\otimes
F,\Psi_{a,b}}^{{\mathcal M}_{a,b}}\right)_{\gamma=0}=
\Ind\left(\sigma_{L\otimes F,\theta}^{(M\times N)_b}\right)_{\gamma=0}
- \Ind\left(\sigma_{L\otimes F,\mu}^{M_a\times N}\right)_{\gamma=0}.
\end{align}
We infer from (\ref{0.14})-(\ref{0.17}) that Theorem \ref{t0.4}
is equivalent to
\begin{align}\label{0.103}
  \Ind\left(\sigma_{L\otimes
F,\Psi_{a,b}}^{{\mathcal M}_{a,b}}\right)_{\gamma=0}=0.
\end{align}

Let $a_1>0$ be another large enough regular value of ${\mathcal H}$.
By the additivity and the homotopy invariance of the transversal index,
we have,
\begin{multline}\label{0.104}
  \Ind\left(\sigma_{L\otimes
F,\Psi_{a,b}}^{{\mathcal M}_{a,b}}\right)_{\gamma=0}
- \Ind\left(\sigma_{L\otimes F,\Psi_{a_1,b}}^{{\mathcal
M}_{a_1,b}}\right)_{\gamma=0}\\
=  \Ind\left(\sigma_{L\otimes
F,\mu}^{{\mathcal M}_{a_{1}}\times N}\right)_{\gamma=0}
- \Ind\left(\sigma_{L\otimes F,\mu}^{{\mathcal
M}_{a}\times N}\right)_{\gamma=0}.
\end{multline}
By (\ref{0.101}), (\ref{0.104}), and by taking $N, F$ to be
$\mO_\gamma, (L^\gamma)^*$ for $\gamma\in\Lambda_+^*$, we find
that Theorem \ref{t0.1}a) is a consequence of the vanishing result
(\ref{0.103}).

 Note that in the situations considered in \cite{Par03},
 \cite{Par08}, for 
$a, b>0$ large enough, one is able to find
 $\Psi_{a,b}:{\mathcal M}_{a,b}\rightarrow \kg$ such that 
$\Psi_{a,b}^{{\mathcal M}_{a,b}}\in T{\mathcal M}_{a,b}$ 
does not vanish on ${\mathcal M}_{a,b}$.
 From this, (\ref{0.103}) follows tautologically.
However, there is no canonical way to construct $\Psi_{a,b}$ such that 
$\Psi_{a,b}^{{\mathcal M}_{a,b}}\in T{\mathcal M}_{a,b}$ 
does not vanish on ${\mathcal M}_{a,b}$ 
in the general situation considered here.

 Our proof of (\ref{0.103}) consists of two steps. In a first step, we
express the transversal index
as an Atiyah-Patodi-Singer (APS) type index on corresponding  manifolds with
 boundary. Then in a second step, we construct a
 specific deformation  map $\Psi_{a,b}$, when $a,\, b>0$ are large enough,
so that we can apply the analytic localization techniques
developed in \cite{BL91}, \cite{TZ98} and \cite{TZ99} to the current problem. 
This allows us to show that the 
APS type index corresponding to the left-hand
 side of (\ref{0.103}) vanishes\footnote{In fact, the corresponding vanishing
result for the APS index, in the case of $N={\rm point}$ and
$\eta=0$, has already been proved in \cite[Theorems 2.6,
4.3]{TZ99}}. 

This paper is organized as follows. In Section \ref{s1}, 
we express the transversal index as an 
APS type index.
 In Section \ref{s1.6}, we establish
Theorem \ref{t0.1}, 
by applying the identification  of the transversal
index to an APS index that was established in Section \ref{s1}, 
 as well as the analytic localization
techniques developed in
\cite{BL91}, \cite{TZ98} and \cite{TZ99}. In Section \ref{s33}, we
present our proof of (\ref{0.103}). Finally, in Section \ref{s4}, we
provide details of the proofs  of (\ref{0.13}) and (\ref{0.101}),
thus completing the proof of Theorem \ref{t0.4}.
We explain also the compatibility of  quantization and its
restriction to a subgroup.

The results contained in this paper have been announced 
in \cite{MZ09a} (cf. also \cite[\S 4]{Ma10}).


\subsection{Notation}

In the whole paper, $G$ is a compact connected Lie group 
with Lie algebra $\kg$.
Let ${\rm Ad}_G(g)$ denote the adjoint action of $g\in G$ on $\kg$.
We equip $\kg$ with an ${\rm Ad}_G$-invariant scalar product,
and we identify $\kg$ and $\kg^*$ by this scalar product.
Let $V_1,\cdots, V_{\dim G}$ be an orthonormal basis of $\kg$.

 If a Hilbert space $H$ is a $G$-unitary
representation space,  by the Peter-Weyl theorem, one has the
orthogonal decomposition of Hilbert spaces
\begin{align}\label{a1.111}
H  =\bigoplus_{\gamma\in\Lambda_+^*} H^\gamma, \quad \text{ with  }
H^\gamma = \Hom_G (V^G_\gamma, H)\otimes V^G_\gamma.
\end{align}
We will call $H^\gamma$ the $\gamma$-component of $H$.
Moreover, if $W\subset H$ is a $G$-invariant linear subspace, for
 $\gamma\in\Lambda_+^*$, we denote by
\begin{align}\label{a1.1111}
W^\gamma  =W\cap H^\gamma
\end{align} 
and call it the $\gamma$-component of $W$.
If $D: \Dom(D)\subset H\to H$ is a $G$-equivariant linear operator, 
where $\Dom(D)$ is a dense $G$-invariant subspace of $H$, 
for $\gamma\in \Lambda^*_+$, we denote by 
$D(\gamma)$ the restriction of $D$ to $\Dom(D)^\gamma$
 which is dense in $H^\gamma$.

If $G$ acts on the left on a manifold ${\bf M}$, 
for $K\in \kg$,  we denote by
 $K^{\bf M}(x)= \frac{\partial}{\partial t}e ^{tK}x|_{t=0}$\index{$K^{\bf M}$}
 the corresponding vector field on ${\bf M}$.
 
For any $\Phi\in \cC^\infty({\bf M},\kg)$, we denote 
 $\Phi_i$, $1\leqslant i\leqslant \dim G$, 
the smooth functions on ${\bf M}$ defined by
\begin{align}\label{1.11}
 \Phi(x)=\sum_{i=1}^{\dim G}\Phi_i(x) V_i\  \ \ {\rm for}\ \  x\in {\bf M}.
\end{align}
Let $\Phi^{\bf M}$ denote the
vector field over ${\bf M}$ such that for any $x\in {\bf M}$,
\begin{align}\label{1.2z}
\Phi^{\bf M}(x)=(\Phi(x))^{\bf M}(x) = \sum_{i=1}^{\dim G}\Phi_i(x) 
V_i^{\bf M}(x),
\end{align}
where $(\Phi(x))^{\bf M}$ is the vector field
over ${\bf M}$ generated by $\Phi(x)\in\kg$.

Finally, when a subscript index  appears two times in a formula,  we
sum up with this index unless other notification is given.

$\ $

\noindent{\bf Acknowledgments}. We would like to thank Professor
Jean-Michel Bismut for many helpful discussions, as well as for kindly 
helping us to revise an ealier version of our manuscript.
X. M. thanks Institut Universitaire de France for support. The
work of  W. Z. was partially supported by MOEC and NNSFC. Part of
the paper was written while W. Z.  was visiting the School of
Mathematics of Fudan University during November and December of
2008. He would like to thank Professor Jiaxing Hong and other
members of the School for hospitality. 
We are also indebted to George Marinescu for his critical comments.
Last but not least, we would like to thank the referees of this paper 
for their critical reading and very helpful comments and suggestions.

\section{Transversal index and APS index}\label{s1}

In this section we 
express the transversal index as an 
Atiyah-Patodi-Singer\footnote{ In the sequel, 
Atiyah-Patodi-Singer will be abbreviated to APS.
} type index which have been studied in \cite{TZ99}
for $\gamma=0$ component.


This section is organized as follows. In Section \ref{s1.1}, we
recall the definition of the transversal index in the sense of
Atiyah \cite{A74} 
for manifolds with boundary. In Section \ref{s1.2}, we consider 
instead an index problem on a manifold with boundary 
for a Dirac operator with APS boundary conditions.
In Section \ref{s1.3}, we prove
the corresponding Dirac operator on the boundary is invertible.
 This guarantees that the APS index of the Dirac operator 
is invariant under deformation.
In Section \ref{s1.5}, we show that the transversal index can be 
identified with the APS index using a result by Braverman \cite{Brav02}.


We use the same notation as in the Introduction.

\subsection{Transversal index }\label{s1.1}

Let $M$ be an even dimensional  compact oriented Spin$^c$-manifold
with non-empty boundary $\partial M$, and $\dim M=n$.  
In the following, the boundary $\partial M$ carries the induced 
orientation. Let $g^{TM}$ be a Riemannian metric 
on the tangent vector bundle $\pi:TM\rightarrow M$.
Let $E$ be a complex vector bundle over $M$.



 We assume that the compact connected Lie group $G$ acts  
isometrically  on the left on $M$,  
and that this action lifts to an action of $G$ on
the Spin$^{c}$-principal bundle of $TM$ and on $E$.
Then the $G$-action also preserves $\partial M$.


We identify  $TM$ and $T^*M$ 
by the $G$-invariant metric $g^{TM}$.
Following \cite[p. 7]{A74} (cf. \cite[\S 3]{Par01}), set
\begin{align}\label{1.1}
T_G M=\left\{(x,v)\in T_xM: x\in M \text{ and }
\left\langle v, K^M(x)\right\rangle=0\,  {\rm for\ all}  \  K\in\kg\right\}.
\end{align}
Let $S(TM)=S_+(TM)\oplus S_-(TM)$ be the vector bundle of spinors
associated with the 
spin$^c$-structure on $TM$ and $g^{TM}$. 
For any $V\in TM$, the Clifford action $c(V)$ exchanges $S_\pm(TM)$.

 Let $\Psi:
M\rightarrow \kg$ be a $G$-equivariant smooth map. 
Assume that   $\Psi^M$ does not vanish on $\partial M$,
i.e., for any $x\in \partial M$, $\Psi^{M}(x)\neq 0$.

Let $\sigma_{E, \Psi}^M\in {\rm Hom}(\pi^*(S_+(TM)\otimes
E),\pi^*(S_-(TM)\otimes E))$ be the symbol 
\begin{align}\label{1.2}
\sigma_{E, \Psi}^M (x, v)=\left. \pi^*\left(\sqrt{-1}c(v + \Psi^M)
\otimes{\rm Id}_E \right)\right|_{(x,v)} \quad \text{ for } x \in
M,\  v\in T_{x}M.
\end{align}

Since $\Psi^M$ does not vanish on $\partial M$, the set
$\{(x,v)\in T_GM: \sigma_{E, \Psi}^M(x,v)\ \mbox{is
non-invertible}\}$ is a compact subset of
$T_G{\widehat M}$ (where $\widehat{M}=M\setminus \partial M$ is
the interior of $M$),
so that $\sigma_{E, \Psi}^M$ is a
$G$-transversally elliptic symbol on $T_G{\widehat M}$ in the
sense of Atiyah \cite[\S 1, \S 3]{A74} and Paradan \cite[\S
3]{Par01}, \cite[\S 3]{Par03}.
The associated transversal index can be written in the form
\begin{align}\label{1.3}
\Ind\left(\sigma_{E, \Psi}^M\right)
=\bigoplus_{\gamma\in \Lambda^*_+}\,
\Ind \left(\sigma_{E, \Psi}^M\right)_{\gamma} \cdot V^G_\gamma\in R[G],
\end{align}
with each $\Ind \left(\sigma_{E, \Psi}^M\right)_{\gamma}\in \Z$. Moreover, 
$\Ind \left(\sigma_{E, \Psi}^M\right)$ only depends on 
the homotopy class of $\Psi$  
as long as $\Psi^M$ does not vanish on $\partial M$,
but not on $g^{TM}$. 
Note that the number of $\gamma\in
\Lambda^*_+$ such that $\Ind(\sigma_{E, \Psi}^M)_\gamma\neq 0$ could
be infinite.

\comment{Note that the number of $\gamma\in
\Lambda^*_+$ such that $\Ind_\gamma(\sigma_{E, \Psi}^M)\neq 0$ could
be infinite.}

\subsection{The Atiyah-Patodi-Singer (APS) index}\label{s1.2}

 We make the same assumptions and use the same notation as
 in Section \ref{s1.1}.

Let $h^E$ be a $G$-invariant Hermitian metric on $E$, $\nabla^E$ a
$G$-invariant Hermitian connection on $E$ with respect to $h^E$. Let
$h^{S(TM)}$ be the $G$-invariant Hermitian metric on $S(TM)$ induced
by $g^{TM}$ and a $G$-invariant metric  on the line bundle defining
the spin$^c$ structure (cf. \cite[Appendix D]{LaMi89}).
Let $h^{S(TM)\otimes E}$ be the metric on
$S(TM)\otimes E$ induced by the metrics on $S(TM)$ and   $E$.

Let $\nabla^{S(TM)}$ be the Clifford connection on $S(TM)$
 induced by the Levi-Civita connection $\nabla^{TM}$ of
$g^{TM}$ and a $G$-invariant Hermitian connection on the line bundle
defining  the spin$^c$ structure (cf. \cite[Appendix D]{LaMi89}).
Let $\nabla^{S(TM)\otimes E}$ be the  Hermitian
connection on $S(TM)\otimes E$  
induced by $\nabla^{S(TM)}$ and $\nabla^E$.

Let $dv_{M}$ denote  the Riemannian volume form on $(M, g^{TM})$.
For $s\in \cC^\infty (M, S(TM)\otimes E)$, 
its $L^2$-norm $\|s\|_{0}$
is defined by
\begin{align}\label{a1.4}
    \|s\|_{0}^2 = \int_{M} |s(x)|^2 dv_{M}(x).
    \end{align}
Let $\left\langle  \cdot,\cdot\right\rangle$ denote the Hermitian
product on $\cC^\infty (M, S(TM)\otimes E)$ corresponding to
$\|\cdot\|_{0}^2$, and let $L^2(M, S(TM)\otimes E)$ be 
the space of $L^{2}$-sections of $S(TM)\otimes E$ on $M$.

Let $D^{E}_{M}$ be the Spin$^c$-Dirac operator 
defined by (cf. \cite[Appendix D]{LaMi89})
\begin{align}\label{1.4}
D^{E}_{M}=\sum_{i=1}^{n} c(e_i)\nabla^{S(TM)\otimes E}_{e_i}:
\cC^\infty (M, S(TM)\otimes E)\rightarrow \cC^\infty (M, S(TM)\otimes E),
\end{align}
where $\{e_i\}$ is an oriented orthonormal frame of $TM$.


Let $\varepsilon>0$ be less than the
injectivity radius of $g^{TM}$. We use the inward geodesic flow to
identify a neighborhood of the boundary $\partial M$ with  the
collar $\partial M\times[0,\varepsilon]$, and we identify $\partial
M\times \{0\}$ to the boundary $\partial M$.

Let $e_{n}$ be the inward unit normal vector field
perpendicular to $\partial M$.
 Let $e_1,\cdots,e_{n-1}$ be an
oriented orthonormal frame of $T\partial M$ so that
$e_1,\cdots,e_{n -1},$ $e_{n  }$ is an oriented orthonormal
frame of $TM|_{\partial M}$.
By using parallel transport with respect to $\nabla^{TM}$ along the
unit speed geodesics perpendicular to $\partial M$, $e_1,\cdots,
e_{n  }$ give rise to an oriented orthonormal frame of $TM$ over
$\partial M\times[0,\varepsilon]$.

The operator $D^{E}_{M}$ induces a Dirac operator on $\partial M$,
$D^E_{\partial M}:\cC^\infty (\partial M, (S(TM)\otimes E)|_{\partial
M})\rightarrow \cC^\infty (\partial M, (S(TM)\otimes E)|_{\partial M})$
defined by (cf. \cite[p. 142]{Gilkey93})
\begin{align}\label{1.5}
D^E_{\partial M}=-\sum_{i=1}^{n-1}c\left(e_{n}\right)
c\left(e_i\right)\nabla^{S(TM)\otimes E}_{e_i}
+\frac{1}{2}\sum_{i=1}^{n-1}\pi_{ii},
\end{align}
where
\begin{align}\label{1.6}
\pi_{ij}=\left.\left\langle\nabla^{TM}_{e_i}e_j,e_{n}
\right\rangle\right|_{\partial M},\ \ \ 1\leqslant i,\
j\leqslant n-1,
\end{align}
is the second fundamental form of the isometric embedding
$\imath_{\partial M}:\partial M\hookrightarrow M$. Let
$D^E_{\partial M,\pm}$ be the restrictions of $D^E_{\partial M}$ to
$\cC^\infty (\partial M, (S_\pm(TM)\otimes E)|_{\partial M})$.

As in (\ref{a1.4}), we define the Riemannian volume form
$dv_{\partial M}$ on $\partial M$, the Hermitian product
$\left\langle \cdot,\cdot\right\rangle_{\partial M, 0}$ and
the $L^2$-norm $\|\cdot\|_{\partial M, 0}$ on
$\cC^\infty (\partial M, (S(TM)\otimes E)|_{\partial M})$.

By \cite[Lemma 2.2]{Gilkey93}, $D^E_{\partial M}$ is a 
self-adjoint first order elliptic differential operator  
defined on $\partial M$. Moreover, the following
identity holds on $\partial M$:
 \comment{\begin{align}\label{a1.6}
  c\left(e_{n}\right)  D^E_{\partial M}\left(s|_{\partial
 M}\right)
= -D^E_{\partial M}\, c\left(e_{n}\right)\left(s|_{\partial
 M}\right),
 \end{align}
 from which one gets}
\begin{align}\label{a1.61}
     D^E_{\partial M,\pm}
= c\left(e_{n}\right)^{-1}\left(- D^E_{\partial M,\mp}\right)\,
c\left(e_{n}\right) .
 \end{align}
 Since the $G$-action preserves $\partial M$, the restriction of 
$\Psi^M$ to $\partial M$ is a section of $T\partial M$, i.e.,
\begin{align}\label{1.7}
\left.\Psi^M\right|_{\partial M}\in \cC^\infty (\partial M, T\partial M).
\end{align}

For $T\in{\R}$, set 
\begin{align}\label{1.8}  \begin{split}
D^E_{M,T}&=D^{E}_{M} +\sqrt{-1}T\, c\left(\Psi^M\right), \\
D^{E}_{M,\pm,T}&=
D^E_{M,T}|_{\cC^\infty  \left(M,  S_\pm(TM)\otimes E\right)},
\end{split}\end{align}
and
\begin{align}\label{1.9} \begin{split}
D^E_{\partial M, T}&=D^E_{\partial M }
- \sqrt{-1}T \, c\left(e_{n}\right)c\left(\Psi^M\right),\\
D^E_{\partial M,\pm,T}&
= D^E_{\partial M, T}|_{\cC^\infty  \left(\partial M,  \left(
S_\pm(TM)\otimes E\right)|_{\partial M}\right)}.
\end{split}\end{align}
Then $D^E_{M,T}$ exchanges the spaces associated with $S_\pm(TM)\otimes E$, 
and by (\ref{1.7}), $D^E_{\partial M, T}$ is 
self-adjoint and preserves
$\cC^\infty  \left(\partial M,  \left( S_\pm(TM)\otimes
E\right)|_{\partial M}\right)$.
Let ${\rm Spec}(D^E_{\partial M,\pm, T})$ be the spectrum of
$D^E_{\partial M,\pm, T}$.
For  $\lambda\in {\rm Spec}(D^E_{\partial M,\pm, T})$,
let $E_{\lambda,\pm,T}$ be the corresponding eigenspace.
Let $P_{\geqslant 0,\pm,T}$ (resp. $P_{>0,\pm,T}$) be
 the orthogonal projection from  
$L^{2} (\partial M,  ( S_\pm(TM)$ $\otimes E)|_{\partial M})$
onto $\oplus_{\lambda\geqslant 0}E_{\lambda,\pm,T}$ 
 (resp. $\oplus_{\lambda> 0}E_{\lambda,\pm,T}$).
We will call $P_{\geqslant 0,+,T}$
(resp. $P_{>0, - ,T}$) the APS projection associated with
$D^E_{\partial M,+, T}$ (resp. $ D^E_{\partial M,-, T}$).

For  $T\in\R $, let $(D^{E}_{M,+,T}, P_{\geqslant 0,+,T})$ (resp.
$(D^E_{M,-,T},P_{> 0,-,T})$) denote the corresponding operator with the 
the APS boundary condition \cite{APS}.
More precisely, the boundary condition of $D^{E}_{M,+,T}$ is
$P_{\geqslant 0,+,T} (s |_{\partial M})=0$ for
$s\in \cC^\infty  (M,  S_+(TM)$ $\otimes E)$
(resp. of $D^E_{M,-,T}$ is $P_{>0,-,T} (s |_{\partial M})=0$ for
$s\in \cC^\infty  (M,  S_-(TM)$ $\otimes E)$).

Both $(D^{E}_{M,+,T}, P_{\geqslant 0,+,T})$ and $(D^E_{M,-,T},P_{> 0,-,T})$
are elliptic,  and $(D^E_{M,-,T},P_{> 0,-,T})$ is the adjoint of
$(D^{E}_{M,+,T}, P_{\geqslant 0,+,T})$ (cf. \eqref{a1.61}, \cite[Theorem
2.3]{Gilkey93}). In particular, they are Fredholm operators  
and they commute with the $G$-action.

Let $Q_{APS,T}^M\left(E, \Psi \right)_{\gamma}\in \Z $, 
$\gamma\in\Lambda^*_+ $, be defined by
\begin{multline}\label{1.10}
 \bigoplus_{\gamma\in \Lambda^*_+}\,
 Q_{APS,T}^M\left(E, \Psi \right)_{\gamma}\cdot V^G_\gamma
 =   \Ind\left(D^{E}_{M,+,T}, P_{\geqslant 0,+,T}\right)\\
   :=\Ker\left(D^{E}_{M,+,T}, P_{\geqslant 0,+,T}\right)
 -\Ker\left(D^E_{M,-,T},P_{> 0,-,T}\right)\in R(G).
\end{multline}

\subsection{An invariance property of the APS   index}\label{s1.3}
\begin{prop}\label{t1.2} For $\gamma\in \Lambda^*_+$, there
exist $C_{\gamma}>0$, $T_\gamma\geqslant 0$ such that  
for $T> T_\gamma$, $s\in\cC^\infty
(\partial M, (S (TM)\otimes E )|_{\partial M} )^\gamma$, we have
\begin{align}\label{1.16c}
\left\|D^E_{\partial M, T}s\right\|^2_{\partial M,0}
\geqslant \left\|D^E_{\partial M}s\right\|^2_{\partial M,0}
+C_{\gamma} \, T^2 \|s\|^2_{\partial M,0},
\end{align}
in particular, $D^E_{\partial M, T}(\gamma)$ is invertible.
\end{prop}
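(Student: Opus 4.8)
The plan is to compute $\|D^E_{\partial M,T}s\|^2_{\partial M,0}$ by expanding the square and extracting the leading term in $T$. Writing $D^E_{\partial M,T} = D^E_{\partial M} - \sqrt{-1}\,T\,c(e_n)c(\Psi^M)$, one has
\begin{align}\label{1.16plan}
\left\|D^E_{\partial M,T}s\right\|^2_{\partial M,0}
= \left\|D^E_{\partial M}s\right\|^2_{\partial M,0}
+ T^2\left\|c(e_n)c(\Psi^M)s\right\|^2_{\partial M,0}
+ T\left\langle \mathcal{R}\,s,s\right\rangle_{\partial M,0},
\end{align}
where $\mathcal{R} = \sqrt{-1}\,\big[D^E_{\partial M},\,c(e_n)c(\Psi^M)\big]$ (using that $c(e_n)c(\Psi^M)$ is skew-adjoint, so the cross terms combine into this commutator). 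Since $c(e_n)c(\Psi^M)$ is skew-adjoint with square $-|\Psi^M|^2\,\mathrm{Id}$, the $T^2$-term equals $T^2\int_{\partial M}|\Psi^M|^2|s|^2\,dv_{\partial M}$. The commutator $\mathcal{R}$ is a \emph{zeroth-order} operator: the Clifford action $c(\Psi^M)$ and the second-fundamental-form term $\frac12\sum\pi_{ii}$ contribute only through differentiation of the coefficients of $\Psi^M$ and through curvature/torsion terms, so $\mathcal{R}$ is a bundle endomorphism, bounded on $\partial M$ by some constant $C_0$ depending only on the geometry and on $\Psi$.

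The key point is that the $T^2$-term is \emph{not} pointwise bounded below (it vanishes where $\Psi^M=0$, which can happen in the interior of $\partial M$'s tangent directions, although by hypothesis $\Psi^M\ne 0$ on $\partial M$ itself — so actually $|\Psi^M|^2\ge c_1>0$ on the compact boundary $\partial M$). Using $\Psi^M\ne 0$ on the compact set $\partial M$, set $c_1 = \min_{\partial M}|\Psi^M|^2>0$; then the $T^2$-term is at least $c_1 T^2\|s\|^2$, and combining with the bound $|T\langle\mathcal{R}s,s\rangle|\le C_0 T\|s\|^2$ we obtain
\begin{align}\label{1.16plan2}
\left\|D^E_{\partial M,T}s\right\|^2_{\partial M,0}
\geqslant \left\|D^E_{\partial M}s\right\|^2_{\partial M,0}
+ \big(c_1 T^2 - C_0 T\big)\|s\|^2_{\partial M,0}.
\end{align}
Here I have not yet used the $\gamma$-component hypothesis. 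In fact \eqref{1.16plan2} already gives the inequality \eqref{1.16c} with $C_\gamma = c_1/2$ once $T> T_\gamma := 2C_0/c_1$, uniformly in $\gamma$. The restriction to the $\gamma$-component in the statement is presumably a safety device: the operator $D^E_{\partial M,T}$ preserves each $H^\gamma$, so it suffices to prove invertibility component by component, and \eqref{1.16c} on each component immediately yields a lower bound $\|D^E_{\partial M,T}s\|\ge \sqrt{C_\gamma}\,T\|s\|$, hence injectivity; self-adjointness plus ellipticity (discrete spectrum) then gives invertibility of $D^E_{\partial M,T}(\gamma)$. I would actually expect that for this boundary operator one can take $C_\gamma$ and $T_\gamma$ independent of $\gamma$, but stating it $\gamma$-by-$\gamma$ is all that is needed downstream and avoids having to track uniformity.

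The main obstacle is the careful identification of the cross term with a bounded zeroth-order operator. One must expand $D^E_{\partial M}$ as in \eqref{1.5}, commute it past $c(e_n)c(\Psi^M)$, and check that all the resulting terms — those where the derivative lands on the connection coefficients of $S(TM)\otimes E$ versus those where it lands on the components $\Psi_i$ of $\Psi$ — are genuinely order zero, using the Clifford relations and $\nabla^{TM}c = 0$, $\nabla^{S(TM)\otimes E}(c(\cdot)) = c(\nabla^{TM}\cdot)$. This is a routine but somewhat delicate Bochner-type computation; the positivity input ($|\Psi^M|^2\ge c_1>0$ on $\partial M$) is exactly the hypothesis already imposed in Section \ref{s1.1}, so no new geometric assumption is needed. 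Finally, to pass from the a priori estimate \eqref{1.16c} to invertibility I would invoke that $(D^E_{\partial M,T}(\gamma))$ is self-adjoint with compact resolvent on $H^\gamma$ (elliptic on the compact manifold $\partial M$), so that a bound $\|D^E_{\partial M,T}(\gamma)s\|\ge \sqrt{C_\gamma}\,T\|s\|$ forces $0\notin\mathrm{Spec}$.
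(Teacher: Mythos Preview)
Your proposal contains a genuine gap: the claim that $\mathcal{R}$ is a zeroth-order bundle endomorphism is false. Since $D^E_{\partial M,T}$ is self-adjoint, the cross term equals $-\sqrt{-1}T\langle \{D^E_{\partial M},\, c(e_n)c(\Psi^M)\}s,s\rangle$, i.e.\ the \emph{anti}commutator, not the commutator. Using $c(e_n)c(e_i)c(e_n)=c(e_i)$ and $c(e_n)c(\Psi^M)c(e_n)=c(\Psi^M)$ (valid because $e_i,\Psi^M\perp e_n$), one finds that the first-order part of this anticommutator is
\[
\sum_{i=1}^{n-1}\big(c(e_i)c(\Psi^M)+c(\Psi^M)c(e_i)\big)\nabla_{e_i}
= -2\sum_{i=1}^{n-1}\langle e_i,\Psi^M\rangle\,\nabla_{e_i}
= -2\,\nabla^{S(TM)\otimes E}_{\Psi^M},
\]
which is genuinely first order. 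This is exactly the term $-2\sqrt{-1}T\,\nabla^{S(TM)\otimes E}_{\Psi^M}$ appearing in the paper's formula \eqref{1.13}. In particular the bound $|T\langle\mathcal{R}s,s\rangle|\le C_0T\|s\|^2$ fails, and the argument cannot be repaired by absorbing the first-order piece into $\|D^E_{\partial M}s\|^2$ without an a priori smallness assumption that is not available.

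The restriction to the $\gamma$-component is therefore not a ``safety device'' but the essential mechanism. One writes, via \eqref{1.914}--\eqref{1.14},
\[
\nabla^{S(TM)\otimes E}_{\Psi^M}=\sum_i \Psi_i\,L_{V_i}+\sum_i \Psi_i\,\mu^{S(TM)\otimes E}(V_i),
\]
where the second sum is a bounded endomorphism, and each Lie derivative $L_{V_i}$ acts boundedly on the $\gamma$-isotypic component $L^2(\partial M,(S(TM)\otimes E)|_{\partial M})^\gamma$ with operator norm depending on $\gamma$. This is how the paper obtains a bound of the form $C'_\gamma T\|s\|^2$ for the cross term, and this is also why $T_\gamma$ genuinely depends on $\gamma$; your expectation that one can take $C_\gamma,T_\gamma$ independent of $\gamma$ is incorrect.
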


\begin{proof}
From (\ref{1.5}), (\ref{1.7}) and (\ref{1.9}), we get
\begin{multline}\label{1.13}
\left(D^E_{\partial M, T}\right)^2=\left(D^E_{\partial M
}\right)^2-\sqrt{-1}T\, \sum_{i=1}^{n -1} \pi_{ii}\, \, 
c\left(e_{n}\right) c\left(\Psi^M\right)\\
+\sqrt{-1}T\,  \sum_{i=1}^{n-1} c\left(e_{n}\right)
c\left(e_i\right) \Big(\nabla^{S(TM)\otimes
E}_{e_i}\left(c\left(e_{n}\right)
c\left(\Psi^M\right)\right)\Big)\\
-2 \sqrt{-1}T\, \nabla^{S(TM)\otimes E}_{\Psi^M}
+T^2 \left|\Psi^M\right|^2.
\end{multline}

For any $K\in \kg$, let $L_{K}$ denote the
Lie derivative of $K$ acting on $\cC^\infty (M, S(TM)\otimes
E)$ and thus also on $\cC^\infty (\partial M, (S(TM)\otimes
E)|_{\partial M})$. Then
\begin{align}\label{1.914}
\mu^{S(TM)\otimes E}(K):=\nabla^{S(TM)\otimes E}_{K^M}-L_{K}
\in \cC^\infty (M, \End(S(TM)\otimes E)).
\end{align} 
By 
(\ref{1.11}) and (\ref{1.2z}), we have
\begin{align}\label{1.14}
\nabla^{S(TM)\otimes E}_{\Psi^M}=\sum_{i=1}^{\dim G}\Psi_iL_{V_i }
+\sum_{i=1}^{\dim G}\Psi_i\left(\nabla^{S(TM)\otimes
E}_{V_i^M}-L_{V_i}\right).
\end{align}
In view of  (\ref{a1.111}), it is clear that each $L_{V_i }$,
$1\leqslant i\leqslant \dim G$, acts as a bounded operator on 
$L^{2}(\partial M, (S (TM)\otimes E)|_{\partial M} )^\gamma$.

On the other hand, since $\Psi^M$ does not vanish on $\partial M$,
there exists $C>0$ such that
\begin{align}\label{1.15}
\left|\Psi^M\right|^2\geqslant 4 C\ \ \ {\rm on}\ \ \partial M.
\end{align}

We deduce from (\ref{1.13})-(\ref{1.15}) that there exists 
 $C_\gamma'>0$ such that for any $s\in\cC^\infty
(\partial M, $ $(S (TM)\otimes E )|_{\partial M} )^\gamma$, we have
\begin{align}\label{1.16}
\left\|D^E_{\partial M, T}s\right\|^2_{\partial M,0}
\geqslant \left\|D^E_{\partial M}s\right\|^2_{\partial M,0}
-T C_\gamma' \|s\|^2_{\partial M,0} + 4 T^2C \|s\|^2_{\partial M,0}.
\end{align}
The  (\ref{1.16}) implies that Proposition \ref{t1.2} holds
with $T_\gamma =2C_\gamma'/C$.
\end{proof}

\begin{prop}\label{t1.1} For  $\gamma\in \Lambda^*_+$, there
exists $T_\gamma\geqslant 0$ such that $Q_{APS,T}^M (E, \Psi)_{\gamma}$ 
does not depend on $T> T_\gamma$.
\end{prop}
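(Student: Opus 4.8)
The plan is to deduce this from the invertibility established in Proposition \ref{t1.2}, using the general principle that the Atiyah--Patodi--Singer index of an elliptic boundary value problem is invariant under a homotopy of the underlying operator provided the family of induced boundary operators remains invertible throughout. Concretely, fix $\gamma\in\Lambda^*_+$ and let $T_\gamma$ be as in Proposition \ref{t1.2}, so that for $T>T_\gamma$ the boundary operator $D^E_{\partial M,T}(\gamma)$ is invertible. First I would observe that the family $T\mapsto D^E_{M,T}(\gamma)$ is a continuous (indeed affine) family of first-order elliptic operators on the compact manifold with boundary $M$, with boundary symbol independent of $T$; and the associated family of APS projections $P_{\geqslant 0,+,T}$, restricted to the $\gamma$-component, varies continuously in $T>T_\gamma$ because the boundary operator has no kernel there (the spectral projection onto $[0,\infty)$ depends continuously on a self-adjoint elliptic operator with a fixed spectral gap at $0$).

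Second, I would invoke the standard fact that for a norm-continuous family of Fredholm operators the index is locally constant. More precisely, on the half-line $T>T_\gamma$ the operators $(D^E_{M,+,T}(\gamma),P_{\geqslant 0,+,T}(\gamma))$ form a continuous family of Fredholm operators (each is Fredholm by the APS theory, as recalled after \eqref{1.10}), so $T\mapsto \Ind(D^E_{M,+,T},P_{\geqslant 0,+,T})_\gamma$ is a continuous $\Z$-valued function of $T$ on the connected interval $(T_\gamma,\infty)$, hence constant. By the definition \eqref{1.10} of $Q^M_{APS,T}(E,\Psi)_\gamma$, this is exactly the assertion of Proposition \ref{t1.1}.

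The one point that needs genuine care --- and which I expect to be the main technical obstacle --- is the continuous dependence of the boundary value problem on $T$ at the level where the index is computed: one must check that the APS projection $P_{\geqslant 0,+,T}$ varies norm-continuously in $T>T_\gamma$ after passing to the $\gamma$-component, and that the resulting domain $\{s: P_{\geqslant 0,+,T}(s|_{\partial M})=0\}$ gives a continuously varying family of Fredholm operators on a fixed Sobolev space. This is where the uniform lower bound \eqref{1.16c} of Proposition \ref{t1.2} is essential: it gives a spectral gap of $D^E_{\partial M,T}(\gamma)$ around $0$ that is uniform on compact $T$-intervals inside $(T_\gamma,\infty)$, which is precisely what is needed to keep the spectral projection $P_{\geqslant 0,+,T}(\gamma)$ continuous and to apply the homotopy invariance of the APS index (as in \cite{APS}; see also \cite[\S 2]{Gilkey93}). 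Once this continuity is in place the conclusion is immediate, so I would phrase the proof as: the boundary operators $D^E_{\partial M,T}(\gamma)$ are invertible with locally uniform spectral gap for $T>T_\gamma$ by Proposition \ref{t1.2}, hence the APS boundary conditions define a continuous family of Fredholm operators, hence the integer-valued index $Q^M_{APS,T}(E,\Psi)_\gamma$ is constant on $(T_\gamma,\infty)$.
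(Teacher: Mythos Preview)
Your proposal is correct and follows essentially the same approach as the paper's own proof: restrict to the $\gamma$-component, use Proposition~\ref{t1.2} to guarantee invertibility of the boundary operator for $T>T_\gamma$, conclude that $(D^{E}_{M,+,T}(\gamma),P_{\geqslant 0,+,T}(\gamma))$ is a continuous family of Fredholm operators, and invoke constancy of the index. The paper's proof is terser---it simply asserts that the family is continuous once the boundary operator is invertible---whereas you spell out the reason (continuity of the spectral projection across a uniform spectral gap), which is indeed the content behind that assertion.
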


\begin{proof} For $\gamma\in \Lambda^*_+$, let 
$(D^{E}_{M,+,T}(\gamma),P_{\geqslant 0,+,T}(\gamma))$ denote the
corresponding operator with the APS 
boundary condition \cite{APS}, 
which is just the restriction of $(D^{E}_{M,+,T}
,P_{\geqslant 0,+,T} )$ to the corresponding  $\gamma$ component.
Thus, $(D^{E}_{M,+,T}(\gamma),P_{\geqslant 0,+,T}(\gamma))$ is
elliptic and defines a Fredholm operator, the index of which is
given by (\ref{1.10}),
\begin{align}\label{a1.10}
\Ind\left(D^{E}_{M,+,T}(\gamma), P_{\geqslant 0,+,T}(\gamma)\right)=
Q_{APS,T}^M \left(E, \Psi\right)_{\gamma}\cdot V^G_\gamma.
\end{align}
By Proposition \ref{t1.2}, there exists $T_\gamma\geqslant 0$ such
that $(D^{E}_{M,+,T}(\gamma),P_{\geqslant 0,+,T}(\gamma))$ forms a
continuous family of Fredholm operators for $T> T_\gamma$.
Therefore, $\Ind (D^{E}_{M,+,T}(\gamma), P_{\geqslant 0,+,T}(\gamma) )$ does
not depend on $T> T_\gamma$. By  (\ref{a1.10}),
this completes the proof of our proposition.
\end{proof}

\begin{defn}\label{t11.3}
By Proposition \ref{t1.1}, for $\gamma\in \Lambda_{+}^*$,
we can associate an integer $Q_{APS }^M\left(E,\Psi\right)_{\gamma}$
that is equal to $Q_{APS,T}^M\left(E,\Psi\right)_{\gamma}$ 
for $T> T_\gamma$.
\end{defn}

\begin{rem}\label{t11.2}  The same argument  shows that
 the APS type index
$Q_{APS }^M\left(E,\Psi\right)_{\gamma}$ does not depend on 
the given metrics and connections.
It only depends on the
homotopy class of $\Psi$ as long as 
$\Psi^M|_{\partial M}$ does not vanish over $\partial M$.
\end{rem}

\subsection{Transversal index and APS index}\label{s1.5}


\begin{thm}\label{t1.3}  For
    $\gamma\in \Lambda^*_+$, the following identity holds:
\begin{align}\label{1.18}
\Ind\left(\sigma_{E, \Psi}^M\right)_{\gamma}
=Q_{APS}^M\left(E, \Psi\right)_{\gamma}.
\end{align}
\end{thm}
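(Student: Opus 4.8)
The plan is to show that both sides of \eqref{1.18} compute the same $G$-equivariant index, by interpolating between the transversal symbol picture and the APS picture through a single family of operators on the compact manifold-with-boundary $M$. Concretely, I would fix $\gamma\in\Lambda^*_+$ and work throughout with the $\gamma$-component, which by \eqref{a1.111} and \eqref{a1.1111} reduces everything to operators on a (possibly infinite-dimensional, but $G$-isotypically finite) Hilbert space where Fredholm theory applies. The key object is the deformed Dirac operator $D^E_{M,T}=D^E_M+\sqrt{-1}\,T\,c(\Psi^M)$ from \eqref{1.8}, whose principal symbol is exactly $\sigma^M_{E,\Psi}$ after the semiclassical rescaling, and whose boundary operator is $D^E_{\partial M,T}$ from \eqref{1.9}. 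Proposition~\ref{t1.2} already guarantees that $D^E_{\partial M,T}(\gamma)$ is invertible for $T>T_\gamma$, so the APS projections $P_{\geqslant 0,+,T}$, $P_{>0,-,T}$ define a genuinely elliptic boundary value problem whose index $Q^M_{APS,T}(E,\Psi)_\gamma=Q^M_{APS}(E,\Psi)_\gamma$ is $T$-independent for $T>T_\gamma$ by Proposition~\ref{t1.1}.

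The heart of the argument is to invoke Braverman's theorem (the reference \cite{Brav02} flagged in the outline of Section~\ref{s1.5}), which identifies the Atiyah transversal index of a transversally elliptic symbol of the form $\pi^*(\sqrt{-1}\,c(v+\Psi^M))$ with the index of the associated APS-type boundary value problem on $M$, provided $\Psi^M$ is nonvanishing on $\partial M$. First I would recall the precise statement of Braverman's deformation index theorem and check that our $D^E_{M,T}$, together with the spectral boundary condition built from $D^E_{\partial M,T}$, falls exactly into its hypotheses: this requires the nonvanishing of $\Psi^M$ on $\partial M$ (assumed in Section~\ref{s1.1}), the self-adjointness of $D^E_{\partial M,T}$ and its compatibility $D^E_{\partial M,\pm,T}=c(e_n)^{-1}(-D^E_{\partial M,\mp,T})c(e_n)$ from \eqref{a1.61}, and the invertibility from Proposition~\ref{t1.2}. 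One then has, on the $\gamma$-component, the chain of equalities: transversal index $=$ index of the deformed/APS problem for large $T$ $=$ $Q^M_{APS,T}(E,\Psi)_\gamma$ for $T>T_\gamma$ $=$ $Q^M_{APS}(E,\Psi)_\gamma$.

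A small technical point worth spelling out is the passage from the genuinely transversally elliptic symbol on the noncompact $T_G\widehat M$ (interior only) to the closed manifold-with-boundary $M$: the definition \eqref{1.3} of $\Ind(\sigma^M_{E,\Psi})$ already lives on $T_G\widehat M$ precisely because the non-invertibility locus is compact in the interior, so there is no loss in using the compact $M$ with its boundary collar $\partial M\times[0,\varepsilon]$ from Section~\ref{s1.2}, where the operator $D^E_{M,T}$ is in product form near the boundary and Braverman's comparison is literally applicable. I would also note that by Remark~\ref{t11.2} the APS side is independent of the chosen metrics and connections and depends only on the homotopy class of $\Psi$, matching the corresponding invariance property of the transversal index stated after \eqref{1.3}; this consistency is a useful sanity check but not logically needed.

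The main obstacle, and essentially the only real content beyond bookkeeping, is verifying that Braverman's theorem applies verbatim in the $G$-equivariant, $\gamma$-graded setting with a possibly noncompact fixed-point set of $\Psi^M$ — i.e.\ that the APS boundary condition one naturally writes down from the spectral decomposition of $D^E_{\partial M,T}$ coincides with the one in \cite{Brav02} and that the resulting Fredholm index on each $\gamma$-component agrees with Atiyah's multiplicity $\Ind(\sigma^M_{E,\Psi})_\gamma$. Since Braverman's result is stated for exactly this kind of deformed Dirac operator and the invertibility of the boundary operator (our Proposition~\ref{t1.2}) is the hypothesis that makes the APS index well-defined and deformation-invariant, this step is a citation combined with a careful matching of conventions rather than a new computation; the equality \eqref{1.18} then follows immediately on each $\gamma$-component and hence as stated.
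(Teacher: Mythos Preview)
There is a genuine gap: you have mischaracterized what Braverman's theorem \cite[Theorem 5.5]{Brav02} actually says. That result is an $L^2$-index theorem on a \emph{complete noncompact manifold without boundary}: given a $G$-manifold $\widetilde M$ with a taming map $\widetilde\Psi$ whose induced vector field is nonvanishing outside a compact set, and an admissible function $f$, the deformed Dirac operator $D^{\widetilde E}_{\widetilde M,f}$ is Fredholm on each $\gamma$-component and its $L^2$-index equals $\Ind(\sigma^{\widetilde M}_{\widetilde E,\widetilde\Psi})_\gamma$. It has no APS boundary condition in its hypotheses and no boundary at all, so the sentence ``check that our $D^E_{M,T}$, together with the spectral boundary condition built from $D^E_{\partial M,T}$, falls exactly into its hypotheses'' cannot succeed.

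What the paper actually does is precisely the bridge you are treating as a citation. One first attaches an infinite cylinder $\partial M\times(-\infty,0]$ to obtain a complete $\widetilde M$, puts all data into product form there, and applies Braverman on $\widetilde M$ with an admissible weight $f$ growing like $e^{-x_n}$ on the end; this gives \eqref{3.83}. The comparison with the APS index on $M$ then requires a genuine cutting/gluing argument: a Bunke-type unitary $U$ is used to show that the $L^2$-index on $\widetilde M$ plus the APS index on a finite cylinder $Z$ equals the APS index on $M$ plus an APS index on the half-infinite cylinder $\widetilde M_2$ (this is \eqref{3.850}). The $Z$-contribution vanishes by the product structure and Proposition~\ref{t1.2}, giving \eqref{3.852}. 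The remaining content---and the only nontrivial analytic input beyond Braverman---is Lemma~\ref{t1.7}, which shows the APS index on the half-infinite cylinder $\widetilde M_2$ vanishes for $T$ large. That lemma is proved by a Bismut--Lebeau localization estimate (partition of unity, Lichnerowicz, Green's formula, and the boundary invertibility \eqref{1.16c}), and it is not a matching-of-conventions exercise. Your proposal contains none of these steps.
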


The proof of Theorem \ref{t1.3} consists of two steps. In a
first step, by  applying  a result of Braverman  \cite[Theorem
5.5]{Brav02}, we express $\Ind \left(\sigma_{E, \Psi}^M\right)_{\gamma}$
as the  $L^2$-index of a Dirac operator on $\wi{M}= M\cup (\partial M\times
(-\infty,0])$, and we show that the difference of the above $L^2$-index and
$Q_{APS}^M\left(E, \Psi\right)_{\gamma}$ is equal to 
an index on the cylindrical end. In a second step, we prove that 
the index on the cylindrical end is zero.


We start by deforming our geometric data to those on a manifold with
cylindrical end.

Recall that $g^{T\partial M}$ is the Riemannian metric on $\partial
M$ induced by  $g^{TM}$.
 We use the inward geodesic flow to identify a neighborhood
of   $\partial M$ with the collar $\partial M\times [0,\var]$. As
$g^{TM}$ is $G$-invariant, the $G$-action on $\partial M\times
[0,\var]$ is induced by the $G$-action on $\partial M$, and  there
exists a family of metrics
 $g^{T\partial M}(x_n)$ on $T\partial M$ verifying
\begin{align}\label{3.1c}
 g^{TM}_{(y,x_n)}= g^{T\partial M}_{y}(x_n) + (dx_n)^2,\quad
(y,x_n)\in \partial M\times [0, \var].
\end{align}

For $(y,x_n)\in \partial M\times [0, \var]$,
we identify  $S(TM)_{(y,x_n)}, E_{(y,x_n)}$
to $S(TM)_{(y,0)}, E_{(y,0)}$ by using the parallel transport with
respect to $\nabla ^{S(TM)}$, $\nabla ^{E}$ along the geodesic
$[0,1] \ni t\to (y, t x_n)$. Thus, the restrictions of
 $(S(TM), h^{S(TM)})$, $(E, h^E)$ to  $\partial M\times [0, \var]$
are the pull-back of their restrictions $(S(TM)|_{\partial M},
h^{S(TM)}|_{\partial M})$, $(E|_{\partial M}, h^E|_{\partial M})$ to
${\partial M}$. Moreover, the $G$-actions on $S(TM)$, $E$ on
$\partial M\times [0, \var]$ are induced by the $G$-actions on
$S(TM)|_{\partial M}$, $E|_{\partial M}$ under this identification.

By the homotopy invariance of the transversal index
$\Ind (\sigma^M_{E,\Psi})_{\gamma}$ (cf. \eqref{1.3}) and of the APS
index  $Q_{APS }^M\left(E,\Psi\right)_{\gamma}$ (cf. Remark \ref{t11.2}), 
to establish Theorem \ref{t1.3}, we may and we will
 assume that $\var=2$ and that $g^{TM}$, $h^{S(TM)}$,
$\nabla^{S(TM)}$,  $\nabla^{E}$, $\Psi$ 
have product structures on
$\partial M\times [0, 2]$, and that the $G$-actions on objects such
as $E, S(TM)$ on $\partial M\times [0, 2]$ are the product of the
$G$-actions on their restrictions to $\partial M$ and the identity
in the direction $[0, 2]$.

We attach now an infinite cylinder $\partial M\times (-\infty, 0]$
to $M$ along the boundary $\partial M$ and extend trivially all
objects on $M$ to $\wi{M}= M\cup (\partial M\times (-\infty, 0])$.
We decorate the extended objects on $\wi{M}$ by a ``$\
\widetilde{\ }\ $''.   
Thus for  $(y,x_n)\in \partial M\times (-\infty, 2]$, we have
\begin{align}\label{3.3c}\begin{split}
\wi{\Psi}  (y,x_n )&= \Psi(y,0)\in \kg,\quad
g^{T\wi{M}}_{(y,x_n)}= g^{T\partial M}_{y} + (dx_n)^2,\\
 (S  (T\wi{M} ),& h^{S (T\wi{M} )}, \nabla^{S (T\wi{M} )} )
|_{\partial M\times (-\infty, 2]} = \pi_{1}^* (S(TM)|_{\partial M},
h^{S(TM)}|_{\partial M},
\nabla^{S(TM)}|_{\partial M} ),\\
  (\wi{E},h^{\wi{E}}, & \nabla^{\wi{E}}  )|_{\partial M\times
(-\infty, 2]}= \pi_{1}^*  (E|_{\partial M}, h^E|_{\partial M},
\nabla^{E}|_{\partial M} ),
\end{split}
\end{align}
with $\pi_{1}: \partial M\times  (-\infty, 2]\to  \partial M$ the
natural projection.

Let $D^{\wi{E}}_{\wi{M}}$ be the Spin$^c$ Dirac operator on $\cC^\infty_{0}
(\wi{M}, S(T\wi{M})\otimes \wi{E})$ defined as in (\ref{1.4}).
By (\ref{1.4}), (\ref{1.5}) and (\ref{3.3c}), we have on  $\partial M\times
(-\infty, 2]$,
\begin{align}\label{3.62}
D^{\wi{E}}_{\wi{M}} = c(e_{n}) D^E_{\partial M} + c(e_{n})
\frac{\partial}{\partial x_{n}}.
\end{align}
For any $h\in \cC^{\infty}(\wi{M})$, 
let $D^{\wi{E}}_{\wi{M}, h}$ be the operator on $\cC_0^\infty( \wi{M},
S(T\wi{M})\otimes \wi{E})$ defined by
\begin{align}\label{3.60}
D^{\wi{E}}_{\wi{M}, h} = D^{\wi{E}}_{\wi{M}} + \sqrt{-1} \, h  \,
c\Big(\wi{\Psi}^{\wi{M}}\Big).
\end{align}
Let  ${\bf H}^1_{\pm, h}(\wi{M})$ be the Sobolev
space  obtained by completion of $\cC_0^\infty( \wi{M},$
$S_{\pm}(T\wi{M})\otimes \wi{E})$ under the norm $\|\cdot\|_{h,1}$ defined by
\begin{align}\label{3.821}
\|s\|_{h,1}^2=\|s\|_0^2+\left\|D^{\wi{E}}_{\wi{M}, h}s\right\|_0^2.
\end{align}

Let $f$ be a strictly positive $G$-invariant smooth function on $\wi{M}$
 such that $f|_M\equiv 1$,
 and such that 
for $(y,x_{n})\in\partial M\times (-\infty, 0]$,
\begin{align}\label{3.4c}
f(y,x_n) \text{ does not depend on  } y \text{ and }
f(y,x_n) = e^{-x_{n}} \text{ if } x_{n}\leqslant - 1.
\end{align}
For $T>0$, $Tf$ is an admissible function on $\wi{M}$ for the triple
$(S(T\wi{M})\otimes \wi{E}, \nabla^{S(T\wi{M})\otimes
\wi{E}},\wi{\Psi})$ in the
sense of Braverman \cite[Definition 2.6]{Brav02} as we are in the 
product case.

By a result of Braverman \cite[Theorem 5.5]{Brav02} (cf. also \cite{MZ12}), 
for $T>0$, $\gamma\in\Lambda_+^*$,
$D^{\wi{E}}_{\wi{M}, Tf}(\gamma)$,
$D^{\wi{E}}_{\wi{M},\pm, Tf}(\gamma)$ extend to  bounded  Fredholm
operators,  for which we keep  the same notation,
\begin{align}\label{3.601}
D^{\wi{E}}_{\wi{M},\pm, Tf}(\gamma): {\bf
H}^1_{\pm, Tf}(\wi{M})^\gamma \to {L}^2
\left(\wi{M},S_{\mp}(T\wi{M})\otimes \wi{E}\right)^\gamma,
\end{align}
and the following identity holds:
\begin{align}\label{3.83}
    \Ind\left(D^{\wi{E}}_{\wi{M},+,Tf }(\gamma)\right)
    =\Ind \left(\sigma^{{M}}_{{E}, \Psi}\right)_{\gamma}\cdot V^G_\gamma.
\end{align}

Set
\begin{align}\label{3.841}\begin{split}
    &\wi{M}_1=\partial {M}\times (-\infty,1]\subset \wi{M},\quad
    \wi{M}_{2}=\partial  {M}\times (-\infty,2]\subset \wi{M},\\
&Z=\partial M\times [0,2]\subset \wi{M}.
\end{split}\end{align}
Let $\xi\in\cC^\infty([0,2])$ be such that
\begin{align}\label{3.843}
\xi|_{[0,1/2]}=1,\ \ 0\leqslant\xi|_{[1/2,3/2]}\leqslant 1,\ \
\xi|_{[3/2,2]}=0 ,
\end{align}
and such that
\begin{align}\label{3.844}
\varphi=\left(1-\xi^2\right)^{1/2}
\end{align}
is smooth.
 Clearly, $\xi$ extends to $\wi{M}_{2}$ by setting $\xi=1$ on
 $\wi{M}_0=\wi{M}_1\setminus (\partial M\times(0,1])$. It also
 extends to $M$ by setting  $\xi=0$ on $M\setminus (\partial
 M\times [0,2))$. Thus $\varphi$ also extends to $\wi{M}_0$ and
 $M\setminus (\partial M\times [0,2))$. 
Set
\begin{align}\label{3.845}\begin{split}
 &H=L^2\left(\wi{M},   S(T\wi{M})\otimes \wi{E}\right)\oplus
 L^2\left(Z,(S(TM)\otimes {E})|_Z\right),\\
&H'=L^2\left(\wi{M}_{2},\left.\left( S(T\wi{M})
\otimes \wi{E}\right)\right|_{\wi{M}_{2}}\right)\oplus
 L^2\left( M, S\left(TM\right)
\otimes  E\right).
\end{split}\end{align}
 
Let $U : H\to H'$ be defined by :
\begin{align}\label{3.846}
 (s_{1},s_{2})\in H\longrightarrow \Big(\xi \,  s_{1} -\varphi \,  s_{2}, 
 \varphi\,   s_{1}+  \xi\,   s_{2}\Big)\in H'.
\end{align}
Let $U^{*}: H'\to H$ be the adjoint of $U$. By  (\ref{3.846}), 
$U^{*}(s_{1},s_{2})= (\xi \,  s_{1} +\varphi \,   s_{2},
-\varphi\, s_{1}+ \xi\,  s_{2})\in H$.  On sees easily that 
$U$ is unitary (cf. \cite[\S 3.2]{Bu95}), that is,
\begin{align}\label{3.847}
   U^*U = {\rm Id}_H,\ \ \ \ \ \ \ UU^*={\rm Id}_{H'}.
\end{align}

We fix $\gamma\in\Lambda_+^*$ and let $T>0$.
If $W$ is one of $\wi{M}_{2}$, $M$ and $Z$, 
let $(D^{\wi{E}}_{W,+,Tf}(\gamma), P^W_{\geqslant 0,+,Tf}(\gamma))$ be
the operator with the APS boundary condition:
\begin{multline}\label{3.848}
 {\bf H}^1_{+,Tf}\left(W,P^W_{\geqslant 0,+,Tf}\right)^\gamma
 =\left\{ u\in {\bf H}^1_{+,Tf}(W)^\gamma,\ \ P^W_{\geqslant
 0,+,Tf}(\gamma) \left(u|_{\partial W}\right)=0\right\}\\
\longrightarrow
 L^2\left(W,\left.\left(S_-(T\wi{M})\otimes
 \wi{E}\right)\right|_W\right)^\gamma.
\end{multline}
Since $f=1$ on $M$ and $Z$, we know that for $W=M$ or $Z$,
\begin{align}\label{3.83c}
    \Big(D^{\wi{E}}_{W,+,Tf}(\gamma), P^W_{\geqslant 0,+,Tf}(\gamma)\Big)
    =\Big(D^{{E}}_{W,+,T}(\gamma), P_{\geqslant 0,+,T}(\gamma)\Big),
\end{align}
and they are Fredholm as explained in Section \ref{s1.2}.

By (\ref{3.601}), (\ref{3.847}) and (\ref{3.83c}),  we see that
\begin{align}\label{3.849}
 U \left\{D^{\wi{E}}_{\wi{M},+,Tf }(\gamma)+\Big(D^{{E}}_{Z,+,T}(\gamma),
P_{\geqslant 0,+,T}(\gamma)\Big)\right\}U^*:\end{align}
 $${\bf H}^1_{+,Tf}\left(\wi{M}_{2},
 P^{\wi{M}_{2}}_{\geqslant 0,+,Tf}\right)^\gamma
\oplus {\bf H}^1_{+,Tf}\left(M,
P^{M}_{\geqslant 0,+,Tf}\right)^\gamma$$
$$\to L^2\left(\wi{M}_{2},\left.\left(S_-(T\wi{M})\otimes
 \wi{E}\right)\right|_{\wi{M}_{2}}\right)^\gamma
\oplus L^2\Big(M,S_-\left(TM\right)\otimes {E}\Big)^\gamma$$
 is Fredholm.

 By the construction of $U$, it is clear that $U$ preserves the
 APS boundary conditions on the corresponding boundary components.
 Moreover, the difference
 \begin{multline}\label{3.8491}
     U \left\{D^{\wi{E}}_{\wi{M},+,Tf }(\gamma) 
     +\Big(D^{{E}}_{Z,+,T}(\gamma),
P_{\geqslant 0,+,T}(\gamma)\Big)\right\}U^* \\
- \left(D^{\wi{E}}_{\wi{M}_{2},+,Tf}(\gamma),
P^{\wi{M}_{2}}_{\geqslant 0,+,Tf}(\gamma)\right)
-\Big(D^{{E}}_{M,+,T}(\gamma),
P_{\geqslant 0,+,T}(\gamma)\Big)
\end{multline}
 is a zero-order differential operator with compact
support\footnote{Indeed, for any $s\in \cC_0^\infty( \wi{M}_{2} ,
(S_+(T\wi{M})\otimes
 \wi{E})|_{\wi{M}_{2} })\oplus \cC_0^\infty(   M, (S_+(T\wi{M})\otimes
 \wi{E})|_{  M})$ which is supported
 in $\wi{M}_{2}\setminus (\partial M\times [0,2])$,
 the difference operator in (\ref{3.8491}) acts on $s$ as a zero
 operator.},
 which implies that it is a compact operator.
Thus, $$ \left(D^{\wi{E}}_{\wi{M}_{2},+,Tf}(\gamma),
P^{\wi{M}_{2}}_{\geqslant 0,+,Tf}(\gamma) \right)
+ \Big(D^{{E}}_{M,+,T}(\gamma), P_{\geqslant
 0,+,T}(\gamma) \Big)$$ is Fredholm.
In particular, $\left(D^{\wi{E}}_{\wi{M}_{2},+,Tf}, P^{\wi{M}_{2}}_{\geqslant
 0,+,Tf}(\gamma) \right)$ is Fredholm. Moreover, we have
\begin{multline}\label{3.850}
  \Ind  \left(D^{\wi{E}}_{\wi{M},+,Tf }(\gamma)\right)+
  \Ind \Big(D^{{E}}_{Z,+,T}(\gamma),
  P_{\geqslant 0,+,T}(\gamma)\Big)\\
 = \Ind \left(D^{\wi{E}}_{\wi{M}_{2},+,Tf}(\gamma),
P^{\wi{M}_{2}}_{\geqslant 0,+,Tf}(\gamma) \right)
+  \Ind \Big(D^{{E}}_{M,+,T}(\gamma),
P_{\geqslant 0,+,T}(\gamma) \Big).
\end{multline}
Note that $\partial Z=(\partial M\times\{0\})\cup(-\partial M\times\{2\})$.
By (\ref{3.3c}) and (\ref{3.62}), 
 $P_{\geqslant 0,+,T}|_{\partial M\times \{0\}} = P_{\geqslant 0,+,T}$,
 $P_{> 0,-,T}|_{\partial M\times \{0\}} = P_{> 0,-,T}$,
and $P_{\geqslant 0,+,T}|_{-\partial M\times \{2\}}$ 
(resp.  $P_{> 0,-,T}|_{-\partial M\times \{2\}}$) is the orthogonal projection 
from  $L^2(\partial M, (S_+(TM)$ $\otimes E)|_{\partial M} )$ 
onto $\oplus_{\lambda\leqslant 0} E_{\lambda,+,T}$ 
(resp. $L^2(\partial M,$ $(S_-(TM)\otimes E)|_{\partial M} )$ 
onto $\oplus_{\lambda< 0} E_{\lambda,-,T}$), thus from 
the product structure on $Z$, we get 
(compare with \cite[Proposition 3.11]{APS})
\begin{align}\label{3.84c}\begin{split}
&\Ker\left(D^{{E}}_{Z,+,T}(\gamma), P_{\geqslant 0,+,T}(\gamma)\right)
=0, \\
& \Ker\left(D^{{E}}_{Z,-,T}(\gamma), P_{> 0,-,T}(\gamma)\right)
=\Ker\left(D^E_{\partial M,-,T} (\gamma)\right).
\end{split}\end{align}
Combining (\ref{3.84c}) with Proposition \ref{t1.2},  
 for $T> T_\gamma$, we get
\begin{align}\label{3.852}
 \Ind \Big(D^{{E}}_{Z,+,T}(\gamma), P_{\geqslant 0,+,T}(\gamma)\Big) =0  .
\end{align}

By Definition \ref{t11.3},  (\ref{a1.10}), (\ref{3.83}), (\ref{3.850})
and (\ref{3.852}), 
for any $T> T_\gamma$,
\begin{align}\label{3.854}
\Ind \left(D^{\wi{E}}_{\wi{M}_{2},+,Tf}(\gamma),
P^{\wi{M}_{2}}_{\geqslant 0,+,Tf}(\gamma) \right)
= \left(\Ind \left(\sigma^{{M}}_{{E}, \Psi}\right)_{\gamma}
-Q_{APS }^M\left(E, \Psi\right)_{\gamma} \right) \cdot V_{\gamma}^G .
\end{align}


For a second step, we need to prove the following Lemma.
\begin{lemma}\label{t1.7} For $\gamma\in \Lambda^*_+$, there exists 
$T_2> T_\gamma$ such that for $T> T_2$, we have
\begin{align}\label{3.85c}
\Ind \left(D^{\wi{E}}_{\wi{M}_{2},+,Tf}(\gamma),
P^{\wi{M}_{2}}_{\geqslant 0,+,Tf}(\gamma) \right)=0.
\end{align}
\end{lemma}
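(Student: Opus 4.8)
The manifold $\wi{M}_2 = \partial M \times (-\infty, 2]$ is a half-cylinder, so the plan is to analyze the operator $(D^{\wi{E}}_{\wi{M}_2,+,Tf}(\gamma), P^{\wi{M}_2}_{\geqslant 0,+,Tf}(\gamma))$ directly by separation of variables in the $x_n$-coordinate and an energy/vanishing estimate, showing that both its kernel and the kernel of its adjoint are trivial for $T$ large. On the cylinder, by \eqref{3.62} and \eqref{3.60}, we have $D^{\wi{E}}_{\wi{M},Tf} = c(e_n)\bigl(\partial_{x_n} + D^E_{\partial M, Tf}\bigr)$ where $D^E_{\partial M, Tf}$ is the boundary operator of \eqref{1.9} with the coefficient $T$ replaced by the $x_n$-dependent admissible weight $Tf(x_n)$. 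The key input is Proposition \ref{t1.2}: on the $\gamma$-component, for $T > T_\gamma$, $D^E_{\partial M, S}(\gamma)$ is invertible with a lower bound $\|D^E_{\partial M,S}s\|^2 \geqslant \|D^E_{\partial M}s\|^2 + C_\gamma S^2 \|s\|^2$ uniformly in $S \geqslant T$ (note $f \geqslant $ some positive constant on $[0,2]$, so $Tf(x_n) \geqslant cT$ there, while on the $e^{-x_n}$ end $Tf(x_n)$ grows, only improving the estimate).

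First I would set up the Bochner-type identity for $(D^{\wi{E}}_{\wi{M}_2,Tf})^2$ on sections supported in $\wi{M}_2$, integrating by parts in $x_n$. The boundary term at $x_n = 2$ is governed precisely by the APS projection $P^{\wi{M}_2}_{\geqslant 0,+,Tf}$, which is designed so that this boundary contribution has the favorable sign (this is the standard point of the APS condition, cf.\ \cite[Proposition 3.11]{APS} and the computation already used in \eqref{3.84c}). Writing $s = s(x_n, \cdot)$ as a section over $\partial M$ depending on $x_n$, one gets schematically
\begin{align*}
\bigl\|D^{\wi{E}}_{\wi{M}_2,Tf}\, s\bigr\|_0^2 = \int_{-\infty}^{2}\!\Bigl(\bigl\|\partial_{x_n} s\bigr\|_{\partial M,0}^2 + \bigl\|D^E_{\partial M, Tf} s\bigr\|_{\partial M,0}^2\Bigr) dx_n + (\text{APS boundary term} \geqslant 0),
\end{align*}
after absorbing the cross term via self-adjointness of $D^E_{\partial M, Tf}$ and the $x_n$-derivative of the weight $f$; the term $\frac{d}{dx_n}(Tf)\,c(e_n)c(\wi{\Psi}^{\wi{M}})$ is a bounded perturbation of size $O(T)$ on the compact region $[0,1]$ and has the right sign on $\{x_n \leqslant -1\}$ where $f = e^{-x_n}$ is increasing. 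Using the Proposition \ref{t1.2} bound $\|D^E_{\partial M,Tf(x_n)} s\|^2 \geqslant C_\gamma (Tf(x_n))^2\|s\|^2 \geqslant c' C_\gamma T^2 \|s\|^2$ on $[0,2]$ and the even larger bound on the end, I conclude that for $T$ larger than some $T_2 > T_\gamma$ the whole right-hand side is bounded below by $c T^2 \|s\|_0^2$, hence $D^{\wi{E}}_{\wi{M}_2,+,Tf}(\gamma)$ has trivial kernel. The same argument applied to $D^{\wi{E}}_{\wi{M}_2,-,Tf}(\gamma)$ with its $P_{>0}$ condition — its adjoint, via \eqref{a1.61} — gives trivial cokernel. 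Therefore the index \eqref{3.85c} vanishes.

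The main obstacle I anticipate is the careful bookkeeping of the boundary term at $x_n = 2$ under the APS condition combined with the fact that the weight $f$ is only a general admissible function, not literally constant — so one must make sure the cross term $\int \langle \partial_{x_n} s, D^E_{\partial M, Tf} s\rangle$ after integration by parts produces exactly the weight-derivative term $\int \frac{d}{dx_n}(Tf)\langle c(e_n)c(\wi{\Psi}^{\wi{M}}) s, s\rangle$ and that this is controlled (sign-definite on $\{x_n \le -1\}$, $O(T)$ on the compact piece) rather than destroying the $T^2$ gain. A secondary point is justifying the integration by parts and the manipulations for $L^2$-sections in the domain of the Fredholm extension (density of $\cC_0^\infty$ in the graph norm ${\bf H}^1_{+,Tf}$), but this is handled by the mapping property \eqref{3.601} from Braverman's theorem already invoked. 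Once the energy estimate is in place, the vanishing of kernel and cokernel — and hence of the index — is immediate. In fact, as the footnote after \eqref{0.103} notes, this is exactly the content of \cite[Theorems 2.6, 4.3]{TZ99} in the special case $N = \mathrm{point}$, $\eta = 0$, so I would also remark that one may alternatively cite that argument verbatim after the product reduction.
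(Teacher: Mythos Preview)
Your approach is sound and close in spirit to the paper's, but organized differently. The paper uses a Bismut--Lebeau partition of unity $h_1^2 + h_2^2 = 1$ subordinate to $U_1 = \partial M \times (-\infty, 1)$ and $U_2 = \partial M \times (0, 2]$: on $U_2$ one has $f \equiv 1$, so the weight-derivative term is absent and only the APS boundary term needs care (handled via Green's formula \eqref{3.856}--\eqref{3.8580}), while on $U_1$ there is no boundary and the Bochner formula \eqref{3.857} applies directly; the two estimates are glued in \eqref{3.860}--\eqref{3.862}. You instead run a single separation-of-variables computation on the whole half-cylinder, which is more economical but forces you to control both difficulties at once. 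One correction is needed: the commutator term $T f'(x_n)\, \sqrt{-1}\, c(e_n) c(\wi{\Psi}^{\wi{M}})$ is \emph{not} sign-definite on $\{x_n \leqslant -1\}$, because $\sqrt{-1}\, c(e_n) c(\wi{\Psi}^{\wi{M}})$ is an indefinite self-adjoint endomorphism; the right argument is that $|f'| = f$ there, so this term is $O(Tf)$ pointwise and is absorbed by the $C_\gamma (Tf)^2$ gain from Proposition~\ref{t1.2} once $T$ is large (this is exactly how \eqref{3.86c} handles it). Finally, your closing reference to \cite[Theorems 2.6, 4.3]{TZ99} via the footnote after \eqref{0.103} is misplaced: that footnote concerns the vanishing of $Q_{APS}^{M_{a,a'}}(L,\mu)_{\gamma=0}$ on a compact annulus (Theorem~\ref{t1.5} here), not the index on the half-cylinder $\wi{M}_2$, and there is no $N$ or $\eta$ in the present lemma.
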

\begin{proof}
Following Bismut-Lebeau \cite[pp. 115-116]{BL91}, let $U_1=\partial
M\times (-\infty, 1)$, $U_2=\partial M\times (0,2]$ be an open
covering of $\wi{M}_{2}$. Let $h_1,\, h_2$ be two smooth
$G$-invariant functions on $\wi{M}_{2}$ such that $h_1^2,\, h_2^2$
form a partition of unity associated with the covering $\{U_i\}_{i=1}^2$.

By  (\ref{1.2z}), (\ref{1.4}),  (\ref{1.914}),
(\ref{1.14}) and (\ref{3.60}), we deduce that
\begin{multline}\label{3.857}
\left(D^{\wi{E}}_{\wi{M}, Tf}\right)^2=\left(D^{\wi{E}}_{\wi{M}}\right)^2
+\sqrt{-1}T\, \sum_{i=1}^n c(e_i)
c\left(\nabla^{T\wi{M}}_{e_i}\left(f\wi{\Psi}^{\wi{M}}\right)\right)\\
 -2 \sqrt{-1}T\, f\sum_{i=1}^{\dim G}\wi{\Psi}_iL_{V_i}
 -2 \sqrt{-1}T\, f\sum_{i=1}^{\dim G}
 \wi{\Psi}_i\,\mu^{S(T\wi{M})\otimes\wi{E}}(V_{i})
 +T^2\left|f\wi{\Psi}^{\wi{M}}\right|^2.
\end{multline}
By  (\ref{3.3c}),  $\wi{\Psi}_i$, $\mu^{S(T\wi{M})\otimes\wi{E}}(V_i)$
are constant on $x_{n}$ on $\wi{M}_{2}$, thus from (\ref{1.11}), there 
exists $C_1>0$ such that the following inequality 
holds:
\begin{align}\label{3.858}
\left\|\sum_{i=1}^{\dim
 G}\wi{\Psi}_iL_{V_i}\right\|+\left\| \sum_{i=1}^{\dim
 G}\wi{\Psi}_i\,\mu^{S(T\wi{M})\otimes\wi{E}}(V_i)\right\|
\leqslant C_1,
 \end{align}
(where the norm in (\ref{3.858}) refers to operators  acting on
 $L^{2}(\wi{M},S(T\wi{M})\otimes \wi{E})^\gamma$).

By (\ref{3.4c}), (\ref{3.857}) and (\ref{3.858}), there exists $C>0$ 
such that for $T>0$,
$s\in \cC_0^\infty(U_{1},
(S(T\wi{M})$ $\otimes\wi{E})|_{\wi{M}_{2}})^\gamma$, we have
\begin{align}\label{3.86c}
  \left\|D^{\wi{E}}_{\wi{M}_{2},Tf} s\right\|^2_0 
= \left\langle \Big(D^{\wi{E}}_{\wi{M}_{2},Tf}\Big)^2 s, s\right\rangle
\geqslant \left\|D^{\wi{E}}_{\wi{M}} s\right\|_0^2
+  T^2 \left\|f |\wi{\Psi}^{\wi{M}}|s\right\|_0^2
 -CT  \|f s\|_0  \| s\|_0  \, .
\end{align}
Thus from (\ref{1.15}),  (\ref{3.3c}), (\ref{3.4c}) and (\ref{3.86c}),
 we see that  there exist $T_1> T_\gamma$, $C_2>0$ such that for
any $T> T_1$, $s\in \cC_0^\infty(U_1,
(S(T\wi{M})\otimes\wi{E})|_{\wi{M}_{2}})^\gamma$, we have
\begin{align}\label{3.855}
  \left\|D^{\wi{E}}_{\wi{M}_{2},Tf} s\right\|^2_0
\geqslant \left\|D^{\wi{E}}_{\wi{M}_{2}} s\right\|_0^2 +C_2  T^2 \|s\|_0^2.
\end{align}

By Green's formula, 
(\ref{3.62}) and (\ref{3.60}) imply that for $s\in
\cC_0^\infty(\wi{M}_{2} , (S(T\wi{M})\otimes
\wi{E})|_{\wi{M}_{2}})$, we have
\begin{multline}\label{3.856}
\left\|D^{\wi{E}}_{\wi{M}_{2},Tf} s \right\|^2_0
=\int_{\wi{M}_{2}}\left\langle s, \left(D^{\wi{E}}_{\wi{M}_{2},Tf}
\right)^2 s\right\rangle dv_{\wi{M}_{2}}
+\int_{\partial\wi{M}_{2}}\left\langle  s,
c(-e_n)D^{\wi{E}}_{\wi{M}_{2},Tf} s\right\rangle  dv_{\partial\wi{M}_{2}}\\
=\int_{\wi{M}_{2}}\left\langle
s,\left(D^{\wi{E}}_{\wi{M}_{2},Tf} \right)^2s\right\rangle
dv_{\wi{M}_{2}}-\int_{\partial\wi{M}_{2}}\left\langle
s,\nabla_{-e_n}^{S(T\wi{M})\otimes\wi{E}} s\right\rangle
dv_{\partial\wi{M}_{2}}\\
-\int_{\partial\wi{M}_{2}}\left\langle
s, D^{\wi{E}}_{\partial\wi{M}_{2},Tf}  s\right\rangle
dv_{\partial\wi{M}_{2}}.
 \end{multline}

By the Lichnerowicz formula (cf. \cite[Appendix D]{LaMi89}), we have
\begin{align}\label{3.8581}
 \left(D^{\wi{E}}_{\wi{M}}\right)^2=-\Delta^{\wi{E}}+\mO(1),
 \end{align}
where $\Delta^{\wi{E}}$ is the Bochner Laplacian, 
and $\mO(1)$ is an endomorphism of $S(T\wi{M})\otimes\wi{E}$. 
By (\ref{3.3c}), the  fiberwise norm of  this endomorphism has 
an uniform upper bound over $\wi{M}$.

By Green's formula, we have, for any
$s\in \cC_0^\infty(\wi{M}_{2} , (S(T\wi{M})\otimes
 \wi{E})|_{\wi{M}_{2}})$,
\begin{align}\label{3.8582}
\int_{\wi{M}_{2}}\left\langle -
\Delta^{\wi{E}} s,s\right\rangle dv_{\wi{M}_{2}}
-\int_{\partial\wi{M}_{2}}\left\langle
s,\nabla_{-e_n}^{S(T\wi{M})\otimes\wi{E}} s\right\rangle
dv_{\partial\wi{M}_{2}}
=\left\|\nabla^{S(T\wi{M})\otimes\wi{E}} s\right\|_0^2 .
 \end{align}
Note that $f=1$ on $\partial \wi{M}_{2}$. By (\ref{1.16c}), 
for any $T>T_{\gamma}$, $s\in\cC_0^\infty(\wi{M}_{2},
(S(T\wi{M})\otimes \wi{E})|_{\wi{M}_{2}})^\gamma$ with
$P^{\wi{M}_{2}}_{\geqslant 0,\pm,Tf}(s|_{\partial\wi{M}_{2}})=0$,
we have
\begin{align}\label{3.8580}
\int_{\partial\wi{M}_{2}}\left\langle s,
D^{\wi{E}}_{\partial\wi{M}_{2},Tf}  s\right\rangle dv_{\partial\wi{M}_{2}}
\leqslant -\sqrt{C_{\gamma}} T 
\left\| s|_{\partial \wi{M}_{2}}\right\|_{\partial \wi{M}_{2},0}^2 
\leqslant 0.
 \end{align}

 As $h_2$ has compact support in $\partial M\times
 (0,2]\subset\wi{M}_{2}$, on which $f\equiv 1$, by (\ref{1.15}),
 (\ref{3.3c}),  (\ref{3.857}), (\ref{3.858}),
 (\ref{3.856})-(\ref{3.8580}),    there exist
 $C_3, C_4, C_{5}>0$ such that for $T> 1$ and
$s\in\cC_0^\infty(\wi{M}_{2},(S(T\wi{M})\otimes
 \wi{E})|_{\wi{M}_{2}})^\gamma$ with
$P^{\wi{M}_{2}}_{\geqslant 0,\pm,Tf}(s|_{\partial\wi{M}_{2}})=0$, we
have
\begin{align}\label{3.859}
\left\|D^{\wi{E}}_{\wi{M}_{2},Tf}(h_2s)\right\|^2_0
\geqslant C_{3} \left\|D^{\wi{E}}_{\wi{M}_{2}}(h_2s)\right\|^2_0
-C_4T\|h_2s\|_0^2+ C_5T^2 \|h_2s\|_0^2.
 \end{align}

 Since $h_1^2+h_2^2\equiv 1$, for any
$s\in\cC_0^\infty(\wi{M}_{2},(S(T\wi{M})\otimes
 \wi{E})|_{\wi{M}_{2}})^\gamma$ with
$P^{\wi{M}_{2}}_{\geqslant 0,\pm,Tf}(s|_{\partial\wi{M}_{2}})=0$, we obtain
\begin{multline}\label{3.860}
\left\|D^{\wi{E}}_{\wi{M}_{2},Tf}s\right\|^2_0=
\left\|h_1D^{\wi{E}}_{\wi{M}_{2},Tf}s\right\|^2_0
+\left\|h_2D^{\wi{E}}_{\wi{M}_{2},Tf}s\right\|^2_0\\
 \geqslant \frac{1}{2}\left\|D^{\wi{E}}_{\wi{M}_{2},Tf}(h_1s)\right\|^2_0
+  \frac{1}{2}\left\|D^{\wi{E}}_{\wi{M}_{2},Tf}(h_2s)\right\|^2_0-
 \|c((dh_1)^*)s\|_0^2- \|c((dh_2)^*)s\|_0^2,
\end{multline}
where $(dh_i)^*\in T\wi{M}_{2}$ is the  dual of $dh_i$
with respect to $g^{T\wi{M}}$.

 By (\ref{3.855}), (\ref{3.859}) and (\ref{3.860}),
 there exist  $C_6, C_{7}>0$ such that for any
$T> T_1> T_\gamma$ and 
$s\in\cC_0^\infty(\wi{M}_{2},(S(T\wi{M})\otimes \wi{E})|_{\wi{M}_{2}})^\gamma$
with $P^{\wi{M}_{2}}_{\geqslant 0,\pm,Tf}(s|_{\partial\wi{M}_{2}})=0$,
we have
\begin{align}\label{3.861}
\left\|D^{\wi{E}}_{\wi{M}_{2},Tf}s\right\|^2_0
\geqslant \frac{1}{2}\left\|D^{\wi{E}}_{\wi{M}}(h_1s)\right\|^2_0
+ \frac{C_{3}}{2}\left\|D^{\wi{E}}_{\wi{M}_{2}}(h_2s)\right\|^2_0
-C_6 T\|s\|_0^2+C_7 T^2\|s\|_0^2.
 \end{align}
 By $D^{\wi{E}}_{\wi{M}_{2}}(h_is)= h_{i}D^{\wi{E}}_{\wi{M}_{2}}s +
 c((dh_{i})^{*})s$, $h_1^2+h_2^2\equiv 1$ and (\ref{3.861}), there exist
$T_2> T_\gamma$, $C_8, C_{9}>0$ such that
for $T> T_2$,
 $s\in\cC_0^\infty(\wi{M}_{2},(S(T\wi{M})\otimes
 \wi{E})|_{\wi{M}_{2}})^\gamma$ with
$P^{\wi{M}_{2}}_{\geqslant 0,\pm,Tf}(s|_{\partial\wi{M}_{2}})=0$,
the following holds:
\begin{align}\label{3.862}
\left\|D^{\wi{E}}_{\wi{M}_{2},Tf}s\right\|_0^2
\geqslant  C_{8}\| D^{\wi{E}}_{\wi{M}_{2}} s\|_{0}^2+ C_9 T^2 \|s\|_0^2 .
 \end{align}
 \comment{

}
 By Proposition \ref{t1.2},
 (\ref{a1.10}) and (\ref{3.862}), we get
Lemma \ref{t1.7}.
\end{proof}

 By (\ref{3.854}) and Lemma \ref{t1.7}, 
the proof of Theorem \ref{t1.3} is completed.

\section{Quantization   for proper moment maps:
 proof of Theorem \ref{t0.1}}\label{s1.6}

The purpose of this section is to give a proof of Theorem \ref{t0.1}.
This proof
consists of two steps. In a first step, we reduce Theorem
\ref{t0.1} to a vanishing result for the transversal index and
then use Theorem \ref{t1.3} to  interpret the later as a vanishing
result for the APS type index. In a second step, we apply
the analytic localization method developed in \cite{BL91},
\cite{TZ98} and \cite{TZ99} to prove the vanishing of this APS
type index.

We use the assumptions and the notation in the Introduction. 
Also, for any real one form
$\upsilon$ on a Riemannian manifold, we denote by $\upsilon^*$ the
corresponding vector field on this manifold.

Recall that $(M,\om, J^M)$ is a noncompact symplectic manifold with
 a compatible almost-complex structure $J^M$, and $g^{TM}=
 \omega(\cdot, J^M \cdot)$ is the associated Riemannian metric on $M$.
We have the canonical splitting
$TM\otimes_{\R}\C= T^{(1,0)}M \oplus T^{(0,1)}M $, for
the complexification of $TM$, with
\begin{align}\label{a1.19}
    \begin{split}
    T^{(1,0)}M =& \{u\in TM\otimes_{\R}\C: J^M u = \sqrt{-1} u\},\\
    T^{(0,1)}M =& \{u\in TM\otimes_{\R}\C: J^M u = -\sqrt{-1} u\}.
    \end{split}
\end{align}
Let $T^{*(0,1)}M $ be the dual of $T^{(0,1)}M $.

The almost complex structure $J^M$ on $TM$ determines a canonical
spin$^c$-structure on $TM$ with the associated Hermitian line
bundle $\det(T^{(1,0)}M)$. Moreover,  we have
\begin{align}\label{1.19}
S(TM)=\Lambda\left(T^{*(0,1)}M\right),\ \ \
S_\pm(TM)=\Lambda^{\frac{\rm even}{\rm odd}}\left(T^{*(0,1)}M\right).
\end{align}
For any $W\in TM$, we write 
$W=w+\ov{w}\in T^{(1,0)}M \oplus T^{(0,1)}M $.
Let ${w}^*\in T^{*(0,1)}M$ correspond to $w$ so that
$({w}^*, \ov{u})= g^{TM}(w, \ov{u})$ for any $\ov{u}\in T^{(0,1)}M$.
Then
\begin{align}\label{a1.20}
c(W)= \sqrt{2}( {w}^*\wedge - i_{\ov{w}})
\end{align}
defines the Clifford action of $W$ on
$\Lambda(T^{*(0,1)}M)$. It interchanges $\Lambda^{{\rm
even}}(T^{*(0,1)}M)$ and $\Lambda^{{\rm odd}}(T^{*(0,1)}M)$.
The Levi-Civita connection $\nabla^{TM}$ together with the almost
complex structure $J$ induces by projection a canonical Hermitian
connection $\nabla^{T^{(1,0)}M}$ on $T^{(1,0)}M$. This induces a
Hermitian connection $\nabla^{\det}$ on $\det(T^{(1,0)}M)$. The
Clifford connection $\nabla^{\Lambda\left(T^{*(0,1)}M\right)}$ on
$\Lambda\left(T^{*(0,1)}M\right)$ is induced by the Levi-Civita
connection $\nabla^{TM}$ and the connection $\nabla^{\det}$ (cf.
 \cite[Appendix D]{LaMi89},  \cite[\S 1.3]{MM07} and \cite[\S 1a)]{TZ98}).

We take $E=L$, with $L$ the prequantum line bundle on $M$ in
the Introduction, and denote by $\Omega^{0,\bullet}(M,L)=\cC^\infty(M,
\Lambda (T^{*(0,1)}M)\otimes L)$.
Let $D^L_{M}$ be the corresponding Dirac operator defined as in (\ref{1.4}).

Recall that 
the moment map $\mu:M\rightarrow \kg^*$ is assumed to be proper.
Let $X^{\mathcal H}$ be the Hamiltonian vector field of
${\mathcal H}=|\mu|^2$, i.e.,
\begin{align}\label{1.20a}
 i_{X^{\mathcal H}}\omega = d \mathcal H .
\end{align}
By (\ref{0.2}), (\ref{0.4}), \eqref{1.11} and (\ref{1.20a}),  we find (cf.
\cite[(1.19)]{TZ98}),
\begin{align}\label{1.22}
X^{\mathcal H}=-J^M(d{\mathcal H})^*=-2J^M\sum_{i=1}^{\dim G}\mu_i\left(
d\mu_i\right)^*=2\sum_{i=1}^{\dim G}\mu_iV_i^M=2\,\mu^M.
\end{align}

For any regular value $a>0$ of ${\mathcal H}=|\mu|^2$, denote by
$M_a$ the compact $G$-manifold  with boundary defined by
\begin{align}\label{1.20}
M_a=\left\{x\in M: {\mathcal H}(x)\leqslant a\right\}.
\end{align}
By (\ref{1.22}), $\mu^M$
does not vanish on the boundary $\partial M_a={\mathcal
H}^{-1}(a)$ of $M_a$.

Let $a'>a>0$ be two regular values of ${\mathcal H}$. Let
$M_{a,a'}$ denote the compact $G$-manifold with boundary
$M_{a,a'}=\overline{M_{a'}\setminus M_a}$.
By the additivity of the transversal index
(cf. \cite[Theorem 3.7, \S 6]{A74} and \cite[Prop. 4.1]{Par01}),
we have for  $\gamma\in\Lambda_+^*$,
\begin{align}\label{1.24}
\Ind \left(\sigma_{L,\mu}^{M_{a'}}\right)_{\gamma}
-\Ind \left(\sigma_{L,\mu}^{M_a}\right)_{\gamma}
=\Ind \left(\sigma_{L,\mu}^{M_{a,a'}}\right)_{\gamma}.
\end{align}



Let ${\rm Cas}_G= -\sum_{i} V_i \, V_i$ be the Casimir operator
associated with $G$.
Let $c_{\gamma}\geqslant 0$ be defined by the following formula,
\begin{align}\label{1.23x}
{\rm Cas}_G|_{V_\gamma^G}= c_\gamma \Id_{V_\gamma^G}.
\end{align}
Clearly, $c_{\gamma=0}=0$. 
As ${\rm Cas}_G|_{V_\gamma^G}= - \sum_i L_{V_i}(\gamma)^2$,
from (\ref{3.858}) and (\ref{1.23x}), we get
\begin{align}\label{1.24x}
c_\gamma=\Big\| \sum_{i=1}^{\dim G}L_{V_i}(\gamma)^2\Big\|.
\end{align}

By Theorem \ref{t1.3}, (\ref{1.24}) and (\ref{1.24x}),
the following result is a
reformulation of Theorem \ref{t0.1}, with a more precise form of
the bound $a_\gamma$.
\begin{thm}\label{t1.5} Fix $\gamma\in \Lambda^*_+$.
Then for any regular values
$a', a$ of ${\mathcal H}$ with $a'>a>\frac{ c_\gamma}{ 4\pi^2}$, the
following identity holds:
\begin{align}\label{1.23}
 Q_{APS}^{M_{a,a'}}\left(L,\mu\right)_{\gamma} =0.
\end{align}
\end{thm}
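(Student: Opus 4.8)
The plan is to prove the vanishing of the APS index $Q_{APS}^{M_{a,a'}}(L,\mu)_\gamma$ directly by deriving a Bochner-type estimate on the compact manifold with boundary $M_{a,a'}$ that shows the operator $(D^L_{M_{a,a'},+,T}(\gamma), P_{\geqslant 0,+,T}(\gamma))$ has trivial kernel (and cokernel) for $T$ large, once $a > c_\gamma/(4\pi^2)$. By Definition \ref{t11.3}, the index equals $Q_{APS,T}^{M_{a,a'}}(L,\mu)_\gamma$ for all $T > T_\gamma$, so it suffices to show both kernels vanish for some large $T$. The key point is that on $M_{a,a'}$ the vector field $\mu^M = \frac12 X^{\mathcal H}$ never vanishes (since $a, a'$ are regular values of $\mathcal H = |\mu|^2$ and $X^{\mathcal H} = 2\mu^M$ by \eqref{1.22}), so the deformation term $\sqrt{-1}\,T\,c(\mu^M)$ contributes a positive $T^2|\mu^M|^2$ to the square of the deformed operator, exactly as in Proposition \ref{t1.2}.

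First I would compute $(D^L_{M_{a,a'},T})^2$ via the Lichnerowicz formula applied to $D^L_{M_{a,a'}}$ together with the cross terms from $\sqrt{-1}\,T\,c(\mu^M)$, obtaining schematically
\begin{align*}
\left(D^L_{M_{a,a'},T}\right)^2 = \left(D^L_{M_{a,a'}}\right)^2
+ \sqrt{-1}\,T\sum_{i=1}^{n} c(e_i)c\!\left(\nabla^{TM}_{e_i}\mu^M\right)
- 2\sqrt{-1}\,T\,\nabla^{\Lambda(T^{*(0,1)}M)\otimes L}_{\mu^M} + T^2|\mu^M|^2.
\end{align*}
The term $\nabla_{\mu^M}$ is then split as in \eqref{1.14} into $\sum_i \mu_i L_{V_i}$ plus the bounded moment-type endomorphism $\sum_i \mu_i\, \mu^{\Lambda(T^{*(0,1)}M)\otimes L}(V_i)$. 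On the $\gamma$-component, the dangerous piece is $-2\sqrt{-1}\,T\sum_i \mu_i L_{V_i}(\gamma)$; crucially $\mathcal H = |\mu|^2 = \sum_i \mu_i^2$, so a Cauchy–Schwarz / operator-norm estimate using \eqref{1.24x} bounds this by (roughly) $2T\sqrt{\mathcal H}\cdot\sqrt{c_\gamma}$ in the appropriate sense, while the Kostant formula \eqref{0.3} ties the endomorphism contributions to $2\pi|\mu|^2$. Combining with $T^2|\mu^M|^2$ and using the identity $|\mu^M|^2 = $ (up to the metric identification) a quantity comparable to $4\pi^2|\mu|^2 \cdot$(something), one arrives at a lower bound of the form $\|D^L_{M_{a,a'},T}s\|_0^2 \geqslant (4\pi^2 a - c_\gamma - o(1))\,T\,\|s\|_0^2 + \cdots$ on the $\gamma$-component, which is strictly positive precisely when $a > c_\gamma/(4\pi^2)$ and $T$ is large. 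This is the heart of the matter and explains the explicit bound in the theorem statement.

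Next I would handle the boundary. By Proposition \ref{t1.2}, the induced boundary operator $D^L_{\partial M_{a,a'},T}(\gamma)$ is invertible for $T > T_\gamma$, and the APS boundary condition together with Green's formula (as in \eqref{3.8580}) ensures the boundary integral $\int_{\partial M_{a,a'}}\langle s, D^L_{\partial M_{a,a'},Tf} s\rangle$ has a favorable sign, so it does not spoil the interior estimate. One must track both boundary components $\partial M_a$ and $\partial M_{a'}$ with their induced orientations; the sign bookkeeping is routine given \eqref{a1.61} and the product-structure reductions already performed in Section \ref{s1.5}. Assembling the interior Bochner estimate with the boundary term then forces $\Ker(D^L_{M_{a,a'},+,T}(\gamma), P_{\geqslant 0,+,T}(\gamma)) = 0$, and the analogous argument (or the adjoint relation) gives vanishing of the cokernel, hence $Q_{APS,T}^{M_{a,a'}}(L,\mu)_\gamma = 0$; by Definition \ref{t11.3} this equals $Q_{APS}^{M_{a,a'}}(L,\mu)_\gamma$.

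The main obstacle I anticipate is getting the constant right in the interior estimate — i.e., showing that the cross term involving $\sum_i \mu_i L_{V_i}(\gamma)$ is controlled by $c_\gamma$ with the precise factor $4\pi^2$, rather than merely by \emph{some} constant. This requires carefully relating $|\mu^M|^2$ to $|\mu|^2$ via the moment map identity \eqref{0.4} and the compatibility \eqref{0.2} (so that $\langle \mu^M, \mu^M\rangle$ picks up the $2\pi$ factors from \eqref{0.1} and \eqref{0.3}), and then applying the spectral bound $\|\sum_i L_{V_i}(\gamma)^2\| = c_\gamma$ from \eqref{1.24x} together with Cauchy–Schwarz in a way that matches the $T^2\mathcal H$ term. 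The case $\gamma = 0$ (where $c_\gamma = 0$) is the cleanest and recovers the vanishing already established in \cite[Theorems 2.6, 4.3]{TZ99}; the general $\gamma$ case is a quantitative refinement of that argument, and I expect the bulk of the work to be the bookkeeping of these constants rather than any new conceptual input.
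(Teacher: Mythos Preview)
Your proposal contains a genuine gap that undermines the entire argument. You assert that ``on $M_{a,a'}$ the vector field $\mu^M = \tfrac{1}{2}X^{\mathcal H}$ never vanishes (since $a, a'$ are regular values of $\mathcal H$).'' This is false: regularity of $a$ and $a'$ only guarantees that $X^{\mathcal H}$ is nonzero on the \emph{level sets} $\mathcal{H}^{-1}(a)$ and $\mathcal{H}^{-1}(a')$, i.e.\ on $\partial M_{a,a'}$. The interior of $M_{a,a'}$ can (and in general will) contain critical points of $\mathcal H$, where $\mu^M$ vanishes and the term $T^2|\mu^M|^2$ contributes nothing. Your proposed lower bound therefore collapses precisely at such points, and no amount of bookkeeping of constants will repair this. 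Relatedly, your suggested relation ``$|\mu^M|^2 \sim 4\pi^2|\mu|^2\cdot(\text{something})$'' does not exist: $|\mu^M|^2$ is the norm of a tangent vector and can vanish while $|\mu|^2 = \mathcal H \geqslant a > 0$.

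The paper handles exactly this difficulty. It splits $M_{a,a'}$ into a neighborhood $\mathcal U$ of the boundary (where your argument essentially works, cf.\ \eqref{a4.30}) and an interior piece $\mathcal U'$. On $\mathcal U'$ the positivity does \emph{not} come from $T^2|\mu^M|^2$; instead the paper exploits the term $4\pi T\mathcal H$ in the exact Bochner formula \eqref{1.27} from \cite{TZ98}, which is present even at critical points of $\mathcal H$. One separates out an auxiliary operator $G^L_{T,\varepsilon}$ (see \eqref{a4.30a}) and invokes the local harmonic-oscillator analysis of \cite[Proposition~2.2]{TZ98} near each interior critical point to obtain \eqref{1.316}. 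The sharp threshold $a > c_\gamma/(4\pi^2)$ then arises from Lemma~\ref{t1.6}: a Cauchy--Schwarz estimate balances $(4\pi-\varepsilon)\mathcal H$ against $2\sqrt{-1}\sum_i\mu_i L_{V_i}$ on the $\gamma$-component, using $\mathcal H \geqslant a$ and $\|\sum_i L_{V_i}(\gamma)^2\| = c_\gamma$. The two regions are finally glued via a partition of unity. In short, the missing idea in your proposal is the treatment of interior critical points of $\mathcal H$, and the source of the constant $4\pi^2$ is the term $4\pi T\mathcal H$ in \eqref{1.27}, not any comparison between $|\mu^M|$ and $|\mu|$.
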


\begin{proof} If $\gamma=0$,  (\ref{1.23})
    has been proved in \cite[Theorems 2.6, 4.3]{TZ99}.
The proof for general $\gamma\in \Lambda^*_+$ is a 
  modification of the proof of \cite[Theorem 2.6]{TZ99}
where it is assumed that $\gamma=0$.
Let $\gamma\in \Lambda^*_+$ and $a'>a>\frac{c_\gamma}{ 4\pi^2}$ be
fixed.

By (\ref{1.22}), (\ref{1.8}) becomes  in the current situation
 (cf. \cite[(1.20)]{TZ98} and \cite[(1.19)]{TZ99}),
\begin{align}\label{1.26}
 D^L_{M,T}=D^L_{M} +\frac{\sqrt{-1}T}{2} c\left(X^{\mathcal H}\right):
 \Omega^{0,\bullet}\left(M_{a,a'},L\right) \rightarrow \Omega^{0,\bullet}
 \left(M_{a,a'},L\right).
\end{align}
Let $e_1,\cdots,e_{n}$ be an oriented orthonormal frame of $TM_{a,a'}$.
By \cite[Theorem 1.6]{TZ98}, the following 
formula holds:
\begin{multline}\label{1.27}
 \left(D^L_{M,T}\right)^2=\left(D^L_{M}\right)^2 +
 \frac{\sqrt{-1}T}{ 4}\sum_{j=1}^{n}c\left(e_j\right)
 c\left(\nabla^{TM}_{e_j}X^{\mathcal H}\right) \\
   - \frac{\sqrt{-1}T}{2} {\rm Tr}\left[\nabla^{TM}
 X^{\mathcal H}|_{T^{(1,0)}M}\right]
 +\frac{T}{ 2}\sum_{i=1}^{\dim G}
 \left(\sqrt{-1}c\left(J^MV_i^M\right)c\left(V_i^M\right)
 +\left|V_i^M\right|^2\right)\\
 +4\pi T{\mathcal H}-2\sqrt{-1}T\sum_{i=1}^{\dim
 G}\mu_iL_{V_i }+\frac{T^2}{ 4}\left|X^{\mathcal H}\right|^2.
\end{multline}

\comment{
 Since $X^{\mathcal H}$ does not vanish on $\partial M_{a,a'}$,
(\ref{1.16c}) holds on $\partial M_{a,a'}$
(Compare with \cite[Theorem 2.1]{TZ99}).
By proceeding in exactly the same way as in \cite[Proof of
Proposition 2.4]{TZ99},  we obtain a $G$-invariant open neighborhood
$\mU$ of
 $\partial  M_{a,a'}$ in $M_{a,a'}$ and $T_1>0$, $C_1>0$
(depending on $a,a'$)
such that for any $T> T_1$ and
 $s\in \Omega^{0,\bullet}(M_{a,a'},L )^\gamma$
 with $\supp(s)\subset \mU$ and
 $P_{\geqslant 0, \pm, T} (s|_{\partial  M_{a,a'}})=0$,
 the following inequality holds:
 \begin{align}\label{a4.30}
\left \|D^L_{M,T}  s\right\|^2_{0}\geqslant C_1 \left(\left\| D^L_{M}
s\right\|^2_{0}
 + T^2 \|s\|_{0}^2 \right).
 \end{align}

Take $x\in M_{a,a'}\setminus \partial  M_{a,a'}$.

We assume $X^{\mathcal H}(x)=0$ first.
For any $\varepsilon >0$, set
\begin{align}\label{a4.30a}
G_{T,\varepsilon}^L=\left(D^L_{M,T}\right)^2-(4\pi-\varepsilon)
T{\mathcal H}+2\sqrt{-1}T\sum_{i=1}^{\dim
G}\mu_iL_{V_i}.
\end{align}
\comment{ Let $f_1,\,\cdots,\,f_n$ be an orthonormal frame of
 $T_xM_{a,a'}$. Let $y=(y^1,\,\cdots,\, y^n)$ be the normal
 coordinate system with respect to $\{f_j\}_{j=1}^n$ near $x$.
 Clearly, one can choose $f_1,\,\cdots,\,f_n$ so that ${\mathcal
 H}=|\mu|^2$ can be expressed near $x$ by
\begin{align}\label{a4.30a1}
 {\mathcal H}(y)={\mathcal H}(x)+\sum_{j=1}^na_j\left(y^j\right)^2
 +O\left(|y|^3\right),
 \end{align}
 where the $a_j$'s may possibly be zero.

 By \cite[4.18, 4.20]{Kirwan84} and \cite[pp. 549]{Kirwan84b}, one
 knows that at least one of the above $a_j$'s is negative.}
Clearly, $G_{T,\varepsilon}^L$ is of the same form as $F_T^L$ in
\cite[(2.6)]{TZ98}, with $4\pi$ in \cite[(2.6)]{TZ98} being replaced
by $\varepsilon>0$.

 From \cite[Proposition 2.2, Case 2]{TZ98}
(with $4\pi$ in \cite[(2.26)]{TZ98} being
replaced by $\varepsilon>0$), (\ref{1.27}) and (\ref{a4.30a}),
 we see that   there exist an open neighborhood $U_x\subset M_{a,a'}$
 of $x$ and   $C_x>0$, $b_x>0$ such that for any $T> 1$ and $s\in
 \Omega^{0,\bullet}(M_{a,a'},L )$ with $\supp (s)\subset U_x$,
 we have
\begin{align}\label{1.31}
  \left\langle G_{T,\varepsilon}^Ls,s\right\rangle  \geqslant C_x
  \left(\left\|D^L_{M}s\right\|^2_{0}+ \left(T-b_x\right)  \|s\|^2_{0}\right).
\end{align}

If $X^{\mathcal H}(x)\neq 0$, by (\ref{1.27}),
(\ref{1.31}) also holds (cf. \cite[Proposition 2.2, Case 1]{TZ98}).

On the other hand, let $\mU'\subset M_{a,a'}$ be a $G$-invariant
open subset of $M_{a,a'}$ such that $\overline{\mU'}\cap(\partial
M_{a,a'})=\emptyset$, $\mU\cup\mU'=  M_{a,a'}$.

By using the gluing trick  in \cite[pp. 115-116]{BL91}, which has
been adapted in \cite[\S 2c)]{TZ98} to prove \cite[Theorem
2.1]{TZ98} (cf. also (\ref{3.860})-(\ref{3.862})),  
we obtain the existence of 
}

Let $\mU$ be a $G$-invariant open neighborhood of
 $\partial  M_{a,a'}$ in $M_{a,a'}$ such that 
 $X^{\mathcal H}$ does not vanish on $\ov{\mU}$.
Since $X^{\mathcal H}$ does not vanish on $\partial M_{a,a'}$,
the existence of $\mU$ is clear.
Let $\mU'$ be a $G$-invariant
open subset of $M_{a,a'}$ such that $\overline{\mU'}\cap(\partial
M_{a,a'})=\emptyset$, $\mU\cup\mU'=  M_{a,a'}$.

By using that $L_{V_i}$ acts as a bounded operator on 
$L^2( M_{a,a'}, \Lambda (T^{*(0,1)}M)\otimes L)^{\gamma}$
and (\ref{1.16c}) instead of \cite[Theorem 2.1]{TZ99}, 
then by  proceeding in exactly the same way as in 
\cite[Proof of Proposition 2.4]{TZ99}, we know that 
there exist $T_1>0$, $C_1>0$ 
(depending on $\mU$ and $\gamma$) such that for any $T> T_1$ and
 $s\in \Omega^{0,\bullet}(M_{a,a'},L )^\gamma$
 with $\supp(s)\subset \mU$ and
 $P_{\geqslant 0, \pm, T} (s|_{\partial  M_{a,a'}})=0$,
 the following inequality holds:
 \begin{align}\label{a4.30}
\left \|D^L_{M,T}  s\right\|^2_{0}\geqslant C_1 \left(\left\| D^L_{M}
s\right\|^2_{0}
 + T^2 \|s\|_{0}^2 \right).
 \end{align}

For any $\varepsilon >0$, set
\begin{align}\label{a4.30a}
G_{T,\varepsilon}^L=\left(D^L_{M,T}\right)^2-(4\pi-\varepsilon)
T{\mathcal H}+2\sqrt{-1}T\sum_{i=1}^{\dim
G}\mu_iL_{V_i}.
\end{align}
Clearly, $G_{T,\varepsilon}^L$ is of the same form as $F_T^L$ in
\cite[(2.6)]{TZ98}, with $4\pi T{\mathcal H}$ in \cite[(2.6)]{TZ98} 
being replaced by $\varepsilon T{\mathcal H}$.

By replacing $4\pi {\mathcal H}$ in \cite[(2.26)]{TZ98} 
 by $\varepsilon {\mathcal H}$ in 
the proof of \cite[Proposition 2.2, Case 2]{TZ98}, 
from  (\ref{1.27}) and (\ref{a4.30a}), we know the analogue of 
\cite[Proposition 2.2]{TZ98} holds for the operator 
$G_{T,\varepsilon}^L$: 
for any $x\in M_{a,a'}\setminus \partial M_{a,a'}$, there exist an open 
neighborhood $U_x\subset M_{a,a'}\setminus \partial M_{a,a'}$
 of $x$ and   $C_x>0$, $b_x>0$ such that for any $T> 1$ and $s\in
 \Omega^{0,\bullet}(M_{a,a'},L )$ with $\supp (s)\subset U_x$,
 we have
\begin{align}\label{1.31}
  \left\langle G_{T,\varepsilon}^Ls,s\right\rangle  \geqslant C_x
  \left(\left\|D^L_{M}s\right\|^2_{0}+ \left(T-b_x\right)  \|s\|^2_{0}\right).
\end{align}
From \eqref{1.31},  as explained in  \cite[\S 2c)]{TZ98}, there exist
$C_2>0$, $b_1>0$ such that
for any $T> 1$ and $s\in \Omega^{0,\bullet}(M_{a,a'},L)$
with $\supp (s)\subset \mU' $, we have
\begin{align}\label{1.316}
  \left\langle G_{T,\varepsilon}^Ls,s\right\rangle  \geqslant C_2
  \left(\left\|D^L_{M}s\right\|^2_{0}+ \left(T-b_1\right)  \|s\|^2_{0}\right) .
\end{align}

\begin{lemma}\label{t1.6} There exists  $0<\varepsilon<4\pi$ such that for
 any $s\in\Omega^{0,\bullet}(M_{a,a'},L)^\gamma$, one has
\begin{align}\label{1.29}
  \left\langle\Big((4\pi-\varepsilon)
 {\mathcal H}-2\sqrt{-1} \sum_{i=1}^{\dim
 G}\mu_iL_{V_i }\Big)s,s\right\rangle \geqslant 0.
\end{align}
\end{lemma}

\begin{proof}  Since $a'>a> \frac{c_\gamma}{4\pi^2}$, there exists
$\varepsilon\in(0,4\pi)$ such that the following inequality holds
on $M_{a,a'}$:
\begin{align}\label{1.298}
{\mathcal H}\geqslant \frac{4c_\gamma}{(4\pi-\varepsilon)^2}   .
\end{align}
By the Cauchy inequality and (\ref{1.24x}), we
have that for any $s\in\Omega^{0,\bullet}(M_{a,a'},L)^\gamma$,
\begin{align}\label{1.30}
\begin{split}
    \Big\vert  \Big\langle \sum_{i=1}^{\dim G}
    \mu_{i} L_{V_i } s, s\Big\rangle\Big\vert
& \leqslant \frac{1}{2}\sum_{i=1}^{\dim G}
 \left(\frac{4\pi-\varepsilon}{ 2 } \left\Vert  \mu_{i}s\right\Vert^2_{0}
+\frac{2 }{ 4\pi-\varepsilon}\left\|L_{V_i}s\right\|^2_{0}\right)\\
 &= \frac{4\pi-\varepsilon}{4}  \left\langle {\mathcal H}s,   s
 \right\rangle
 +\frac{c_\gamma}{ 4\pi-\varepsilon}
\left\|   s \right\|^2_{0}.
     \end{split}\end{align}
From (\ref{1.298}) and (\ref{1.30}), we obtain for any
$s\in\Omega^{0,\bullet}(M_{a,a'},L)^\gamma$,
\begin{align}\label{1.299}
 \left\langle\Big((4\pi-\varepsilon){\mathcal H}
-2\sqrt{-1} \sum_{i=1}^{\dim G}\mu_iL_{V_i }\Big)s,s\right\rangle
\geqslant  \left\langle\left( \frac{4\pi-\varepsilon}{2} {\mathcal H}
- \frac{ 2 c_\gamma}{4\pi-\varepsilon} \right)s,s\right\rangle \geqslant 0.
\end{align}

The proof of Lemma \ref{t1.6} is completed.
\end{proof}

Let $\varepsilon>0$ be fixed as in Lemma \ref{t1.6}.
By Lemma \ref{t1.6}, (\ref{a4.30a}) and (\ref{1.316}),  we have
that for any $T> 1$ and $s\in
\Omega^{0,\bullet}(M_{a,a'},L )^\gamma$ with $\supp (s)\subset
\mU' $,
\begin{align}\label{1.3167}
  \left\|D^L_{M,T}s\right\|^2_{0}
=\left\langle \left(D^L_{M,T}\right)^2s,s\right\rangle 
\geqslant  \left\langle G_{T,\varepsilon}^L s,s\right\rangle  \geqslant C_2
  \left(\left\|D^L_{M}s\right\|^2_{0}+ \left(T-b_1\right)  \|s\|^2_{0}\right) .
\end{align}

Let $h_1$, $h_2$ be two smooth
$G$-invariant functions on $M_{a,a'}$ such that $h_1^2$, $h^2_2$
forms a partition of unity associated with the $G$-invariant open covering 
$\mU'$, $\mU$ of $M_{a,a'}$.\footnote{
We can take $h_2$ a radical function with respect to $|\mu|^2$
near $\partial M_{a,a'}$ as in \eqref{3.843}, then  $h_1$, $h_2$
are automatically $G$-invariant.}

Let $s\in \Omega^{0,\bullet}(M_{a,a'},L )^\gamma$ with
$P_{\geqslant 0, \pm, T} (s|_{\partial  M_{a,a'}})=0$.
Clearly, $h_1s$, $h_2s$ still belong to
$\Omega^{0,\bullet}(M_{a,a'},L )^\gamma$ with ${\rm
supp}(h_2s)\subset \mU $ and $P_{\geqslant 0, \pm, T}
((h_2s)|_{\partial  M_{a,a'}})=0$, while $\supp (h_1s)\subset \mU'$.
By applying (\ref{a4.30}) to $h_{2}s$, (\ref{1.3167}) to $h_{1}s$,
and by proceeding as in
 (\ref{3.860})-(\ref{3.862}) (cf. \cite[pp. 115-116]{BL91}),
 we obtain constants $C_3>0$, $b_2>0$ such that for any $T> T_1$ and
 $s\in \Omega^{0,\bullet}(M_{a,a'},L )^\gamma$
 with $P_{\geqslant 0, \pm, T} (s|_{\partial  M_{a,a'}})=0$,
 the following inequality holds:
 \begin{align}\label{1.3168}
  \left\langle \left(D^L_{M,T}\right)^2s,s\right\rangle  \geqslant C_3
  \left(\left\|D^L_{M}s\right\|^2_{0}+ \left(T-b_2\right)  \|s\|^2_{0}\right) .
\end{align}
By Proposition \ref{t1.2}, (\ref{1.22}), (\ref{1.26}) and (\ref{1.3168}), 
we have $Q_{APS,T}^{M_{a,a'}}(L,\mu)_{\gamma}=0$ 
for $T>0$ large enough.
Combining this with Definition \ref{t11.3}, we get Theorem \ref{t1.5}.

By Theorems \ref{t1.3}, \ref{t1.5} and (\ref{1.23x}),
we get Theorem \ref{t0.1}.
\end{proof}


\section{A vanishing result for the APS index}\label{s33}

In this section, we prove 
the vanishing result (\ref{0.103}). 


This section is organized as follows. In Section \ref{s33.1}, we
state 
(\ref{0.103}) as a vanishing theorem 
on the APS index, Theorem \ref{t33.2}. 
In Section \ref{s33.2}, we construct a suitable function 
$\psi:M\times N\to\kg$ which is homotopy to the function $Y$ 
in Theorem  \ref{t33.2} such that the associated operator 
with the APS boundary condition is invertible.
In Section \ref{s33.3}, we prove the invertibility of 
the operator associated to $\psi$, Theorem \ref{ta2.3} 
up to a pointwise estimate, Lemma \ref{t33.6}.
In Sections \ref{s33.4}-\ref{sa.6}, we prove  Lemma \ref{t33.6}.

We make the same assumptions and we use the same notation as in
the Introduction and in Section \ref{s1.6}.

\subsection{A vanishing theorem for the APS index}\label{s33.1}

For convenience, we recall the basic setting.
Let $(M,\omega)$, $(N, \omega^N)$  be two symplectic manifolds with
symplectic forms $\omega,\ \omega^N$, and $\dim M=n$. 
We assume  that $M$ is noncompact and that $N$ is compact.

Let $J^M,\ J^N$ be  almost complex structures on $TM,\ TN$
  such that $\om(\cdot, J^M\cdot)$ defines a metric
$g^{TM}$ on $TM$, and $\omega^N(\cdot, J^N\cdot)$ defines a metric
$g^{TN}$ on $TN$.
Let $(L, h^L, \nabla^L)$ be a prequantum line bundle on $(M,\omega)$, 
and let $(F,h^F, \nabla^F)$ be a prequantum line bundle on $(N,\omega^N)$ 
(cf. \eqref{0.1}).
\comment{
Let $(L, h^L)$ be a Hermitian line bundle on $M$ with Hermitian
connection $\nabla^L$, and $(F,h^F)$ a Hermitian line bundle on $N$ with
Hermitian connection $\nabla^F$. Let $R^L= (\nabla^L)^2$, $R^F=
(\nabla^F)^2$ be the associated curvatures. We assume that
\begin{align}\label{2.1}
\omega= \frac{\sqrt{-1}}{2\pi} R^L, \quad \omega^N=
\frac{\sqrt{-1}}{2\pi} R^F.\end{align}
}


 Suppose that $G$ acts (on the left) on $M,\ N$ and 
its actions on $M,\ N$ lift to $L$
and $F$. Moreover, we assume that these $G$-actions
preserve the above metrics and
the connections   on $TM,\ TN,\ L,\ F$ and $J^M,\ J^N$.

Let the moment map $\mu:M\rightarrow \kg$ be
 defined as in (\ref{0.3}). Let $\eta:N\rightarrow \kg$ be the moment
 map defined in the same way for $(N,\omega^N)$ and $(F,h^F, \nabla^F)$.

We will keep 
the same notation for the natural lifts of the objects
on $M$, $N$ to $M\times N$. In particular, $L\otimes F$ is the
Hermitian line bundle on $M\times N$ induced by $L$ and $F$ with the
Hermitian connection $\nabla^{L\otimes F}$ induced by $\nabla^L,\ \nabla^F$.

The $G$-action on $M\times N$ is defined by $g\cdot (x,y)= (gx,gy)$
 for $(x,y)\in M\times  N$.  We define the symplectic form $\Omega$ and
 the almost complex structure $J$ on $M\times N$ by
 \begin{align}\label{2.2}
\Omega(x,y)= \omega(x)+ \omega^N(y),\quad J= (J^M, J^N).
\end{align}
The induced moment map $\theta: M\times N\to \kg$ is given by
 \begin{align}\label{2.3}
\theta(x,y)= \mu(x)+\eta(y).
\end{align}
Since $\mu : M \to \kg$ is proper,
 $\theta:M\times N\to \kg$ is also proper.

For $A>0$, set
\begin{align}\label{2.12}
 \begin{split}
 \mM_1=& \{(x,y)\in M\times N: |\mu(x)|^2= A\}=\partial M_A\times N,\\
\mM_2=& \{(x,y)\in M\times N: |\theta(x,y)|^2= 2A\},\\
\mM =& \{(x,y)\in M\times N: |\mu(x)|^2\geqslant A,
|\theta(x,y)|^2\leqslant 2A\} \subset  M\times N,
 \end{split}\end{align}
where $\partial M_A$ is the boundary of $M_A $ defined in (\ref{1.20}).
As  $\mu,\theta$ are proper and $M$ is noncompact, 
$|\mu(M)|^{2}$, $|\theta(M\times N)|^{2}$ 
contain  a half line of $\R$, thus for $A$ large enough, 
$\mM_1$, $\mM_2$ are nonempty.

\begin{rem} \label{t33.1} 
Since $N$ is a compact manifold, there exists $C_0>0$ such that
\begin{align}\label{33.0}
 |\eta|\leqslant C_0\ \ \ \  \ {\rm on}\  \ N.
\end{align}
By (\ref{2.3}) and (\ref{33.0}),  we have $|\theta|\leqslant |\mu|+C_0$.
Set  $A_0=\Big(\frac{C_0}{ \sqrt{2}-\sqrt{5/3}}\Big)^2$.
By (\ref{2.12}), for  $A> A_0$, 
we have
\begin{align}\label{33.2}
  |\mu| \geqslant \sqrt{2A}-C_0 \geqslant \sqrt{5 A/3}
  \quad \text{on }Ê\mM_{2} .
\end{align}
Thus for any $A> A_0$,  we have $\mM_1\cap \mM_2=\emptyset$.
\end{rem}

By Sard's theorem, given $C>0$, there exists $C'>C$ 
which is a regular value of the functions 
 $|\mu|^2$ and $\frac{1}{2}|\theta|^2$ on $M\times N$.

 From now on, let $A> A_{0}$ be a regular value of
 $|\mu|^2$ and $\frac{1}{2}|\theta|^2$. 
By Remark \ref{t33.1} and (\ref{2.12}), $\mM$ is a smooth $G$-manifold 
with boundary $\partial \mM= \mM_1\cup \mM_2 .$

From  (\ref{0.4}), (\ref{1.11}) and (\ref{2.3}),  for any
$1\leqslant i\leqslant \dim G$, we have 
\begin{align}\label{2.11}
  \begin{split}
 V_{i}^{M\times N}=& V_{i}^M+ V_{i}^N,
\quad  \theta_{i}=
\mu_{i}+\eta_{i},\\
 \left(d\mu_i\right)^*=&J^MV_i^M,\ \ \ \
 \left(d\eta_i\right)^*=J^NV_i^N.
\end{split}\end{align}
By (\ref{1.2z}) and the first equation of  (\ref{2.11}), we get
\begin{align}\label{2.25}\begin{split}
 \mu^{M\times N}
 =\mu^{M}+\mu^{N},\quad
 \theta^{M\times N}
 = \theta^{M}+\theta^{N}.
\end{split}\end{align}
By (\ref{1.22}),  $\mu^{M}$ does not vanish on $\mM_1$,
so that $\mu^{M\times N}$ also does not vanish on $\mM_1$.
Similarly, $\theta^{M\times N}$ does not vanish on $\mM_2$.

Let $Y:\mM\rightarrow \kg$ be a $G$-equivariant smooth map such that
\begin{align}\label{33.3}
    Y|_{\mM_1}=\mu|_{\mM_1},\ \quad \ \ \
    Y|_{\mM_2}=\theta|_{\mM_2}.
\end{align} 
Then $Y^\mM\in \cC^{\infty}(\mM, T\mM)$ does not vanish on $\partial \mM$.



 The main result of this section can be stated as follows.

 \begin{thm}\label{t33.2} There exists $A_1\geqslant A_0$ such that
for any regular value $A> A_1$ of $|\mu|^2$
and $\frac{1}{ 2}|\theta|^2 $, the following identity holds:
\begin{align}\label{33.4} Q_{APS}^\mM(L\otimes F,Y)_{\gamma=0}=0.
\end{align}
 \end{thm}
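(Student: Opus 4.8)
The plan is to reduce \eqref{33.4} to an injectivity statement for a deformed Dirac operator on $\mM$ with APS boundary conditions and then to prove it by the analytic localization method of \cite{BL91}, \cite{TZ98}, \cite{TZ99}. By Theorem \ref{t1.3} it suffices to show $Q_{APS}^\mM(L\otimes F,Y)_{\gamma=0}=0$; by Proposition \ref{t1.1}, Definition \ref{t11.3} and \eqref{1.10}, for $T$ large this integer equals $\dim\Ker(D^{L\otimes F}_{\mM,+,T}(0),P_{\geqslant 0,+,T}(0))-\dim\Ker(D^{L\otimes F}_{\mM,-,T}(0),P_{>0,-,T}(0))$, where $D^{L\otimes F}_{\mM,T}=D^{L\otimes F}_{\mM}+\sqrt{-1}T\,c(Y^\mM)$ as in \eqref{1.8}. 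Hence it is enough to produce, for $T$ large, a constant $c>0$ with $\|D^{L\otimes F}_{\mM,T}s\|_0^2\geqslant c\|s\|_0^2$ for every $s\in\Omega^{0,\bullet}(\mM,L\otimes F)^{\gamma=0}$ satisfying the relevant APS condition $P_{\geqslant 0,+,T}(s|_{\partial\mM})=0$ or $P_{>0,-,T}(s|_{\partial\mM})=0$. By Remark \ref{t11.2} we may moreover replace $Y$ by any $G$-equivariant $\psi:\mM\to\kg$ homotopic to $Y$ through maps whose induced vector field does not vanish on $\partial\mM$.

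For such a $\psi$ I would take $\psi=\mu+\rho\,\eta$ on $\mM$, where $\rho=\chi(|\mu|^2)$ with $\chi$ a cut-off function equal to $0$ near the value $|\mu|^2=A$ and to $1$ on $[3A/2,\infty)$; as $|\mu|^2$ is $G$-invariant, $\rho$ and hence $\psi$ are $G$-equivariant. Since $N$ is compact, $|\eta|\leqslant C_0$ on $N$, and by Remark \ref{t33.1} and \eqref{2.12} one has $|\mu|^2\geqslant A$ on the compact manifold $\mM$ and $|\mu|\geqslant\sqrt{5A/3}$ on $\mM_2$, so for $A$ large $\rho\equiv 0$ near $\mM_1$ and $\rho\equiv 1$ near $\mM_2$. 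Thus $\psi|_{\mM_1}=\mu=Y|_{\mM_1}$ and $\psi|_{\mM_2}=\mu+\eta=\theta=Y|_{\mM_2}$, and the linear homotopy $(1-t)\psi+tY$ is admissible because on $\mM_1$ (resp.\ $\mM_2$) every member equals $\mu$ (resp.\ $\theta$), whose induced vector field is nonzero there by \eqref{1.22} (resp.\ by the remark after \eqref{2.25}), so that $\psi^\mM\neq 0$ on $\partial\mM$. The crucial feature of this $\psi$ is that $\theta=\mu+\eta$ is the moment map of the prequantum bundle $L\otimes F$ on $M\times N$, and that on $\mM$ one has the lower bound $\langle\psi,\theta\rangle=|\mu|^2+(1+\rho)\langle\mu,\eta\rangle+\rho|\eta|^2\geqslant|\mu|^2-2C_0|\mu|\geqslant A-2C_0\sqrt A$, which tends to $+\infty$ with $A$.

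For the estimate, expand $(D^{L\otimes F}_{\mM,T})^2$ by the Bochner--Lichnerowicz formula \eqref{3.857} (cf.\ \cite[Theorem 1.6]{TZ98}, \eqref{1.27}). By \eqref{1.914}, \eqref{1.14} and the Kostant formula \eqref{0.3}, which gives $\mu^{L\otimes F}(V_i)=2\pi\sqrt{-1}\theta_i$, the first-order term $-2\sqrt{-1}T\,\nabla^{S(T\mM)\otimes(L\otimes F)}_{\psi^{M\times N}}$ equals $-2\sqrt{-1}T\sum_i\psi_iL_{V_i}+4\pi T\langle\psi,\theta\rangle+O(T)$, the $O(T)$-term, coming from the spinor moments $\mu^{S(T\mM)}(V_i)$, having coefficients bounded on the compact $\mM$. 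On the $\gamma=0$ component the operators $L_{V_i}$ act by zero, so the dangerous term drops out, and one is left with $(D^{L\otimes F}_{\mM})^2$, the nonnegative term $T^2|\psi^{M\times N}|^2$, a ``Hessian'' Clifford term $\sqrt{-1}T\sum_j c(e_j)c(\nabla_{e_j}\psi^{M\times N})$ with bounded coefficients, and the good term $4\pi T\langle\psi,\theta\rangle\geqslant 4\pi T(A-2C_0\sqrt A)$; this is exactly where taking $A$ large lets the good term dominate the $O(T)$ errors. The outcome is the pointwise estimate Lemma \ref{t33.6}: near each point of $\mM$, $\langle(D^{L\otimes F}_{\mM,T})^2 s,s\rangle\geqslant C(\|D^{L\otimes F}_{\mM}s\|_0^2+(T-b)\|s\|_0^2)$ for $s$ compactly supported in the $\gamma=0$ component, with the sharper $T^2$-estimate of the type \eqref{a4.30} near $\partial\mM$, where $\psi^\mM\neq 0$. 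Gluing these local estimates with a Bismut--Lebeau partition of unity (\cite[pp.\ 115--116]{BL91}, as in \eqref{3.860}--\eqref{3.862}) and controlling, via the APS condition and Proposition \ref{t1.2} (applicable since $\psi^\mM$ does not vanish on $\partial\mM$), the boundary term produced by Green's formula, one gets $\|D^{L\otimes F}_{\mM,T}s\|_0^2\geqslant C(T-b)\|s\|_0^2$ on the whole $\gamma=0$ component with APS conditions. Hence for $T$ large the APS operator of Theorem \ref{ta2.3} and its adjoint are injective, the APS index vanishes, and by homotopy invariance $Q_{APS}^\mM(L\otimes F,Y)_{\gamma=0}=0$.

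The main obstacle is the pointwise estimate Lemma \ref{t33.6} near the zeros of $\psi^\mM$. As emphasized in the introduction, the product structure fails --- $\theta^{M\times N}=\theta^M+\theta^N$ with $\theta^M(x,y)=(\mu(x)+\eta(y))^M(x)$ genuinely coupling $M$ and $N$ (see \eqref{2.25}) --- and in general no $\psi$ has $\psi^\mM$ nowhere zero on $\mM$, so one cannot reduce to the estimates already available on $M$ and $N$. Instead one must run a Tian--Zhang type argument as in \cite[Proposition 2.2]{TZ98}: transverse to $\{\psi^\mM=0\}$ the term $T^2|\psi^{M\times N}|^2$ together with the Clifford ``Hessian'' term yields a harmonic-oscillator lower bound of order $T$, while along $\{\psi^\mM=0\}$ the positive term $4\pi T\langle\psi,\theta\rangle$ --- large on $\mM$ only because $|\mu|$ is forced to be large there --- supplies the missing positivity. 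Making this domination uniform, hence fixing the threshold $A_1$, and carrying the oscillator analysis through in this non-product situation with the boundary term kept under control, is the technical core of Sections \ref{s33.3}--\ref{sa.6}.
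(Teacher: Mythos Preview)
Your overall architecture is exactly the paper's: homotope $Y$ to a concrete $\psi$, prove the coercivity estimate of Theorem~\ref{ta2.3} for the deformed operator $D_{\mM,T}$ by combining a Lichnerowicz expansion, a Tian--Zhang oscillator argument at the zeros of $\psi^\mM$, a boundary estimate via Proposition~\ref{t1.2}, and the Bismut--Lebeau gluing. The gap is in the choice of $\psi$.

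Your $\psi=\mu+\rho\,\eta$ with $\rho=\chi(|\mu|^2)$ fails a property the paper builds in deliberately: $\psi^M$ is \emph{not} of the form $-\tfrac12 J^M(d^Mf)^*$ for any $f$. Indeed $2\psi^M=-J^M\big(d^M|\mu|^2+2\rho\,d^M\langle\mu,\eta\rangle\big)^*$, and the one--form in parentheses is $d^M$--closed only when $\chi'(|\mu|^2)\,d^M|\mu|^2\wedge d^M\langle\mu,\eta\rangle=0$, which has no reason to hold on the transition region. The paper's more intricate $\psi_A$ (equations \eqref{a5.5}--\eqref{33.7}) is engineered precisely so that Lemma~\ref{ta5.3} holds: $2\psi_A^M=-J^M(d^M|\rho_A|^2)^*$. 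This gradient structure is what makes the Tian--Zhang Case~2 argument run: it gives the diagonal Hessian form \eqref{4.108}--\eqref{4.110a}, the clean Clifford computation \eqref{4.111}, and the harmonic oscillator square--completion \eqref{4.115}--\eqref{4.116}. Without it the linearization of $\psi^M$ at a zero is a non--symmetric matrix and the oscillator lower bound you invoke does not follow from \cite{TZ98}.

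A second, related issue: your assertion that the Clifford ``Hessian'' term has ``bounded coefficients'' cannot be used as stated. This term contains pieces like $c(J^MV_j^M)c(V_j^M)$ and $c(J^MV_j^M)c(V_j^N)$, whose fiberwise norms involve $\sum_j|V_j^M|^2$; there is no bound on this quantity uniform in $A$ as $\mM$ drifts to infinity in the noncompact $M$. The paper does not bound these terms at all --- it absorbs them. At zeros of $\psi_A^\mM$ it uses Lemma~\ref{ta5.6} to convert every occurrence of $\mu^M,\mu^N,\gamma_A^M,\gamma_A^N$ into multiples of $\eta^M,\eta^N$ (bounded since $N$ is compact), obtains the positive term $\tfrac{\tau_{A2}}{2}\sum_jB(V_j^M)$ in \eqref{33.55}, and then uses the inequality $\sqrt{-1}c(W)c(V)\geqslant-\tfrac{1}{2k}B(W)-k|V|^2$ of Lemma~\ref{ta5.12} to swallow the cross terms into this positive piece (see \eqref{aa4.95}--\eqref{aa4.100}). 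The delicate cancellations behind \eqref{33.56}--\eqref{33.59}, controlled by Lemmas~\ref{ta5.10}, \ref{t5.6}, \ref{ta5.11}, depend on the specific coefficients $\tau_{A\cdot}$ produced by the paper's $\psi_A$; they are not available for a generic interpolant. So: right strategy, but the deformation you propose is too crude to support either the oscillator step or the absorption of the unbounded Clifford cross terms.
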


 \begin{rem}\label{t3.2a} 
By Theorem \ref{t1.3}, (\ref{33.4}) is equivalent to (\ref{0.103})
with $a=\frac{1}{2}b=A$.
 \end{rem}

\subsection{Proof of Theorem \ref{t33.2}
}\label{s33.2}


\begin{lemma}\label{t33.3} There exist two real smooth functions
$\wi{\alpha},\wi{\phi} \in \cC^\infty(\R)$ verifying the following properties,
\begin{align}\label{a.01}
\begin{split}
\wi{\alpha}(t) =& \begin{cases}
t^2 \,,\quad &\text{for}\,\,\,  t\leqslant \frac{1}{3},\\
1 \,,\quad &\text{for}\,\,\, t\geqslant \frac{2}{3},
\end{cases} \quad\quad
\wi{\phi}(t) = \begin{cases}
1-t^3 \,,\quad &\text{for}\,\,\,  t\leqslant \frac{4}{11},\\
2(1-t) \,,\quad &\text{for}\,\,\, t\geqslant \frac{2}{3},
\end{cases}\\
 \wi{\alpha}(t)+&\wi{\phi}(t) \geqslant \frac{29}{27} \quad
 \quad\text{ for } \,\, \,
\frac{1}{3}\leqslant t\leqslant \frac{2}{3}\, ; 
\quad \quad\wi{\phi}'(t)<0 \quad\text{ for } \,\, \,
0< t\leqslant  1. 
\end{split}\end{align}
\end{lemma}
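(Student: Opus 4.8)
The plan is to write down $\widetilde{\alpha}$ and $\widetilde{\phi}$ by explicit piecewise formulas in which each of the two functions switches from one of its prescribed expressions to the other on a \emph{disjoint} transition interval, chosen so that the bound $\widetilde{\alpha}+\widetilde{\phi}\geqslant 29/27$ can be verified on three subintervals of $[1/3,2/3]$ on each of which at least one of $\widetilde{\alpha},\widetilde{\phi}$ is still given by an ``outer'' formula.

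Concretely, I would take $\widetilde{\alpha}(t)=t^2$ for $t\leqslant 9/20$, $\widetilde{\alpha}(t)=1$ for $t\geqslant 1/2$, and let $\widetilde{\alpha}$ be any smooth non-decreasing function on $[9/20,1/2]$ whose jets at the two endpoints match those of $t^2$ and of the constant $1$; this exists by the usual smoothing, and since $9/20>1/3$ and $(9/20)^2<1$ all the conditions on $\widetilde{\alpha}$ in \eqref{a.01} hold. Similarly I would take $\widetilde{\phi}(t)=1-t^3$ for $t\leqslant 1/2$ (which includes the prescribed range $t\leqslant 4/11$), $\widetilde{\phi}(t)=2(1-t)$ for $t\geqslant 2/3$, and interpolate on $[1/2,2/3]$ by a smooth function with matching endpoint jets and strictly negative derivative throughout; such an interpolant exists because the mean slope $\widetilde{\phi}$ must realize over $[1/2,2/3]$, namely $-(7/8-2/3)/(1/6)=-5/4$, lies strictly between the two prescribed endpoint slopes $-3/4$ and $-2$. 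Since $\widetilde{\phi}'=-3t^2<0$ on $(0,1/2]$ and $\widetilde{\phi}'=-2<0$ on $[2/3,1]$, we get $\widetilde{\phi}'<0$ on all of $(0,1]$.

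It remains to check $\widetilde{\alpha}+\widetilde{\phi}\geqslant 29/27$ on $[1/3,2/3]$, which I would split into three pieces. On $[1/3,9/20]$ both functions are at their outer values, so $\widetilde{\alpha}+\widetilde{\phi}=t^2+(1-t^3)=1+t^2(1-t)$; as $\tfrac{d}{dt}(t^2(1-t))=t(2-3t)\geqslant 0$ on $[0,2/3]$, the factor $t^2(1-t)$ is non-decreasing there, hence $\geqslant (1/3)^2(2/3)=2/27$, so $\widetilde{\alpha}+\widetilde{\phi}\geqslant 29/27$ with equality exactly at $t=1/3$. On $[9/20,1/2]$ we still have $\widetilde{\phi}(t)=1-t^3\geqslant 1-(1/2)^3=7/8$, while $\widetilde{\alpha}$ is non-decreasing so $\widetilde{\alpha}(t)\geqslant (9/20)^2=81/400$; hence $\widetilde{\alpha}+\widetilde{\phi}\geqslant 81/400+7/8=431/400>29/27$. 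Finally, on $[1/2,2/3]$ we have $\widetilde{\alpha}\equiv 1$ and $\widetilde{\phi}$ decreasing, so $\widetilde{\phi}(t)\geqslant \widetilde{\phi}(2/3)=2/3$ and $\widetilde{\alpha}+\widetilde{\phi}\geqslant 5/3>29/27$.

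The one thing that genuinely has to be arranged is the phasing of the two transitions: the (necessarily steep) rise of $\widetilde{\alpha}$ from $t^2$ to $1$ must be finished while $1-t^3$ is still comfortably above $7/8$, and the fall of $\widetilde{\phi}$ toward $2(1-t)$ must not start until $\widetilde{\alpha}$ has already reached $1$ — this is why the transitions are placed on $[9/20,1/2]$ and on $[1/2,2/3]$ respectively. The remaining ingredients (existence of the smooth interpolants with prescribed jets, monotonicity of $\widetilde{\alpha}$, negativity of $\widetilde{\phi}'$) are routine, and the only slightly delicate bookkeeping is near $t=1/3$, where the inequality is tight and is reduced exactly to the monotonicity of $t\mapsto t^2(1-t)$ on $[0,2/3]$.
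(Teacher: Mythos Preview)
Your proof is correct. The construction and verification are sound: the choice of disjoint transition intervals $[9/20,1/2]$ for $\widetilde{\alpha}$ and $[1/2,2/3]$ for $\widetilde{\phi}$ lets you check the sum bound on three subintervals of $[1/3,2/3]$ on each of which at least one of the two functions is still at an explicit outer value, and the numerics all go through (in particular $431/400>29/27$ and the tight case at $t=1/3$ reduces, as you say, to the monotonicity of $t^2(1-t)$ on $[0,2/3]$). The existence of the monotone interpolant for $\widetilde{\alpha}$ and the strictly decreasing interpolant for $\widetilde{\phi}$ is standard; for the latter one can, for instance, keep $\widetilde{\phi}=1-t^3$ up to the zero $t_0=(\sqrt{5}-1)/2$ of $2(1-t)-(1-t^3)$ and then blend on $[t_0,2/3]$, where the blending term has the right sign.

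The paper takes a slightly different route: it interpolates \emph{both} functions simultaneously, setting $\widetilde{\alpha}_0$ and $\widetilde{\phi}_0$ equal to the prescribed outer formulas on $t\leqslant 3/8$ and $t\geqslant 5/8$ and taking them linear on $[3/8,5/8]$, then smoothing the corners. This is terser, but leaves implicit the check that the smoothing preserves the inequality $\widetilde{\alpha}+\widetilde{\phi}\geqslant 29/27$ near the corners (there is margin, since the piecewise sum equals $557/512>29/27$ at $t=3/8$). Your staggered transitions cost a few more lines but make that verification completely transparent and remove any concern about the two smoothings interacting; the paper's overlap is more economical but asks a bit more of the reader.
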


\begin{proof}
We may set $\wi{\alpha}_{0}(t) = t^2$, $\wi{\phi}_{0}(t) =
1-t ^3$ on $ t\leqslant \frac{3}{8}$; $\wi{\alpha}_{0}(t) = 1$,
$\wi{\phi}_{0}(t) = 2(1-t)$ on $ t\geqslant \frac{5}{8}$; and assume
that $\wi{\alpha}_{0}$, $\wi{\phi}_{0}$ are linear on
$\frac{3}{8}\leqslant t\leqslant \frac{5}{8}$.
By   
smoothing out the  linear interpolation, starting from 
$\wi{\alpha}_{0}$, $\wi{\phi}_{0}$, we get $\wi{\alpha},\wi{\phi}$ verifying
\eqref{a.01}.
\end{proof}

Let  $A> A_0$ be a regular value of $|\mu|^2$ and
$\frac{1}{2}|\theta|^2$. Set
 \begin{align}\label{a.02} \alpha_{A} (t)= \wi{\alpha} \left(\frac{t}{A}-1
\right),\ \ \
 \phi_{A}(t)= \wi{\phi} \left(\frac{t}{A}-1 \right).
\end{align}
The following identities hold:
\begin{align}\label{a.02x}
    \alpha_{A}' (t)=\frac{1}{ A}\, \wi{\alpha}' \left(\frac{t}{A}-1\right),\ \ \
 \phi_{A}'(t)=\frac{1}{ A}\, \wi{\phi}' \left(\frac{t}{A}-1 \right).
\end{align}

Let  $\beta_{A}\in \cC^\infty(M\times N)$ be defined by
\begin{align}\label{a5.5}
\beta_{A} = |\mu|^2 +\alpha_{A}\left(|\mu|^2\right) \left(|\theta|^2
-|\mu|^2\right).
\end{align}
Let  $\rho_{A},\, \gamma_{A},\, \psi_{A}: M\times N\to   \kg$ be
the $G$-equivariant smooth maps defined by
\begin{subequations}
\begin{align}\label{33.5}
\rho_{A}= &\theta - \phi_{A} (\beta_{A}) \eta,\\
\label{33.6}
    \gamma_{A}=&2\Big[1+ \alpha_{A}'\left(|\mu|^2\right) \left(|\theta|^2
-|\mu|^2\right) \Big] \mu + 2 \alpha_{A} (|\mu|^2)\eta,\\
\label{33.7}
    \psi_{A}=&\rho_{A}- \phi_{A}'(\beta_{A})
\left\langle \rho_{A},\eta\right\rangle\gamma_{A}.
\end{align}
\end{subequations}
For any function $f$ on $M\times N$, we denote by $d^M f$, $d^Nf$
its differentials along $M,\,N$ respectively.


The following lemma partly motivates our choice of $\psi_{A}$  (compare
with (\ref{1.22})).

\begin{lemma}\label{ta5.3} The following identity holds:
    \begin{align}\label{aa5.9}
   2\,  \psi_{A}^M =- J^M \left(d^M |\rho_{A}|^2\right)^*.
    \end{align}
\end{lemma}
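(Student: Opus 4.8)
The plan is a direct computation in which the maps $\gamma_A$ and $\psi_A$ of \eqref{33.6} and \eqref{33.7} will turn out to be precisely what is needed to make \eqref{aa5.9} hold; everything rests on the single relation $(d^M\mu_i)^* = J^M V_i^M$ (see \eqref{0.4} and \eqref{2.11}) together with $(J^M)^2 = -\Id$. Throughout, for a $\kg$-valued function $\Phi$ on $M\times N$ I write $\Phi^M = \sum_i \Phi_i V_i^M$ for the corresponding vector field in the $M$-directions.

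First I would use that $\eta$ is pulled back from $N$, so $d^M\eta = 0$ and $d^M\theta = d^M\mu$; then \eqref{2.3} and \eqref{33.5} give $d^M\rho_A = d^M\mu - \phi_A'(\beta_A)\,(d^M\beta_A)\,\eta$, whence
\[ d^M|\rho_A|^2 = 2\langle d^M\rho_A,\rho_A\rangle = 2\langle d^M\mu,\rho_A\rangle - 2\,\phi_A'(\beta_A)\,\langle\eta,\rho_A\rangle\,d^M\beta_A . \]
Writing $\rho_A = \sum_i(\rho_A)_i V_i$ and applying $(d^M\mu_i)^* = J^M V_i^M$ yields $\big(\langle d^M\mu,\rho_A\rangle\big)^* = J^M\rho_A^M$.

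Next I would compute $(d^M\beta_A)^*$. Setting $u = |\mu|^2$, $v = |\theta|^2$, equation \eqref{a5.5} gives $d^M\beta_A = \big(1 - \alpha_A(u) + \alpha_A'(u)(v-u)\big)d^Mu + \alpha_A(u)\,d^Mv$, and since $(d^Mu)^* = 2J^M\mu^M$ and $(d^Mv)^* = 2J^M\theta^M$ (again from $(d^M\mu_i)^* = J^M V_i^M$, using $\theta_i = \mu_i + \eta_i$ and $d^M\theta = d^M\mu$), the identity $\theta^M = \mu^M + \eta^M$ makes the $\alpha_A(u)$ terms combine, leaving
\[ (d^M\beta_A)^* = 2J^M\Big(\big(1 + \alpha_A'(u)(v-u)\big)\mu^M + \alpha_A(u)\,\eta^M\Big) = J^M\gamma_A^M \]
with $\gamma_A$ exactly as in \eqref{33.6}. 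Substituting this back and comparing with \eqref{33.7},
\[ \big(d^M|\rho_A|^2\big)^* = 2J^M\rho_A^M - 2\,\phi_A'(\beta_A)\,\langle\eta,\rho_A\rangle\,J^M\gamma_A^M = 2\,J^M\psi_A^M , \]
and applying $-J^M$ and using $(J^M)^2 = -\Id$ gives $-J^M\big(d^M|\rho_A|^2\big)^* = 2\,\psi_A^M$, which is \eqref{aa5.9}. I do not expect a genuine obstacle here; the only care needed is bookkeeping --- keeping track of which of $\mu,\eta,\theta,\rho_A,\gamma_A,\psi_A$ depend on $x\in M$ alone, on $y\in N$ alone, or on $(x,y)$, and correspondingly what $(\cdot)^M$ denotes when applied to these $\kg$-valued functions on $M\times N$.
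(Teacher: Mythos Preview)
Your proof is correct and is essentially the same direct computation as the paper's: both hinge on the key identity $(d^M\beta_A)^* = J^M\gamma_A^M$ and then recognize $\psi_A^M = \rho_A^M - \phi_A'(\beta_A)\langle\rho_A,\eta\rangle\,\gamma_A^M$. The only cosmetic difference is that the paper computes $(d\rho_{Aj})^*$ componentwise (including the $N$-part, which is reused in later lemmas) and then contracts with $\rho_{Aj}$, whereas you compute $(d^M|\rho_A|^2)^*$ directly; the two routes meet in one step.
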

\begin{proof} By (\ref{1.11}), \eqref{2.3}
 and \eqref{a5.5}-(\ref{33.6}), we have
\begin{align}\label{aa5.10}
    \begin{split}
d \beta_{A} =& 2\Big[1+ \alpha_{A}'(|\mu|^2) (|\theta|^2 -|\mu|^2)
  - \alpha_{A} (|\mu|^2)\Big] \mu_{j} d^M \mu_{j}
+2 \alpha_{A} (|\mu|^2) \theta_{j} d \theta_{j}\\
= &\gamma_{Aj}d^M \mu_{j} +  2 \alpha_{A} (|\mu|^2)
\theta_{j} d^N \eta_{j},\\
d \rho_{Aj} = & d \theta_{j} - \phi_{A}'(\beta_{A}) \eta_{j}d \beta_{A} -
\phi_{A}(\beta_{A}) d^N \eta_{j}.
\end{split}\end{align}
From \eqref{2.11} and \eqref{aa5.10}, we get
\begin{align}\label{aa5.11}
    \begin{split}
    (d\theta_{j})^* = &J^M V_{j}^M + J^N V_{j}^N,\\
(d\beta_{A})^* =&  J^M \gamma_{A}^M + 2 \alpha_{A} (|\mu|^2)
 J^N \theta^N,\\
(d\rho_{Aj})^* = &J^M V_{j}^M
- \phi_{A}'(\beta_{A}) \eta_{j} J^M\gamma_{A}^M \\
&+ \Big(1- \phi_{A}(\beta_{A})\Big) J^N V_{j}^N 
-2 \phi_{A}'(\beta_{A}) \alpha_{A} (|\mu|^2)
\eta_{j} J^N \theta^N.
    \end{split}\end{align}
From (\ref{1.2z}), \eqref{33.7} and the third equality in (\ref{aa5.11}),
we get
\begin{align}\label{5.12}
2 \psi_{A}^M
= 2 \rho_{Aj} \left[V_j^M
 - \phi_{A}'(\beta_{A})\eta_j\gamma_{A}^M\right]  
= - 2 J^M \rho_{Aj}(d^M\rho_{Aj})^*
=- J^M \left(d^M |\rho_{A}|^2\right)^*.
\end{align}

The proof of Lemma \ref{ta5.3} is completed.
\end{proof}


\begin{lemma}\label{t33.4} There exists $A_2\geqslant A_0$ such that
    for any regular value $A> A_2$ of $|\mu|^2$ and
$\frac{1}{2}|\theta|^2$, the following identities hold:
\begin{align}\label{a5.8}
 \begin{split}
\psi_{A} |_{\mM_1} =&  \mu,     \hspace{20mm} 
\qquad \beta_{A}|_{\mM_1} =A, \\
\psi_{A} |_{\mM_2} =&\Big(1 + \frac{4}{A}\left\langle
\theta,\eta\right\rangle \Big)\theta , \quad \beta_{A}|_{\mM_2} =2A   .
 \end{split}\end{align}
 Moreover, the following inequality holds:
 \begin{align}\label{33.8}
1 + \frac{4}{A}\left\langle \theta,\eta\right\rangle\geqslant  \frac{1}{2}
\quad \mbox{ on }  \mM_2.
\end{align}
In particular, $\psi_{A}^\mM $ does not vanish on $\partial \mM$.
\end{lemma}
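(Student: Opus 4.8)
The plan is to establish Lemma \ref{t33.4} by a direct evaluation of the piecewise functions $\wi\alpha$, $\wi\phi$ of Lemma \ref{t33.3}, and of their first derivatives, at the specific arguments that occur on $\mM_1$ and $\mM_2$, followed by substitution into the definitions \eqref{a5.5}--\eqref{33.7}. The only place where the size of $A$ plays a genuine role is in locating the value of $|\mu|^2$ on $\mM_2$ within the correct branch of the piecewise definition of $\wi\alpha$: this is supplied by the a priori estimate \eqref{33.2} of Remark \ref{t33.1}, which is valid precisely because $A>A_0$. The constant $A_2$ is then taken of the form $A_2=\max\{A_0,\,128\,C_0^2\}$, the second term being forced only by the final inequality \eqref{33.8}.

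First I would treat $\mM_1$, where $|\mu|^2=A$, so $\tfrac{|\mu|^2}{A}-1=0$, which lies in the branch $t\leqslant\tfrac13$ (resp. $t\leqslant\tfrac4{11}$) of $\wi\alpha$ (resp. $\wi\phi$). By \eqref{a.01} and \eqref{a.02}--\eqref{a.02x} this gives $\alpha_A(|\mu|^2)=\wi\alpha(0)=0$ and $\alpha_A'(|\mu|^2)=\tfrac1A\wi\alpha'(0)=0$, so \eqref{a5.5} yields $\beta_A|_{\mM_1}=|\mu|^2=A$; then $\wi\phi,\wi\phi'$ are in turn evaluated at $\tfrac{\beta_A}{A}-1=0$, producing $\phi_A(\beta_A)=1$ and $\phi_A'(\beta_A)=0$. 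Substituting into \eqref{33.5}--\eqref{33.7} and using $\theta=\mu+\eta$ gives $\rho_A|_{\mM_1}=\theta-\eta=\mu$, hence $\psi_A|_{\mM_1}=\rho_A=\mu$. Next, on $\mM_2$ one has $|\theta|^2=2A$ and, by \eqref{33.2}, $|\mu|^2\geqslant\tfrac53A$, so $\tfrac{|\mu|^2}{A}-1\geqslant\tfrac23$, which lies in the branch where $\wi\alpha\equiv1$, $\wi\alpha'\equiv0$; thus $\alpha_A(|\mu|^2)=1$, $\alpha_A'(|\mu|^2)=0$, and \eqref{a5.5} gives $\beta_A|_{\mM_2}=|\theta|^2=2A$. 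Consequently $\wi\phi,\wi\phi'$ are evaluated at $\tfrac{\beta_A}{A}-1=1$, where $\wi\phi(t)=2(1-t)$, giving $\phi_A(\beta_A)=0$ and $\phi_A'(\beta_A)=\tfrac1A\wi\phi'(1)=-\tfrac2A$. Plugging these into \eqref{33.5}--\eqref{33.7} yields $\rho_A|_{\mM_2}=\theta$, $\gamma_A|_{\mM_2}=2\mu+2\eta=2\theta$, and therefore $\psi_A|_{\mM_2}=\theta+\tfrac4A\langle\theta,\eta\rangle\,\theta=\bigl(1+\tfrac4A\langle\theta,\eta\rangle\bigr)\theta$.

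For the inequality \eqref{33.8}, on $\mM_2$ the Cauchy--Schwarz inequality together with \eqref{33.0} and $|\theta|=\sqrt{2A}$ gives $|\langle\theta,\eta\rangle|\leqslant\sqrt{2A}\,C_0$, so $\tfrac4A|\langle\theta,\eta\rangle|\leqslant\tfrac{4\sqrt2\,C_0}{\sqrt A}\leqslant\tfrac12$ once $A\geqslant128\,C_0^2$, which fixes $A_2$. Finally, the non-vanishing of $\psi_A^\mM$ on $\partial\mM$ is then immediate: on $\mM_1$ we have $\psi_A^\mM=\mu^{M\times N}$, which does not vanish on $\mM_1$ (as recorded after \eqref{2.25}, via \eqref{1.22}); on $\mM_2$ we have $\psi_A^\mM=\bigl(1+\tfrac4A\langle\theta,\eta\rangle\bigr)\theta^{M\times N}$, where the scalar factor is $\geqslant\tfrac12>0$ by \eqref{33.8} while $\theta^{M\times N}$ does not vanish on $\mM_2$. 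The main point requiring care in this argument is the branch-location step on $\mM_2$, which rests essentially on Remark \ref{t33.1} and hence on $A>A_0$; beyond that, everything is routine bookkeeping and there is no serious obstacle.
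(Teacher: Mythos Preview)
Your proof is correct and follows essentially the same approach as the paper's own proof: direct evaluation of $\wi\alpha,\wi\phi$ and their derivatives at the arguments forced by $\mM_1$ and $\mM_2$, using Remark~\ref{t33.1} (i.e.\ \eqref{33.2}) to locate $|\mu|^2$ on $\mM_2$ in the top branch of $\wi\alpha$. The only cosmetic difference is that you give an explicit constant $A_2=\max\{A_0,128\,C_0^2\}$, whereas the paper merely asserts the existence of such an $A_2$.
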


\begin{proof} On $\mM_1$, we have $|\mu|^2=A$. By
(\ref{a.01})-(\ref{33.5}), we deduce that on $\mM_{1}$,
 \begin{align}\label{33.8c}
\beta_{A}=A,\quad\phi_{A}(\beta_{A})=1,\quad \phi_{A}'(\beta_{A})=
\alpha_{A}\left(|\mu|^{2}\right)=0,\quad \rho_{A}=\mu.
\end{align}
The first two equalities in (\ref{a5.8}) follow 
from (\ref{33.7}) and (\ref{33.8c}).


From (\ref{33.2}) and (\ref{a.01})-(\ref{33.6}), for $A> A_0$,
we have on $\mM_{2} = (|\theta|^2)^{-1}(2A)$:  
 \begin{align}\label{33.9c}\begin{split}
&\alpha_{A}\left(|\mu|^{2}\right)=1,\quad
\alpha_{A}'\left(|\mu|^{2}\right)=0, \quad\gamma_{A}=2 \theta, \\
&\beta_{A}=2A,\quad \phi_{A}\left(\beta_{A}\right)=0,\quad
\rho_{A}=\theta,\quad \phi_{A}'\left(\beta_{A}\right)=-\frac{2}{A}.
\end{split}\end{align}
 By (\ref{33.7}) and (\ref{33.9c}),
 the last two identities in (\ref{a5.8}) hold.
Since $|\theta|=\sqrt{2A}$ on $\mM_2$,  (\ref{33.0}) implies that
there exists $A_2\geqslant A_0$ such
 that (\ref{33.8}) holds on $\mM_2$ for $A> A_2$.

 We have seen just after (\ref{2.25}) that $\mu^\mM$ and
 $\theta^\mM$ do not vanish  on $\mM_1$, $\mM_2$ respectively.
 Hence by (\ref{1.2z}), (\ref{a5.8}) and (\ref{33.8}),   
$\psi_{A}^\mM$ does not vanish  on $\partial \mM$ when $A> A_2$.

The proof of Lemma \ref{t33.4} is completed.
\end{proof}

Let $D^{L\otimes F}: \Omega^{0,\bullet}(M\times N, L\otimes F)\to
\Omega^{0,\bullet}(M\times N, L\otimes F)$
 be the Spin$^c$ Dirac operator on
 $M\times N$ (cf. \eqref{1.4} and Section \ref{s1.6}).
Following (\ref{1.8}), let $D_{\mM,T}$
be the operator defined for $T\in \R$, by
\begin{align}\label{a.2}
D_{\mM,T} = D^{L\otimes F} + \sqrt{-1}T
c\left(\psi_{A}^\mM\right): \Omega^{0,\bullet}(\mM, L\otimes F)\to
\Omega^{0,\bullet}(\mM, L\otimes F).
\end{align}
Let $P_{\geqslant 0, \pm, T}$ be the APS projections associated with 
$D_{\partial \mM, \pm, T}$ induced by $D_{\mM,T}$ (cf. \eqref{1.9}).

\begin{thm}\label{ta2.3} There exists $A_1\geqslant A_2$ such that
if $A> A_1$ is a regular value of $|\mu|^2$
 and $\frac{1}{2}|\theta|^2$, then there exist $C>0$, $T_{0}>0$ such that for
 any $T> T_{0}$ and $G$-invariant element $s$  of
 $\Omega^{0,\bullet}(\mM, L\otimes F)$ with
$P_{\geqslant  0, \pm, T}(s|_{\partial \mM})=0$, the following
inequality holds:
\begin{align}\label{3.118}
\left\|D_{\mM,T}s\right\|^2_0\geqslant C \left(\left\|D^{L\otimes F}
s\right\|^2_0+ T \|s\|^2_0\right).
 \end{align}
\end{thm}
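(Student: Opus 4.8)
The plan is to run the analytic localization method of Tian--Zhang \cite{TZ98}, \cite{TZ99} for the deformed operator $D_{\mM,T}$ on the compact manifold with boundary $\mM$, in the same spirit as the proof of Theorem \ref{t1.5} above; the one genuinely new ingredient is a fiberwise positivity estimate, the pointwise estimate Lemma \ref{t33.6}, which takes over the role played in that proof by \cite[Proposition 2.2]{TZ98}. First I would expand $\big(D_{\mM,T}\big)^2$ by the Bochner--Lichnerowicz formula \cite[Theorem 1.6]{TZ98}, applied to $M\times N$ with the prequantum line bundle $L\otimes F$ (of curvature $2\pi\sqrt{-1}\,\Omega$) and the deformation $c(\psi_{A}^\mM)$, using \eqref{aa5.11} to compute $\nabla^{T(M\times N)}\psi_{A}^\mM$. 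This yields
\begin{align*}
\big(D_{\mM,T}\big)^2 ={}& \big(D^{L\otimes F}\big)^2+\sqrt{-1}\,T\,\mR_A+T\,\mQ_A\\
&+4\pi T\,\langle\psi_{A},\theta\rangle-2\sqrt{-1}\,T\sum_i\psi_{Ai}L_{V_i}+T^2\big|\psi_{A}^\mM\big|^2 ,
\end{align*}
where $\mR_A$ is a $T$-independent bundle endomorphism and $\mQ_A=\tfrac12\sum_i\big(\sqrt{-1}\,c(J\,V_i^{M\times N})c(V_i^{M\times N})+|V_i^{M\times N}|^2\big)$. On $G$-invariant sections the Lie-derivative terms $\sum_i\psi_{Ai}L_{V_i}$ vanish identically, which is exactly why the statement is confined to the $\gamma=0$ component.

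Next, following \cite[pp.~115--116]{BL91}, I would cover $\mM=\mU\cup\mU'$ by two $G$-invariant open sets, where $\mU$ is a neighbourhood of $\partial\mM$ on which $\psi_{A}^\mM$ does not vanish (such $\mU$ exists by Lemma \ref{t33.4}) and $\overline{\mU'}\cap\partial\mM=\emptyset$. On $\mU$ one argues exactly as for \eqref{a4.30}: since $|\psi_{A}^\mM|^2$ is bounded below on $\overline{\mU}$, Green's formula together with the APS boundary condition and the invertibility of the boundary operator $D_{\partial\mM,T}(\gamma=0)$ supplied by Proposition \ref{t1.2} (note $c_{\gamma=0}=0$) controls the boundary contribution with the favourable sign, producing $C_1>0$, $T_1>0$ such that $\|D_{\mM,T}s\|_0^2\geqslant C_1\big(\|D^{L\otimes F}s\|_0^2+T^2\|s\|_0^2\big)$ for all $T>T_1$ and all $G$-invariant $s$ with $\supp(s)\subset\mU$ and $P_{\geqslant 0,\pm,T}(s|_{\partial\mM})=0$.

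On $\mU'$, which is disjoint from $\partial\mM$, I would use the interior estimate of Tian--Zhang type. Setting $G_{T,\varepsilon}=\big(D_{\mM,T}\big)^2-(4\pi-\varepsilon)\,T\,\langle\psi_{A},\theta\rangle+2\sqrt{-1}\,T\sum_i\psi_{Ai}L_{V_i}$ in analogy with \eqref{a4.30a}, the task is to establish the analogue of \eqref{1.31}: for each interior point $x$, and in particular at a zero of $\psi_{A}^\mM$, a local bound $\langle G_{T,\varepsilon}s,s\rangle\geqslant C_x\big(\|D^{L\otimes F}s\|_0^2+(T-b_x)\|s\|_0^2\big)$ on a neighbourhood $U_x\subset\mM\setminus\partial\mM$ of $x$; this reduces to the fiberwise statement that $\mQ_A+4\pi\langle\psi_{A},\theta\rangle$, restricted to $G$-invariant vectors, is positive at every zero of $\psi_{A}^\mM$ in $\mM\setminus\partial\mM$. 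That fiberwise inequality is the content of Lemma \ref{t33.6}. Granting it, one promotes the local bounds to the estimate on all of $\mU'$ by the gluing of \cite[\S 2c)]{TZ98}, provided one also checks $\langle\psi_{A},\theta\rangle\geqslant 0$ on $\mM$ --- an analogue of Lemma \ref{t1.6}, immediate near $\partial\mM$ from Lemma \ref{t33.4} and \eqref{33.8}, and for $A$ large a consequence of $|\mu|\geqslant\sqrt{5A/3}$ on $\mM$. Finally one glues the $\mU$- and $\mU'$-estimates: choosing $G$-invariant $h_1,h_2$ with $h_1^2+h_2^2\equiv 1$ subordinate to $\mU',\mU$, for $G$-invariant $s$ with $P_{\geqslant 0,\pm,T}(s|_{\partial\mM})=0$ both $h_1s$ and $h_2s$ satisfy the APS condition with $\supp(h_2s)\subset\mU$, $\supp(h_1s)\subset\mU'$, and from $\|D_{\mM,T}s\|_0^2\geqslant\tfrac12\|D_{\mM,T}(h_1s)\|_0^2+\tfrac12\|D_{\mM,T}(h_2s)\|_0^2-\|c((dh_1)^*)s\|_0^2-\|c((dh_2)^*)s\|_0^2$ one absorbs the commutator terms into $CT\|s\|_0^2$ for $T$ large, exactly as in \eqref{3.860}--\eqref{3.862}, thereby obtaining \eqref{3.118}.

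The hard part will be Lemma \ref{t33.6}. In Theorem \ref{t1.5} the deforming field $\psi=\mu$ is literally $\tfrac12$ the Hamiltonian vector field of $\mathcal H=|\mu|^2$, so \cite[Proposition 2.2]{TZ98} applies verbatim; here $\psi_{A}$ satisfies only the partial identity \eqref{aa5.9}, $2\psi_{A}^M=-J^M(d^M|\rho_{A}|^2)^*$, so the zero set of $\psi_{A}^\mM$ consists of the points where $d^M|\rho_{A}|^2=0$ and $\psi_{A}^N=0$ simultaneously --- a set that need not be compact and can be geometrically intricate. Verifying fiberwise positivity of $\mQ_A+4\pi\langle\psi_{A},\theta\rangle$ there requires a careful local analysis of $|\rho_{A}|^2$ resting on the explicit interpolation of Lemma \ref{t33.3}, on a Kirwan-type normal form for $|\mu|^2$, and on the control of both the $M$- and $N$-directions afforded by $|\mu|$ and $|\theta|$ being of order $\sqrt{A}$ with $A$ large; this is what is carried out in Sections \ref{s33.4}--\ref{sa.6}.
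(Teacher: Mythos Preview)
Your global architecture---boundary collar $\mU$ where $\psi_A^\mM\neq 0$ handled via Proposition~\ref{t1.2} and the APS condition, interior piece $\mU'$ handled by a local harmonic-oscillator estimate at zeros of $\psi_A^\mM$ with Lemma~\ref{t33.6} supplying the positivity, then Bismut--Lebeau gluing---is exactly the paper's strategy in Section~\ref{s33.3}. Two points, however, do not line up and, as written, make the interior step incoherent.

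First, the Bochner expansion you write, with a clean nonnegative $\mQ_A=\tfrac12\sum_iB(V_i^{M\times N})$, is not what one gets: the formula \cite[Theorem~1.6]{TZ98} yields that term only when the deforming field is the Hamiltonian vector field of the norm-square of the moment map. Here $\psi_A=\tau_{A2}\mu+\tau_{A4}\eta$ is not of this type on $M\times N$ (only the $M$-part obeys the partial relation~\eqref{aa5.9}); differentiating $\tau_{A2},\tau_{A4}$ produces the mixed terms $I_{A1},I_{A2},I_{A3}$ of~\eqref{a.3}, which are not sign-definite. The actual square is~\eqref{a.4}, and the content of Lemma~\ref{t33.6} is precisely to bound $\sqrt{-1}(I_{A1}+I_{A2}+I_{A3})$ from below at zeros of $\psi_A^\mM$ (Lemmas~\ref{ta5.7},~\ref{ta5.12}) and then dominate it by the large scalar $4\pi\langle\psi_A,\theta\rangle\geqslant 2\pi A$ from~\eqref{aa4.82}. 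Second, your $G_{T,\varepsilon}$ splitting is both unnecessary and internally inconsistent: you remove $(4\pi-\varepsilon)T\langle\psi_A,\theta\rangle$ to form $G_{T,\varepsilon}$, yet claim the local bound for $G_{T,\varepsilon}$ reduces to positivity of $\mQ_A+4\pi\langle\psi_A,\theta\rangle$---only $\varepsilon\langle\psi_A,\theta\rangle$ is left inside $G_{T,\varepsilon}$, so these do not match. The paper avoids this detour: since only the $\gamma=0$ component matters, it sets $F_{\mM,T}=(D_{\mM,T})^2+2\sqrt{-1}T\,\psi_{Aj}L_{V_j}$ in~\eqref{33.12}, which coincides with $(D_{\mM,T})^2$ on $G$-invariant sections, and proves the local estimate~\eqref{33.13} for $F_{\mM,T}$ with the \emph{full} $4\pi T\langle\psi_A,\theta\rangle$ kept inside; that full term is what makes Lemma~\ref{t33.6} strong enough ($\geqslant\pi A$) to absorb both the $I_{A\cdot}$ terms and the Hessian contribution $\tfrac{1}{k}\sum|a_j|$ in~\eqref{4.118}--\eqref{4.123}. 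No ``analogue of Lemma~\ref{t1.6}'' step is then needed. (Also, Lemma~\ref{t33.6} is a bound on the whole fibre, not ``restricted to $G$-invariant vectors''---that phrase has no fibrewise meaning.)
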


\begin{proof}[Proof of Theorem  \ref{t33.2}] 
Let $A> A_1$ be a regular value of $|\mu|^2$
   and $\frac{1}{2}|\theta|^2$.  Then by  Theorem \ref{ta2.3}, 
$(D_{\partial \mM, \pm, T}(\gamma=0),P_{\geqslant 0, \pm, T}(\gamma=0))$
is invertible for $T> T_{0}$.
By Propositions \ref{t1.2}, \ref{t1.1} and Definition \ref{t11.3}, this implies
\begin{align}\label{a.0}
    Q^\mM_{APS}\left(L\otimes F,\psi_{A}\right)_{\gamma=0}=0.
\end{align}
 We connect  the map $Y$ defined in (\ref{33.3}) and $\psi_{A}$ via
 $\psi_{At}=(1-t)Y+t\, \psi_{A}$, $0\leqslant t\leqslant 1$.
  Lemma \ref{t33.4} 
shows that 
$\psi_{At}^\mM\in \cC^\infty(\mM, T\mM)$
 generated by $\psi_{At}$ via (\ref{1.2z}) does not vanish on $\partial
 \mM$ for any $0\leqslant t\leqslant 1$. By the homotopy invariance of
 the APS index (cf. Remark \ref{t11.2}) and (\ref{a.0}), we get (\ref{33.4}).
\end{proof}

The rest of the section is devoted to the proof of Theorem \ref{ta2.3}.

\subsection{Proof of Theorem \ref{ta2.3}}\label{s33.3}
Let $\{e_k\}_{k=1}^{n}$ (resp. $\{f_i\}_{i=1}^{\dim N}$)
 be an oriented orthonormal frame of $TM$ (resp. $TN$).
Then $\{e_a\}_{a=1}^{\dim \mM}=\{e_k\}\cup \{f_i\}$ is an oriented
orthonormal frame  of $T\mM$.
Set
\begin{align}\label{a.3}
\begin{split}
I_{A1}=& \frac{1}{2} c\left(\left(d^M\psi_{Aj}\right)^*\right)
c\left(V_j^M+2V_j^N\right)
+ c\left(\left(d^N\psi_{Aj}\right)^*\right) c\left(V_j^M\right),\\
I_{A2}=&
 \frac{1}{2}  \left\langle \left(1+ \frac{J^M}{\sqrt{-1}} \right) V_j^M,
\left(d^M \psi_{Aj}\right)^* \right\rangle
=  \tr \left[\left(d^M \psi_{Aj}\right)|_{T^{(1,0)}M}
\otimes V_j^M\right],\\
I_{A3}=& c\left(\left(d^N\psi_{Aj}\right)^*\right)
c\left(V_j^N\right).
\end{split}\end{align}

\begin{thm} \label{ta.2} The following formula holds:
\begin{multline}\label{a.4}
D_{\mM,T}^2 = D^{L\otimes F,2} +
\sqrt{-1}T\left\{
\frac{1}{2}\sum_{k=1}^{n}
c\left(e_k\right)c\left(\nabla^{TM}_{e_k} \psi_{A}^M\right)
-  \tr \left[\left(\nabla^{TM}
 \psi_{A}^M\right)|_{T^{(1,0)}M}\right]\right\}\\
 +\sqrt{-1}T \left\{ \frac{1}{2} \sum_{i=1}^{\dim N}
c\left(f_i\right)c\left(\nabla^{TN}_{f_i} V_j^N\right)
- \tr \left[\left(\nabla^{TN}
 V_j^N\right)|_{T^{(1,0)}N}\right]\right\} \psi_{Aj}\\
+ 4 \pi T \left\langle\psi_{A}, \theta\right\rangle
+\sqrt{-1}T \left(I_{A1}+I_{A2}+I_{A3}\right) 
- 2 \sqrt{-1} T\psi_{Aj} L_{V_j}   
+ T^2 \left|\psi_{A}^\mM\right|^2 .
\end{multline}
\end{thm}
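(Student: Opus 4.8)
The plan is to obtain \eqref{a.4} as a Bochner--Lichnerowicz type identity for the deformed Dirac operator $D_{\mM,T}=D^{L\otimes F}+\sqrt{-1}T\,c(\psi_{A}^\mM)$, in the spirit of \cite[Theorem 1.6]{TZ98}, but now keeping track of the product structure on $M\times N$ and of the fact that $\psi_{A}$ is a general $G$-equivariant map rather than a moment map.

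First I would square $D_{\mM,T}$. With the conventions of \eqref{a1.20} one has $c(u)c(v)+c(v)c(u)=-2\langle u,v\rangle$, so that $(\sqrt{-1}T\,c(\psi_{A}^\mM))^{2}=T^{2}|\psi_{A}^\mM|^{2}$ and
\[
D_{\mM,T}^{2}=D^{L\otimes F,2}+\sqrt{-1}T\,\big(D^{L\otimes F}c(\psi_{A}^\mM)+c(\psi_{A}^\mM)D^{L\otimes F}\big)+T^{2}|\psi_{A}^\mM|^{2}.
\]
Since $\nabla^{S(T\mM)\otimes L\otimes F}$ is a Clifford connection, differentiating $c(\psi_{A}^\mM)$ through $\nabla$ and then using the Clifford relations to commute $c(\psi_{A}^\mM)$ past the remaining Clifford variables gives the standard identity
\[
D^{L\otimes F}c(\psi_{A}^\mM)+c(\psi_{A}^\mM)D^{L\otimes F}=\sum_{a}c(e_{a})\,c\big(\nabla^{T\mM}_{e_{a}}\psi_{A}^\mM\big)-2\,\nabla^{S(T\mM)\otimes L\otimes F}_{\psi_{A}^\mM}.
\]

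Next I would unwind the two terms on the right using $\psi_{A}^\mM=\psi_{Aj}(V_{j}^{M}+V_{j}^{N})$ (by \eqref{1.2z}, \eqref{2.25}). For the covariant-derivative term, Kostant's formula \eqref{0.3} applied to $L\otimes F$, whose moment map on $M\times N$ is $\theta$, together with \eqref{1.914} for $S(T\mM)$ and the fact that each $V_{j}^\mM$ is Killing ($G$ acts by isometries), gives $\nabla^{S(T\mM)\otimes L\otimes F}_{V_{j}^\mM}=L_{V_{j}}+2\pi\sqrt{-1}\,\theta_{j}+\mu^{S(T\mM)}(V_{j})$, so that $-2\sqrt{-1}T\,\psi_{Aj}\nabla_{V_{j}^\mM}$ contributes exactly the summands $-2\sqrt{-1}T\,\psi_{Aj}L_{V_{j}}$ and $4\pi T\langle\psi_{A},\theta\rangle$ of \eqref{a.4}, plus a residual term $-2\sqrt{-1}T\,\psi_{Aj}\mu^{S(T\mM)}(V_{j})$. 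For the Clifford term, I would use that the product connection splits, $\nabla^{T\mM}=\nabla^{TM}\oplus\nabla^{TN}$, and that $\nabla^{T\mM}_{e_{k}}V_{j}^{N}=0=\nabla^{T\mM}_{f_{i}}V_{j}^{M}$ because $V_{j}^{N},V_{j}^{M}$ are pulled back from the two factors; splitting $\{e_{a}\}=\{e_{k}\}\cup\{f_{i}\}$ and applying the Leibniz rule to $\nabla^{T\mM}_{e_{a}}\psi_{A}^\mM$, the sum $\sum_{a}c(e_{a})c(\nabla^{T\mM}_{e_{a}}\psi_{A}^\mM)$ becomes $\sum_{k}c(e_{k})c(\nabla^{TM}_{e_{k}}\psi_{A}^{M})+\psi_{Aj}\sum_{i}c(f_{i})c(\nabla^{TN}_{f_{i}}V_{j}^{N})+c((d^{M}\psi_{Aj})^{*})c(V_{j}^{N})+c((d^{N}\psi_{Aj})^{*})c(V_{j}^{M}+V_{j}^{N})$.

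Finally I would collect, using for a Killing field $K$ preserving the almost-complex structure the moment identity $\mu^{S}(K)=\tfrac14\sum c(e)c(\nabla_{e}K)+\tfrac12\tr[(\nabla K)|_{T^{(1,0)}}]$ — the Clifford-quadratic part coming from the spinor module and the trace part from the half-determinant twist of the canonical $\mathrm{Spin}^{c}$-structure (cf.\ \cite[Appendix D]{LaMi89}, \cite[\S 1]{TZ98}) — together with $\mu^{S(T\mM)}(V_{j})=\mu^{S(TM)}(V_{j})+\mu^{S(TN)}(V_{j})$ from the product splitting $S(T\mM)\cong S(TM)\otimes S(TN)$. Substituting, the pure-$M$ piece combined with $-2\psi_{Aj}\mu^{S(TM)}(V_{j})$ produces $\tfrac12\sum_{k}c(e_{k})c(\nabla^{TM}_{e_{k}}\psi_{A}^{M})-\tr[(\nabla^{TM}\psi_{A}^{M})|_{T^{(1,0)}M}]$ plus a leftover $\tfrac12 c((d^{M}\psi_{Aj})^{*})c(V_{j}^{M})+I_{A2}$, while the pure-$N$ piece combined with $-2\psi_{Aj}\mu^{S(TN)}(V_{j})$ produces $\big(\tfrac12\sum_{i}c(f_{i})c(\nabla^{TN}_{f_{i}}V_{j}^{N})-\tr[(\nabla^{TN}V_{j}^{N})|_{T^{(1,0)}N}]\big)\psi_{Aj}$; adding the two cross terms from the previous paragraph, one checks against the definitions in \eqref{a.3} that $\tfrac12 c((d^{M}\psi_{Aj})^{*})c(V_{j}^{M})+I_{A2}+c((d^{M}\psi_{Aj})^{*})c(V_{j}^{N})+c((d^{N}\psi_{Aj})^{*})c(V_{j}^{M}+V_{j}^{N})=I_{A1}+I_{A2}+I_{A3}$. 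Putting everything back into the expansion of $D_{\mM,T}^{2}$ gives \eqref{a.4}. The main obstacle is exactly this last step: pinning down the precise constants and signs in the spinor-bundle moment identity (compatibly with \eqref{a1.20} and \eqref{1.914}) and verifying that all residual mixed terms assemble, with the coefficients fixed in \eqref{a.3}, into $I_{A1}+I_{A2}+I_{A3}$; the rest is routine, if lengthy, bookkeeping.
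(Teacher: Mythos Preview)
Your proposal is correct and follows essentially the same route as the paper: the paper also starts from the identity $D_{\mM,T}^{2}=D^{L\otimes F,2}+\sqrt{-1}T\sum_{a}c(e_{a})c(\nabla^{T\mM}_{e_{a}}\psi_{A}^\mM)-2\sqrt{-1}T\,\nabla^{\Lambda^{0,\bullet}}_{\psi_{A}^\mM}+T^{2}|\psi_{A}^\mM|^{2}$, then expands $\nabla^{\Lambda^{0,\bullet}}_{\psi_{A}^\mM}=\psi_{Aj}\nabla^{\Lambda^{0,\bullet}}_{V_{j}^\mM}$ via the Kostant formula and the spinor moment identity (citing \cite[Lemma 1.5]{TZ98}), splits the Clifford term along the product $M\times N$ exactly as you do, and collects the residual pieces into $I_{A1}+I_{A2}+I_{A3}$ using the Leibniz-type identities you wrote. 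Your identification of the ``main obstacle'' is accurate but not a genuine gap: those constants are precisely the ones fixed in \cite[Lemma 1.5]{TZ98} and used verbatim in the paper's equation for $\nabla^{\Lambda^{0,\bullet}}_{\psi_{A}^\mM}$.
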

\begin{proof}
    \comment{ For any $1\leqslant a\leqslant \dim\mM$, it is clear that
\begin{align}\label{aa4.18}
\nabla^{\Lambda^{0,\bullet}}_{e_a}c\left(\psi_{A}^\mM\right)
=c\left(\psi_{A}^\mM\right)\nabla^{\Lambda^{0,\bullet}}_{e_a}+
c\left(\nabla^{T\mM}_{e_a}\psi_{A}^\mM\right).
\end{align}}

Let $\nabla^{\Lambda^{0,\bullet}}$ be a brief notation for
$\nabla^{\Lambda(T^{*(0,1)}\mM)\otimes L\otimes F}$.
By \eqref{a.2}, we deduce as in \eqref{1.13} and \eqref{3.857} that
\begin{multline}\label{a4.21}
D_{\mM,T}^2 
=  D^{L\otimes F,2}+  \sqrt{-1}T \sum_{a=1}^{\dim \mM}
c(e_a) c\left(\nabla^{T\mM}_{e_a}\psi_{A}^\mM\right) -2 \sqrt{-1} T
\nabla^{\Lambda^{0,\bullet}}_{\psi_{A}^\mM} + T^2
\left|\psi_{A}^\mM\right|^2 .
\end{multline}

From \eqref{2.11}, the definition of the moment map, and 
$L_K X= \nabla^{T\mM}_{K^\mM}X-\nabla^{T\mM}_X {K^\mM}$ 
for $K\in \kg, X\in \cC^\infty(\mM,T\mM)$, we get
(cf. \cite[Lemma 1.5]{TZ98}, \eqref{1.27})
\begin{multline}\label{aa4.23}
\nabla^{\Lambda^{0,\bullet}}_{\psi_{A}^\mM} 
= \psi_{Aj} \nabla^{\Lambda^{0,\bullet}}_{V_j^\mM}
= \psi_{Aj} L_{V_j}
+ 2\pi\sqrt{-1} \left\langle\psi_{A}, \theta\right\rangle \\
+\frac{1}{4}\sum_{k=1}^{n} c(e_k)
c\left(\nabla^{TM}_{e_k} V_j^M\right) \psi_{Aj}
+ \frac{1}{4} \sum_{i=1}^{\dim N} c(f_i)
c\left(\nabla^{TN}_{f_i} V_j^N\right) \psi_{Aj} \\
+ \frac{1}{2}\psi_{Aj} \tr \left[(\nabla^{TM}
V_j^M)|_{T^{(1,0)}M}\right] + \frac{1}{2} \psi_{Aj}\tr
\left[(\nabla^{TN} V_j^N)|_{T^{(1,0)} N}\right].
\end{multline}
By  \eqref{a.3}, we get 
\begin{align}\label{aa4.24}
\begin{split}
&\frac{1}{2}\sum_{k=1}^{n} c(e_k) c\left(\nabla^{TM}_{e_k} V_j^M\right) \psi_{Aj}
 =\frac{1}{2}\sum_{k=1}^{n} c(e_k)c\left(\nabla^{TM}_{e_k} \psi_{A}^M\right)
 -\frac{1}{2}c\left(\left(d^M\psi_{Aj}\right)^*\right) c\left(V_j^M\right),\\
 &\psi_{Aj} \tr \left[\nabla^{TM}  V_j^M|_{T^{(1,0)}M}\right] 
  = \tr \left[(\nabla^{TM}\psi_{A}^M)|_{T^{(1,0)}M}\right]  - I_{A2}.  
\end{split}\end{align}

Also by \eqref{1.11} and \eqref{2.11}, we have
\begin{multline}\label{aa4.25}
\sum_{a=1}^{\dim \mM} c(e_a)
c\left(\nabla^{T\mM}_{e_a}\psi_{A}^\mM\right) =
\sum_{k=1}^{n}
c(e_k)c\left(\nabla^{TM}_{e_k}\psi_{A}^M\right)
+  c\left(\left(d^M\psi_{Aj}\right)^*\right)c\left(V_j^N\right)\\
+  \sum_{i=1}^{\dim N} c(f_i)c\left(\nabla^{TN}_{f_i}
V_j^N\right) \psi_{Aj} 
+  c\left(\left(d^N\psi_{Aj}\right)^*\right)c\left(V_j^M+ V_j^N\right).
\end{multline}
By \eqref{a.3}, \eqref{a4.21}--\eqref{aa4.25}, we get (\ref{a.4}).
 The proof of Theorem \ref{ta.2} is completed.
\end{proof}



\begin{lemma}\label{t33.6} There exists $A_1\geqslant A_2$ such that if
$A> A_1$ is a regular value for   $|\mu|^2$ and
$\frac{1}{2}|\theta|^2$, then for any $z\in\mM $ with
$\psi_{A}^\mM(z)=0$, and any $f\in (\Lambda( T^{*(0,1)}\mM)\otimes
(L\otimes F)|_\mM)|_z$,  the following inequality holds at $z$:
\begin{multline}\label{33.14}
    {\rm Re}\left\langle\sqrt{-1}\left\{\frac{1}{2} \sum_{i=1}^{\dim N}
c\left(f_i\right)c\left(\nabla^{TN}_{f_i} V_j^N\right)  
- \tr \left[(\nabla^{TN} V_j^N)|_{T^{(1,0)}N}\right]\right\} \psi_{Aj}\, f,
f\right\rangle\\
+{\rm Re} \left\langle\Big(4 \pi  \left\langle\psi_{A},
\theta\right\rangle  + \sqrt{-1} \left(I_{A1}+I_{A2}+I_{A3}\right)\Big)
f,f\right\rangle\geqslant \pi A|f|^2 .
\end{multline}
\end{lemma}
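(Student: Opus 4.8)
The plan is to evaluate the left‑hand side of \eqref{33.14} at a fixed zero $z=(x,y)$ of $\psi_A^\mM$ --- which, by Lemma \ref{t33.4}, lies in the interior of $\mM$ --- and to show that the main positive term $4\pi\langle\psi_A,\theta\rangle$ dominates the sum of all the others by at least $\pi A|f|^2$ once $A$ is large. First I would put $\psi_A$ in a usable form: expanding \eqref{33.5}--\eqref{33.7} with $\theta=\mu+\eta$ and \eqref{33.6}, one gets $\psi_A=c_1\mu+c_2\eta$ as $\kg$‑valued functions on $M\times N$, with $c_1=1-2\phi_A'(\beta_A)\langle\rho_A,\eta\rangle\big(1+\alpha_A'(|\mu|^2)(|\theta|^2-|\mu|^2)\big)$ and $c_2=1-\phi_A(\beta_A)-2\phi_A'(\beta_A)\langle\rho_A,\eta\rangle\alpha_A(|\mu|^2)$. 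On $\mM$ we have $A\leqslant|\mu|^2$, $|\theta|^2\leqslant 2A$ and $|\eta|\leqslant C_0$ by \eqref{33.0}; hence $|\mu|,|\theta|,|\rho_A|,|\gamma_A|=O(\sqrt A)$, $|\theta|^2-|\mu|^2=2\langle\mu,\eta\rangle+|\eta|^2=O(\sqrt A)$, and by \eqref{a.02x} $\alpha_A',\phi_A'=O(A^{-1})$, $\phi_A''=O(A^{-2})$. It follows that $c_1=1+O(A^{-1/2})$ (so $c_1\geqslant\tfrac12$ for $A$ large) and that $c_2$ is bounded on $\mM$, whence $4\pi\langle\psi_A,\theta\rangle=4\pi\big(c_1|\mu|^2+(c_1+c_2)\langle\mu,\eta\rangle+c_2|\eta|^2\big)\geqslant 4\pi A-O(\sqrt A)$ everywhere on $\mM$.

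Next I would dispose of the remaining terms. The two summands built from $\nabla^{TN}V_j^N$, and $I_{A3}$, involve only data on the compact manifold $N$ --- together with $d^N\psi_{Aj}=O(1)$, which follows from the estimates just made --- so their real parts are $\geqslant-O(\sqrt A)|f|^2$. For $I_{A1}$ and $I_{A2}$ I would write $(d^M\psi_{Aj})^*=c_1J^MV_j^M+R_j$, where by \eqref{2.11} and the smallness of $d^Mc_1,d^Mc_2$ one has $\sum_j|R_j|=O(A^{-1/2})\max_k|V_k^M|$. The crucial cancellation --- the analogue, in the present setting, of the inequality $\sqrt{-1}\,c(J^MV_i^M)c(V_i^M)+|V_i^M|^2\geqslant 0$ used in \eqref{1.27} --- is that the leading part $\tfrac{c_1}{2}\sum_j c(J^MV_j^M)c(V_j^M)$ of $I_{A1}$ contributes at least $-\tfrac{c_1}{2}\big(\sum_j|V_j^M|^2\big)|f|^2$ to the real part, while $\sqrt{-1}\,I_{A2}=\tfrac{c_1}{2}\sum_j|V_j^M|^2+\sqrt{-1}\sum_j g\big(\pi^{1,0}V_j^M,R_j\big)$ contributes exactly $+\tfrac{c_1}{2}\big(\sum_j|V_j^M|^2\big)|f|^2$ plus a remainder, so these principal terms cancel. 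What survives are the mixed $M$--$N$ Clifford products $c_1\sum_j c(J^MV_j^M)c(V_j^N)$ and $c_2\sum_j c(J^NV_j^N)c(V_j^M)$, together with lower‑order remainders (coming from $R_j$ and from the analogous splitting of $(d^N\psi_{Aj})^*$); since $\|c(u)c(v)\|=|u|\,|v|$, their real parts are $\geqslant-\big(C_1(\sum_j|V_j^M|^2)^{1/2}+C_2A^{-1/2}\sum_j|V_j^M|^2\big)|f|^2$.

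Combining the above, \eqref{33.14} reduces to a polynomial bound, say $\sum_j|V_j^M(x)|^2=O(A^{3/2})$, at every zero $z=(x,y)$ of $\psi_A^\mM$; this is the heart of the difficulty, since as $A\to\infty$ the region $\{A\leqslant|\mu|^2\leqslant 2A\}$ runs off to infinity in the noncompact $M$ and the generators $V_j^M$ are a priori uncontrolled there. The extra structure at a zero is that $\psi_A(z)=c_1\mu(x)+c_2\eta(y)$ lies in $\kg_x\cap\kg_y$: stabilizing $y$ gives $c_1\mu^N(y)=-c_2\eta^N(y)$, hence $|\mu^N(y)|\leqslant C$ because $N$ is compact; stabilizing $x$ is, by Lemma \ref{ta5.3}, precisely the statement that $x$ is a critical point of $x'\mapsto|\rho_A(x',y)|^2$. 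The plan is to exploit these constraints together with the regime structure of $\wi\alpha,\wi\phi$ from Lemma \ref{t33.3} --- splitting $\mM$ into a collar of $\mM_1$ (on which $c_2$, hence $\mu^M(x)$, is small), a collar of $\mM_2$, and the interpolating zone, and analysing the critical equation $d^M|\rho_A|^2=0$ in each --- to extract the required bound on $\sum_j|V_j^M(x)|^2$. Carrying this out in detail is exactly the content of Sections \ref{s33.4}--\ref{sa.6}, and is where essentially all of the work lies.
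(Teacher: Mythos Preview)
Your outline has a genuine gap at the step you yourself flag as ``the heart of the difficulty'': the reduction to a bound $\sum_j|V_j^M(x)|^2=O(A^{3/2})$ at zeros of $\psi_A^\mM$. No such bound is available. The equation $\psi_A^M(z)=0$ (equivalently $d^M|\rho_A|^2=0$, by Lemma~\ref{ta5.3}) is a single vector equation in $T_xM$; it constrains only the particular combination $c_1\mu^M+c_2\eta^M=0$ and says nothing about the remaining generators $V_j^M(x)$, which on a noncompact $M$ can be arbitrarily large. Moreover, even granting $O(A^{3/2})$, your error term $C_2A^{-1/2}\sum_j|V_j^M|^2$ would be $O(A)$ with an uncontrolled constant, competing with the main term $\pi A$. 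Sections~\ref{s33.4}--\ref{sa.6} do \emph{not} establish any bound on $\sum_j|V_j^M|^2$; they sidestep it.

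The mechanism you are missing is this: do not throw away the positivity of $B(W)=\sqrt{-1}\,c(J^MW)c(W)+|W|^2\geqslant 0$ after the ``cancellation'', but keep $\frac{\tau_{A2}}{2}\sum_jB(V_j^M)$ (and, after reducing the remainder via the zero relations $\mu^M=-\frac{\tau_{A4}}{\tau_{A2}}\eta^M$, $\gamma_A^M=-\frac{2\tau_{A5}}{\tau_{A2}}\eta^M$ of Lemma~\ref{ta5.6}, also $\tau_{A6}B(\eta^M)$) as a reserve of positivity. The mixed $M$--$N$ Clifford products are then absorbed not by the crude bound $\|c(u)c(v)\|=|u|\,|v|$ but by the sharper operator inequality $\sqrt{-1}\,c(W)c(V)\geqslant -\tfrac{1}{2k}B(W)-k|V|^2$ (Lemma~\ref{ta5.12}); with $k=8$ this costs only $\tfrac{1}{16}B(V_j^M)$ and $\tfrac{1}{16}B(\eta^M)$ plus \emph{bounded} $|V_j^N|^2$, $|\eta^N|^2$ terms. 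The remaining coefficients $\tfrac{7}{8}\tau_{A2}-\tfrac{1}{8}\tau_{A4}$ and $\tfrac{7}{8}\tau_{A6}-\tfrac{1}{8}\tau_{A7}$ of $B(V_j^M)$ and $B(\eta^M)$ are shown to be positive (Lemmas~\ref{ta5.10}--\ref{ta5.11}), so these potentially unbounded terms contribute $\geqslant 0$ and one obtains $\sqrt{-1}(I_{A1}+I_{A2})\geqslant -C'$ uniformly, without ever controlling $|V_j^M|$. Your crude estimate of the mixed terms discards exactly the structure that makes the argument close.
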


Lemma \ref{t33.6} will be proved in Sections \ref{s33.4}-\ref{sa.6}.\\

Let $F_{\mM,T}:\Omega^{0,\bullet}(\mM, L\otimes F)\to
\Omega^{0,\bullet}(\mM, L\otimes F)$ be defined by
\begin{align}\label{33.12}
F_{\mM,T}=D_{\mM,T}^2 +2 \sqrt{-1} T\psi_{Aj} L_{V_j}.
\end{align}

\begin{prop}\label{t33.5} 
    Let $A_1>0$ be as in Lemma \ref{t33.6}.
If $A> A_1$ is a regular value for $|\mu|^2$ and
$\frac{1}{2}|\theta|^2$, then for any $z\in\mM\setminus \partial \mM$,
there exist an open neighborhood $U_z$ of $z$ in $\mM$, with
$\overline{U}_z\cap\partial\mM=\emptyset$, and $C_z>0,\, b_z>0$ such
that for any $T\geqslant 1$ and $s\in\Omega^{0,\bullet}(\mM, L\otimes F)$
with ${\supp}(s)\subset U_z$,  we have
\begin{align}\label{33.13}
{\rm Re}\, \Big\langle F_{\mM,T} s,s\Big\rangle
\geqslant  C_z\left(\left\|D^{L\otimes F}s\right\|_0^2
+\left(T-b_z\right)\|s\|_0^2\right) .
\end{align}
\end{prop}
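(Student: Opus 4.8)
The plan is to follow the local estimate of Tian--Zhang \cite[Proposition 2.2]{TZ98}, the only non‑automatic ingredient here being Lemma \ref{t33.6}. First I would rewrite $F_{\mM,T}$ by means of Theorem \ref{ta.2}. Since $F_{\mM,T}=D_{\mM,T}^2+2\sqrt{-1}T\psi_{Aj}L_{V_j}$, the unbounded term $-2\sqrt{-1}T\psi_{Aj}L_{V_j}$ of \eqref{a.4} cancels, so that $F_{\mM,T}$ contains no derivative along the $G$-orbits and
\begin{align}\label{33.5pf1}
F_{\mM,T}=D^{L\otimes F,2}+\sqrt{-1}T\Big\{\frac{1}{2}\sum_{k=1}^{n}c(e_k)\,c\big(\nabla^{TM}_{e_k}\psi_{A}^M\big)-\tr\big[(\nabla^{TM}\psi_{A}^M)|_{T^{(1,0)}M}\big]\Big\}+T\,\mathcal{R}_A+T^2\big|\psi_{A}^\mM\big|^2,
\end{align}
where $\mathcal{R}_A$ is the bundle endomorphism formed by the remaining $O(T)$ terms of \eqref{a.4}, i.e. the one whose real part is bounded below in Lemma \ref{t33.6}. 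Recall that $\mM$ is compact (as $\theta$ is proper and $N$ is compact), so the Clifford action is bounded on $\mM$ and every summand of \eqref{33.5pf1} other than $D^{L\otimes F,2}$ and $T^2|\psi_A^\mM|^2$ is a bundle endomorphism of norm $O(T)$; moreover $\langle D^{L\otimes F,2}s,s\rangle=\|D^{L\otimes F}s\|_0^2$ for $s$ of compact support in $\mM\setminus\partial\mM$.

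I would then localize near a given $z\in\mM\setminus\partial\mM$ and distinguish the two cases of \cite[Proposition 2.2]{TZ98}. If $\psi_A^\mM(z)\neq 0$, choose a neighbourhood $U_z$ with $\overline U_z\cap\partial\mM=\emptyset$ on which $|\psi_A^\mM|^2$ is bounded below by a positive constant; then for $s$ supported in $U_z$ one gets $\Re\langle F_{\mM,T}s,s\rangle\geqslant\|D^{L\otimes F}s\|_0^2-CT\|s\|_0^2+cT^2\|s\|_0^2$ with $c>0$, which for $T\geqslant 1$ yields \eqref{33.13} after choosing $C_z$ and $b_z$ suitably. This is \cite[Proposition 2.2, Case 1]{TZ98}.

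The substantial case is $\psi_A^\mM(z)=0$, equivalently $\psi_A^M(z)=\psi_A^N(z)=0$. By Lemma \ref{ta5.3}, $2\psi_A^M=-J^M(d^M|\rho_A|^2)^*$, so the $M$-part of \eqref{33.5pf1}, namely $D^{L\otimes F,2}+\sqrt{-1}T\{\frac{1}{2}c(e_k)c(\nabla^{TM}_{e_k}\psi_A^M)-\tr[(\nabla^{TM}\psi_A^M)|_{T^{(1,0)}M}]\}+T^2|\psi_A^M|^2$, has the K\"ahler harmonic-oscillator structure treated in \cite[Proposition 2.2, Case 2]{TZ98} (cf. \cite{BL91}); localized on a small enough $U_z$ it is bounded below by $\frac{1}{2}\langle D^{L\otimes F,2}s,s\rangle-C_zT\|s\|_0^2$. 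For the remaining part $T\mathcal{R}_A+T^2|\psi_A^N|^2$ I would use Lemma \ref{t33.6}: since $\Re\langle\mathcal{R}_A(z)f,f\rangle\geqslant\pi A|f|^2$ and $\mathcal{R}_A$ depends continuously on the base point, $\Re\langle\mathcal{R}_A(w)f,f\rangle\geqslant(\pi A-1)|f|^2$ for all $w$ in a sufficiently small $U_z$, while $T^2|\psi_A^N|^2\geqslant 0$. Combining these, for $s$ supported in $U_z$ and $T\geqslant 1$,
\begin{align}\label{33.5pf2}
\Re\langle F_{\mM,T}s,s\rangle\geqslant\frac{1}{2}\|D^{L\otimes F}s\|_0^2+\big(\pi A-1-C_z\big)\,T\|s\|_0^2 .
\end{align}
Here the threshold $A_1$ of Lemma \ref{t33.6} is, by construction, large enough that $\pi A$ outweighs the $A$-dependence of the oscillator loss $C_z$ uniformly over $\mM$; hence the coefficient of $T\|s\|_0^2$ in \eqref{33.5pf2} is $\geqslant 0$ and \eqref{33.13} holds with $C_z=\frac{1}{2}$ and an appropriate $b_z$.

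The main obstacle lies not in this Proposition---given Lemma \ref{t33.6}, the argument is a routine localization in the spirit of \cite{TZ98}, \cite{BL91}---but in ensuring that the \emph{explicit} lower bound $\pi A$ of Lemma \ref{t33.6} dominates the $O(T)$ losses produced by $\nabla^{TM}\psi_A^M$ and by the curvature of $L\otimes F$ near a zero of $\psi_A^\mM$; this is exactly why Lemma \ref{t33.6} is stated with the constant $\pi A$ and why it requires the delicate fibrewise estimate carried out in Sections \ref{s33.4}--\ref{sa.6}.
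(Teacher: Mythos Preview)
Your overall structure is right, and Case 1 as well as the identification of the harmonic-oscillator mechanism in Case 2 are on target. The gap is in the last step of Case 2. When you keep the fixed fraction $\tfrac{1}{2}$ of $D^{L\otimes F,2}$, the ``oscillator loss'' you call $C_z$ is essentially $\tfrac{1}{2}\sum_j|a_j|$, where the $a_j$ are the eigenvalues of the Hessian of $|\rho_A(\cdot,y_0)|^2$ at $x_0$. Nothing in the construction of $A_1$ in Lemma~\ref{t33.6} (which concerns only the endomorphism $\mathcal R_A$ at zeros of $\psi_A^{\mM}$) bounds $\sum_j|a_j|$ in terms of $A$; for a given $A$ there may well be zeros $z$ of $\psi_A^{\mM}$ at which $\tfrac{1}{2}\sum_j|a_j|>\pi A$, and then the coefficient $\pi A-1-C_z$ in your \eqref{33.5pf2} is negative, so the inequality does not yield \eqref{33.13}. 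Your sentence ``$A_1$ is, by construction, large enough that $\pi A$ outweighs the $A$-dependence of the oscillator loss $C_z$ uniformly over $\mM$'' is therefore unjustified.

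The paper avoids this not by controlling $\sum_j|a_j|$ but by exploiting the \emph{nonnegativity} of the completed oscillator. Writing $-\Delta_T^M=\sum_j(\nabla_{e_j}^*+T\,\mathrm{sgn}(a_j)\,t_j)(\nabla_{e_j}+T\,\mathrm{sgn}(a_j)\,t_j)\geqslant 0$ (cf.\ \eqref{4.115}, \eqref{4.116}), one has $-\Delta^N-\Delta_T^M\geqslant \tfrac{1}{k}(-\Delta^N-\Delta_T^M)$ for any $k>1$, which after re-expanding gives, on a ball of radius $\varepsilon$ about $z$,
\[
\tfrac{1}{2}\bigl(F_{\mM,T}+F_{\mM,T}^*\bigr)\ \geqslant\ -\tfrac{1}{k}\Delta^{M\times N}-\tfrac{T}{k}\sum_j|a_j|+\pi\,T A+\mO(1+T\varepsilon).
\]
Now choose $k$ (depending on $z$) so large that $\tfrac{1}{k}\sum_j|a_j|<\tfrac{\pi A}{2}$, and then $\varepsilon$ small; this makes the $T$-coefficient positive while still retaining $\tfrac{1}{k}\|D^{L\otimes F}s\|_0^2$ via the Lichnerowicz formula \eqref{4.106}. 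The point is that the constant $C_z$ in \eqref{33.13} is \emph{allowed} to depend on $z$: it ends up being of order $\tfrac{1}{k}$, not the fixed $\tfrac{1}{2}$ you used. Replace your fixed $\tfrac{1}{2}$ by a $z$-dependent $\tfrac{1}{k}$ and drop the uniformity claim about $A_1$; then the argument goes through exactly as in the paper's \eqref{4.118}--\eqref{4.123}.
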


\begin{proof} Let $A> A_1$ be a fixed regular value for $|\mu|^2$ and
$\frac{1}{2}|\theta|^2$, and we fix $z\in\mM\setminus \partial \mM$.

If $\psi_{A}^\mM(z)\neq 0$, then by (\ref{a.4}) and
(\ref{33.12}), we see that Proposition \ref{t33.5} holds.

From now on we assume that $\psi_{A}^\mM(z)=0$.
We write $z=(x_0,y_0)$ with $x_0\in M$, $y_0\in N$.
From  (\ref{1.2z}), $\psi_{A}^\mM(z)= \psi_{A}^M(z) + \psi_{A}^N(z)$ and 
$\psi_{A}^M(z)\in TM$,$\psi_{A}^N(z)\in TN$, thus
\begin{align}\label{aa4.47}
\psi_{A}^\mM(z)=0 \quad\text{if and only if } \quad  \psi_{A}^M(z)=0 \,
\, \text{ and } \,  \,  \psi_{A}^N(z)=0.
\end{align}


 Let $x'=(x_1,\cdots, x_{n})$ be the normal coordinate
system with respect to $\{e_j|_{x_0}\}_{j=1}^{n}$ near
$x_0\in M$. Let $y'=(y_1, \cdots, y_{\dim N})$ be the  normal coordinate
system near $y_0\in N$ associated with $\{f_i|_{y_0}\}_{i=1}^{\dim N}$.

By 
(\ref{aa5.9}), $\psi_{A}^M(z)=0$ implies that
$(d^{M}|\rho_{A}|^2)(z)=0$.  Thus 
we can choose the orthonormal frame
$\{e_{i}\}_{i=1}^{n}$ so that the function
$|\rho_{A}(\cdot,y_0)|^2$ 
 has the following
expression near $x_0$,
\begin{align}\label{4.108}
|\rho_{A}(x',y_0)|^2 = |\rho_{A}(x_0,y_0)|^2 + \sum_{j=1}^{n} a_j x_j^2
+ \mO(|x'|^3).
\end{align}

The following Lemma is an analogue of \cite[Lemma 2.3]{TZ98}.
\begin{lemma} The following inequality holds at the point $(x_0,y_0)$,
\begin{align}\label{4.109}
 \frac{\sqrt{-1}}{2}\sum_{k=1}^{n}
c(e_k)c\left(\nabla^{TM}_{e_k} \psi_{A}^M\right) -
\sqrt{-1} \tr \left[(\nabla^{TM}
 \psi_{A}^M)|_{T^{(1,0)}M}\right]
\geqslant -  \sum_{j=1}^{n} |a_j|,
\end{align}
and the inequality is strict if at least one of the $a_j$'s is
negative.
\end{lemma}
\begin{proof} Set
\begin{align}\label{4.110}
\psi_{A}^M{(x',y')} = - \sum_{k=1}^{n} t_k(x',y') J^M {e}_k.
\end{align}
Then Lemma \ref{ta5.3} and \eqref{4.108} imply that
\begin{align}\label{4.110a}
 t_k(x',y_0)= a_k x_k + \mO(|x'|^2).
\end{align}

Let $e_{j}= e_{j}^{1,0}+ e_{j}^{0,1}\in T^{(1,0)}M \oplus T^{(0,1)}M$. 
By \eqref{a1.20}, \eqref{4.110} and \eqref{4.110a}, 
we deduce that at the point $(x_0,y_0)$,
\begin{multline}\label{4.111}
 \frac{\sqrt{-1}}{2}\sum_{k=1}^{n}
c(e_k)c\left(\nabla^{TM}_{e_k} \psi_{A}^M\right) -
\sqrt{-1} \tr \left[(\nabla^{TM}
 \psi_{A}^M)|_{T^{(1,0)}M}\right]\\
= - \frac{\sqrt{-1}}{2}\sum_{j=1}^{n} a_j c(e_j)c(J^Me_j) -
\frac{\sqrt{-1}}{2}\sum_{j=1}^{n}
 \left\langle\left(1+ \frac{J^M}{\sqrt{-1}} \right) (-a_jJ^Me_j),
e_j \right\rangle\\
= -2  \sum_{j=1}^{n} a_j
i_{e_j^{0,1}}{e_j^{1,0*}}\wedge \geqslant -
\sum_{j=1}^{n} |a_j|, 
\end{multline}
where the last inequality is strict if at least one of the $a_j$'s
is negative.
\end{proof}


Let $\Delta^M$, $\Delta^N$ be the Bochner Laplacians on $M,N$ acting on 
$\Omega^{0,\bullet}(M, L)$, $\Omega^{0,\bullet}(N, F)$, respectively.
We still denote by  $\Delta^M$, $\Delta^N$ the induced operators
acting on $\Omega^{0,\bullet}(M\times N, L\otimes F)$, then 
$\Delta^{M\times N}= \Delta^M+ \Delta^N$ 
is the Bochner Laplacian on $M\times N$.
Clearly, they are nonpositive operators acting  on
$\Omega^{0,\bullet}(M\times N, L\otimes F)$.
From the Lichnerowicz formula for $D^{L\otimes F,2}$ (cf.
\cite[Appendix D]{LaMi89} and \cite[Theorem 1.3.5]{MM07}), we get
on $\mM$,
\begin{align}\label{4.106}
D^{L\otimes F,2} =- \Delta^{M\times N} + \mO(1),
\end{align}
and where $\mO(1)$ is an endomorphism of 
$\Lambda(T^{*(0,1)}\mM)\otimes L\otimes F$.


Let $F_{\mM,T}^{*}$ be the formal adjoint of $F_{\mM,T}$.
Note that $\left| \psi_{A}^\mM\right|^2= \left| \psi_{A}^M\right|^2
 +\left| \psi_{A}^N\right|^2$. 
From (\ref{a.4}), (\ref{33.14}), (\ref{33.12}), \eqref{4.109},
\eqref{4.110} and \eqref{4.106},   we find that 
$\frac{1}{2}\left(F_{\mM,T} + F_{\mM,T}^{*} \right)
  + \Delta^{M\times N}$ is an operator of order $0$, and near $z=(x_0,y_0)$,
\begin{multline}\label{4.112}
    \frac{1}{2}\left(F_{\mM,T} + F_{\mM,T}^{*} \right)
  + \Delta^{M\times N}   \geqslant  -T  \sum_{j=1}^{n} |a_j|
+ T^2  \sum_{j=1}^{n} t_j \left(x',y'\right)^2\\
+T^2 \left| \psi_{A}^N\left(x',y'\right)\right|^2
 + \pi  T A + \mO\left(1+ T\left|x'\right|+T \left|y'\right|\right).
\end{multline}

Let ${\var_0}>0$ be sufficiently small so that the orthonormal frame 
$\{{e}_j\}_{j=1}^{n}$ is well defined over the ball 
$B^M_{\var_0} (x_0)= \left\{x'\in M: d\left(x',x_0\right)<{\var_0}\right\}$,
and $\overline{B^M_{\var_0} (x_0)}\times \overline{B^N_{\var_0} (y_0)}
\cap \partial\mM=\emptyset$.
For any $1\leqslant j\leqslant n$, let $(\nabla_{e_j})^*$ be the formal
adjoint of $\nabla^{\Lambda^{0,\bullet}}_{e_j}$. 
We have (cf. \cite[(1.2.9)]{MM07})
\begin{align}\label{4.115c}
(\nabla_{e_j})^* 
= - \nabla^{\Lambda^{0,\bullet}}_{e_j} 
+ \left\langle e_j, \nabla^{TM}_{e_i}e_i\right\rangle.
\end{align}
 Set
\begin{align}\label{4.115}
- \Delta_T^M = \sum_{j=1}^{n} \Big( (\nabla_{e_j})^*+ T
(\text{sgn}\, a_j)  t_j\left(x',y'\right)\Big)
\left(\nabla^{\Lambda^{0,\bullet}}_{e_j} + T (\text{sgn}\, a_j)
t_j\left(x',y'\right)\right).
\end{align}
Clearly, $- \Delta_T^M$ is nonnegative near $z=(x_0,y_0)$. 
We verify using \eqref{4.110a} that
\begin{align}\label{4.116}
- \Delta_T^M 
=- \Delta^M - T  \sum_{j=1}^{n} |a_j| + T^2
\sum_{j=1}^{n} t_j \left(x',y'\right)^2 +
\mO\left(1 +T\left|x'\right|+T \left|y'\right|\right).
\end{align}
\comment{
where by  $\mO(\partial_{x'} +1 +T\left|x'\right|+T \left|y'\right|)$
we mean a first order differential operator of the form
\begin{multline}\label{4.117}
  \sum_{j=1}^{n} b_j\left(x',y'\right) \frac{\partial}{\partial x_j}
  + d\left(x',y'\right)\\
\text{with} \, \, b_j\left(x',y'\right)=\mO(1),\quad
d\left(x',y'\right)=  \mO\left(1 +T\left|x'\right|+T \left|y'\right|\right).
\end{multline}
We will also use similar notation for other operators.
}


By 
\eqref{4.112}, \eqref{4.115} and \eqref{4.116},
the following identity holds for any $k>1$, when  both sides act
on sections with compact support in $B^M_{\var_0} (x_0)\times B^N_{\var_0} (y_0)$,
\begin{multline}\label{4.118}
    \frac{1}{2}\left(F_{\mM,T} + F_{\mM,T}^* \right)
    \geqslant -\Delta^{N} - \Delta_T^M + \pi T A
+ \mO\left(1+ T\left|x'\right|+T \left|y'\right|\right)\\
 \geqslant  -\frac{1}{k} \Delta^{N} - \frac{1}{k} \Delta^M -
\frac{T}{k} \sum_{j=1}^{n} |a_j| + \pi\, T A
+ \mO\left(1+ T\left|x'\right|+T \left|y'\right|\right).
\end{multline}
\comment{
It is standard to obtain by (\ref{4.106}),
that there exist $C_1,C_2,C_3>0$ such that
for any $s\in \Omega^{0,\bullet}(\mM,L\otimes F)$ with
$\supp(s)\subset B^M_{\var_0} (x_0)\times B^N_{\var_0} (y_0)$, 
\begin{align}\label{4.119}\begin{split}
- \left\langle \Delta^{M\times N} s,s\right\rangle
& \geqslant C_1 \left\|D^{L\otimes F}s\right\|^2_0  - C_2\|s\|_0^2,\\
\left|\left\langle \mO\left(1+ T\left|x'\right|+T \left|y'\right|\right)
s,s\right\rangle\right| &\leqslant C_3 (1+ T\var) \|s\|_0^2.
\end{split}\end{align}

Moreover, by the standard elliptic estimate and the Cauchy inequality,
there exist $C_4,C_5>0$ such that
\begin{align}\label{4.120}
\left|\left\langle \mO\left(\partial_{x'}\right)
s,s\right\rangle\right| \leqslant C_4 \var \left\|D^{L\otimes
F}s\right\| ^2_0 + \frac{C_5}{\var} \|s\|_0^2.
\end{align}

From 
 \eqref{4.118} and \eqref{4.119}, we get
\begin{multline}\label{4.121}
\text {Re} \left(\left\langle F^{\mM}_T s,s\right\rangle\right)
\geqslant \left( \frac{C_1}{k}- C_4 \var\right)
\left\|D^{L\otimes F} s\right\|_0^2\\
+ T\left(\pi\, A- C_3\var
- \frac{1}{k} \sum_{j=1}^{n} |a_j| \right)\|s\|_0^2
- \Big \left( \frac{C_5}{\var} + \frac{C_2}{k}+C_3\right) \|s\|_0^2.
\end{multline}

Now, we first take $k$ large enough so that $A- \frac{1}{k}
\sum_{j=1}^{n} |a_j|> A/2$, and then   choose $\var$ small
enough so that
\begin{align}\label{4.123}
\frac{C_1}{k}- C_4 \var >0, \quad  \frac{A}{ 2} - C_3\var>0.
\end{align}
}
By  (\ref{4.106}) and \eqref{4.118}, 
there exist  $C_2,C_3>0$ such that
for any $0<\var<{\var_0}$, $s\in \Omega^{0,\bullet}(\mM,L\otimes F)$ with
$\supp(s)\subset B^M_\var (x_0)\times B^N_\var (y_0)$, we have
\begin{multline}\label{4.121}
\text {Re} \Big\langle F_{\mM,T} s,s\Big\rangle
\geqslant \frac{1}{k}  \left\|D^{L\otimes F} s\right\|_0^2
+ \Big[T\Big(\pi  A- \frac{1}{k} \sum_{j=1}^{n} |a_j| - C_3\var \Big)
- \Big( \frac{C_2}{k}+C_3\Big) \Big]\|s\|_0^2.
\end{multline}
We take $k$ large enough 
and choose $\var$ small enough so that
\begin{align}\label{4.123}
\frac{A}{2}- \frac{1}{k}\sum_{j=1}^{n} |a_j|>  0,
\quad  \frac{A}{2} - C_3\var>0.
\end{align}
With $\var$ chosen as in \eqref{4.123},  the conclusion of
 Proposition  \ref{t33.5} follows from  \eqref{4.121} in
the case where $\psi_{A}^\mM(z)=0$.
The proof of Proposition  \ref{t33.5} is completed.
\end{proof}

By 
Proposition \ref{t33.5} 
and the gluing trick due to Bismut-Lebeau \cite[pp. 115-117]{BL91}
(which has been used in the proof of (\ref{1.316})), we obtain the
following:  for any open subset $\mU'\subset \mM$ with
$\overline{\mU'}\cap
\partial\mM=\emptyset$, there exist $C_6>0,\, b_1>0$ such that for
any 
$s\in \Omega^{0,\bullet}(\mM, L\otimes F)$
with $\supp (s)\subset \mU'$, we have
\begin{align}\label{33.17}
\text {Re}\, \Big\langle F_{\mM,T} s,s\Big\rangle
\geqslant C_6\left(\left\|D^{L\otimes
F}s\right\|_0^2+\left(T-b_1\right)\|s\|_0^2\right) .
\end{align}

Let $\mU$ be a $G$-invariant open neighborhood of $\partial\mM$ 
in $\mM$ such that $\psi_{A}^\mM$ does not vanish on $\ov{\mU}$. 
As $\psi_{A}^\mM$ does not vanish on $\partial\mM$, the existence of 
${\mU}$ is clear.
 Then one can  proceed in exactly the same way as in the proof 
of (\ref{3.859}) (or \cite[Proposition 2.4]{TZ99}), to see that
there exist  $T_{2}>0$, $C_7>0$  such that for any
$T> T_2$ and 
$s\in \Omega^{0,\bullet}(\mM, L\otimes F)^{\gamma=0}$ with ${\rm
supp}(s)\subset \mU$ and $P_{\geqslant 0, \pm, T}(s|_{\partial \mM})=0$,
  we have
\begin{align}\label{3.112}
\left\|D_{\mM,T}s\right\|^2_0\geqslant C_7\left(\left\|D^{L\otimes F}
s\right\|^2_0+ T^2  \|s\|^2_0\right).
 \end{align}


In view of (\ref{33.12}), (\ref{33.17}) and (\ref{3.112}), one
 can then proceed as in the proof of (\ref{1.3168}), which goes back to
 \cite[pp. 115-117]{BL91}, to see that Theorem \ref{ta2.3} holds.

\subsection{Proof of Lemma \ref{t33.6} (I): 
uniform estimates on functions}\label{s33.4}


We give first uniform estimates for some functions appeared 
 in the definition of $\gamma_A$, $\psi_A$ when $A\to\infty$.

 Recall that $A_2>0$ was determined in Lemma \ref{t33.4}.
Let  $A> A_2$ be a regular value for $|\mu|^2$ and
$\frac{1}{2}|\theta|^2$. 
Set
\begin{align}\label{aa4.26}
\begin{split}
\tau_{A1} =& 1+ \alpha_{A}'(|\mu|^2) (|\theta|^2-|\mu|^2),\\
\tau_{A2} =& 
1- 2 \phi_{A}'(\beta_{A})\left\langle\rho_{A},\eta\right\rangle \tau_{A1},\\
\tau_{A4}= &1-\phi_{A}(\beta_{A}) - 2 \phi_{A}'(\beta_{A})\alpha_{A}(|\mu|^2)
\left\langle\rho_{A},\eta\right\rangle ,\\
\tau_{A5}=& \Big[1-\phi_{A}(\beta_{A})\Big]\tau_{A1} - \alpha_{A}(|\mu|^2)\\
=&1-\phi_{A}(\beta_{A})- \alpha_{A}(|\mu|^2) 
+\Big[1-\phi_{A}(\beta_{A})\Big] \alpha_{A}'(|\mu|^2)(|\theta|^2-|\mu|^2).
\end{split}\end{align}
Then 
\begin{align}\label{aa4.26c}
\tau_{A5}=\tau_{A1}\, \tau_{A4} -\alpha_{A}(|\mu|^2) \, \tau_{A2}\, .
\end{align}


From \eqref{33.6}, \eqref{aa4.26}, we obtain
\begin{align}\label{aa4.27}
\gamma_{A}  =2\tau_{A1} \mu  + 2\alpha_{A}(|\mu|^2)\eta \, .
\end{align}
From (\ref{2.3}), (\ref{33.5}), \eqref{33.7}, \eqref{aa4.26} and
\eqref{aa4.27}, we get
\begin{align}\label{aa4.48}\begin{split}
\psi_{A} &= \mu  +\Big[1- \phi_{A}(\beta_{A})\Big] \eta
- \phi_{A}'(\beta_{A})\left\langle\rho_{A},\eta\right\rangle
\Big[ 2\tau_{A1}\mu  + 2\alpha_{A}\left(|\mu|^2\right) \eta \Big]\\
&=\tau_{A2}\, \mu  + \tau_{A4}\,  \eta \, .
\end{split}\end{align}

In the following, for $s\in \R$ and a function $f_A$ on $\mM$, 
we write $f_A= \mO_0(A^{s})$ if there exists $C>0$ 
(independent on $A$)
such that its $\cC^0$-norm on $\mM$ can be controlled by $CA^{s}$.

The following lemma contains basic asymptotic estimates for these
$\tau$ functions.

\begin{lemma}  \label{ta5.10}  There exists $A_6\geqslant A_2$ such that for
  $A> A_6$, we have
\begin{align}\label{a.100}
 A<\beta_{A}<2A,\ \ \ {\rm on}\ \  \mM\setminus \partial  \mM.
\end{align}
  Thus
\begin{align}\label{a.101}
 0<\phi_{A}(\beta_{A})<1\ \ \ {\rm on}\ \  \mM\setminus \partial \mM.
\end{align}
 Moreover, 
\begin{subequations}
\begin{align}
\label{aa4.63}
\tau_{A1} &= 1+ \mO_0\left(A^{-1/2}\right),\ \ \quad
\tau_{A2} = 1+ \mO_0\left(A^{-1/2}\right),\\
\label{aa4.64}
\tau_{A4} &= \Big[1-\phi_{A}(\beta_{A})\Big]
\Big(1+\mO_0\left(A^{-1/2}\right)\Big),\\
  \label{aa4.65}
\tau_{A5} &=  
\Big[1-\phi_{A}(\beta_{A})-\alpha_{A}\left(|\mu|^2\right) \Big]
\Big(1+ \mO_0(A^{-1/2})\Big).
\end{align}
\end{subequations}
Finally, for any $A> A_6$, we have
\begin{align}\label{aa4.77}\begin{split}
    1-\phi_{A}(\beta_{A})-\alpha_{A}\left(|\mu|^2\right) &<0  \quad
\text{ if } (x,y)\in \mM\setminus \partial \mM,\\
&=0  \quad  \text{ if } (x,y)\in \partial \mM.
\end{split}\end{align}
\end{lemma}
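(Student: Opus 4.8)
The plan is to establish Lemma \ref{ta5.10} by a sequence of elementary but careful estimates, exploiting the fact that on $\mM$ we have $|\mu|^2 \geqslant A$ together with the bound $|\eta| \leqslant C_0$ coming from compactness of $N$. The key scaling observation is that on $\mM$, the argument $t = |\mu|^2/A - 1$ of the profile functions $\wi\alpha, \wi\phi$ lies in $[0,1]$ (this is precisely \eqref{33.2} and the constraint $|\theta|^2 \leqslant 2A$), so $\wi\alpha, \wi\phi$ and their derivatives are bounded by absolute constants there. Differentiating via \eqref{a.02x} then produces an extra factor $A^{-1}$, while $|\theta|^2 - |\mu|^2 = 2\langle \mu,\eta\rangle + |\eta|^2 = \mO_0(A^{1/2})$ (since $|\mu| = \mO_0(A^{1/2})$ on $\mM$ and $|\eta| = \mO_0(1)$); these two facts are what drive all the $\mO_0(A^{-1/2})$ error terms.

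First I would prove \eqref{a.100}. From \eqref{a5.5}, $\beta_A = |\mu|^2 + \alpha_A(|\mu|^2)(|\theta|^2 - |\mu|^2)$, and since $0 \leqslant \alpha_A \leqslant 1$ while $|\theta|^2 - |\mu|^2$ has a fixed sign is not guaranteed, I would argue instead that $\beta_A$ interpolates between the value $|\mu|^2$ (when $\alpha_A = 0$, i.e.\ near $\mM_1$) and $|\theta|^2$ (when $\alpha_A = 1$, i.e.\ near $\mM_2$); more precisely, writing $\beta_A = (1-\alpha_A)|\mu|^2 + \alpha_A |\theta|^2$ exhibits $\beta_A$ as a convex combination of $|\mu|^2 \in [A, \cdot]$ and $|\theta|^2 \leqslant 2A$, and on the interior $\mM \setminus \partial\mM$ one has $A < |\mu|^2$ strictly and $|\theta|^2 < 2A$ strictly except possibly where $\alpha_A = 0$ or $\alpha_A=1$; a short case analysis using $\wi\alpha(t) = t^2$ for $t \leqslant 1/3$ and $\wi\alpha(t) = 1$ for $t \geqslant 2/3$, plus \eqref{33.8c} and \eqref{33.9c}, pins down the boundary behaviour and yields the strict inequalities in the interior for $A$ large. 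Then \eqref{a.101} is immediate from \eqref{a.01}, which says $\wi\phi$ maps $(0,1)$ into $(0,1)$ (using $\wi\phi(0)=1$, $\wi\phi(1)=0$, and $\wi\phi' < 0$ on $(0,1]$).

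Next I would establish the asymptotic estimates \eqref{aa4.63}--\eqref{aa4.65}. For $\tau_{A1} = 1 + \alpha_A'(|\mu|^2)(|\theta|^2 - |\mu|^2)$, combine $\alpha_A'(|\mu|^2) = \frac{1}{A}\wi\alpha'(t) = \mO_0(A^{-1})$ with $|\theta|^2 - |\mu|^2 = \mO_0(A^{1/2})$ to get $\tau_{A1} = 1 + \mO_0(A^{-1/2})$. For $\tau_{A2}$, I need $\phi_A'(\beta_A)\langle \rho_A, \eta\rangle = \mO_0(A^{-1/2})$: here $\phi_A'(\beta_A) = \mO_0(A^{-1})$ by \eqref{a.02x}, while $\rho_A = \theta - \phi_A(\beta_A)\eta$ has $|\rho_A| \leqslant |\theta| + |\eta| = \mO_0(A^{1/2})$, and $|\eta| = \mO_0(1)$, so $\langle \rho_A, \eta\rangle = \mO_0(A^{1/2})$ and the product is $\mO_0(A^{-1/2})$; feeding this and \eqref{aa4.63} into the definition of $\tau_{A2}$ gives the claim. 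The estimate \eqref{aa4.64} for $\tau_{A4} = 1 - \phi_A(\beta_A) - 2\phi_A'(\beta_A)\alpha_A(|\mu|^2)\langle\rho_A,\eta\rangle$ follows similarly: the last term is $\mO_0(A^{-1/2})$ by the same reasoning, so $\tau_{A4} = (1 - \phi_A(\beta_A)) + \mO_0(A^{-1/2})$, and factoring out $1 - \phi_A(\beta_A)$ is legitimate provided one keeps track that this factor is nonnegative and, where it is small, the additive error is still absorbed — strictly speaking the cleanest route is to write $\tau_{A4} = (1-\phi_A(\beta_A))(1 + \mO_0(A^{-1/2}))$ using that wherever $1 - \phi_A(\beta_A)$ could be comparable to the error we are near $\mM_2$ where it vanishes and the whole expression is controlled directly. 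Finally \eqref{aa4.65} comes from \eqref{aa4.26c}, $\tau_{A5} = \tau_{A1}\tau_{A4} - \alpha_A(|\mu|^2)\tau_{A2}$: substitute \eqref{aa4.63}--\eqref{aa4.64} and collect, writing $\tau_{A1}\tau_{A4} = (1 - \phi_A(\beta_A))(1 + \mO_0(A^{-1/2}))$ and $\alpha_A(|\mu|^2)\tau_{A2} = \alpha_A(|\mu|^2)(1 + \mO_0(A^{-1/2}))$, so the difference is $(1 - \phi_A(\beta_A) - \alpha_A(|\mu|^2))(1 + \mO_0(A^{-1/2}))$ once one checks the error terms combine correctly.

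The last assertion \eqref{aa4.77} — that $1 - \phi_A(\beta_A) - \alpha_A(|\mu|^2) < 0$ on $\mM \setminus \partial\mM$ and $= 0$ on $\partial\mM$ — is the crux, and I expect it to be the main obstacle since, unlike the other estimates, it is a genuine inequality that must hold uniformly rather than just an order-of-magnitude bound. On $\partial\mM$: on $\mM_1$ we have $\alpha_A(|\mu|^2) = \alpha_A(A) = \wi\alpha(0) = 0$ and $\phi_A(\beta_A) = \phi_A(A) = \wi\phi(0) = 1$ by \eqref{33.8c}, giving $1 - 1 - 0 = 0$; on $\mM_2$ we have $\alpha_A(|\mu|^2) = 1$ and $\phi_A(\beta_A) = \wi\phi(1) = 0$ by \eqref{33.9c}, giving $1 - 0 - 1 = 0$. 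For the interior inequality I would parametrize by $u = |\mu|^2/A - 1 \in [0,1)$ and $v = \beta_A/A - 1 \in (0,1)$ (the latter by \eqref{a.100}), and the assertion becomes $\wi\phi(v) + \wi\alpha(u) > 1$. Now $v$ and $u$ are related: from $\beta_A = (1-\alpha_A)|\mu|^2 + \alpha_A|\theta|^2$ and $|\theta|^2 \leqslant 2A$ we get $v \leqslant u + \wi\alpha(u)(1 - u) \cdot (\text{something} \leqslant 1)$, and in fact since $|\theta|^2 \geqslant |\mu|^2$ need not hold one has to be a bit careful, but the design of the profiles in Lemma \ref{t33.3} — in particular the condition $\wi\alpha(t) + \wi\phi(t) \geqslant 29/27 > 1$ on $[1/3, 2/3]$, together with $\wi\phi(t) = 1 - t^3$ for $t \leqslant 4/11$ and the fact that $\wi\alpha(u) = u^2$ for $u \leqslant 1/3$ is dominated appropriately — is exactly what makes $\wi\phi(v) + \wi\alpha(u) > 1$ hold. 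I would break into the ranges $u \leqslant 1/3$, $1/3 \leqslant u \leqslant 2/3$, $u \geqslant 2/3$ (and correspondingly track where $v$ falls), using in the first range that $v$ is close to $u$ up to a controlled perturbation so $\wi\phi(v) \approx 1 - u^3 \geqslant 1 - u^2 = 1 - \wi\alpha(u)$ with the cubic-vs-quadratic gap beating the error when $A$ is large, in the middle range the explicit constant $29/27$, and in the last range that $\wi\alpha(u) = 1$ while $\wi\phi(v) > 0$. Collecting the constraints on $A$ from all three ranges and from \eqref{a.100}, \eqref{aa4.63}--\eqref{aa4.65} yields the threshold $A_6$.
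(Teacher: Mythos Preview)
Your overall strategy matches the paper's: case analysis on the size of $|\mu|^2$, the scaling observations $\alpha_A',\phi_A' = \mO_0(A^{-1})$ and $|\theta|^2-|\mu|^2 = \mO_0(A^{1/2})$, and the constant $29/27$ in the middle range for \eqref{aa4.77}. But the ``factoring'' steps for \eqref{aa4.64} and \eqref{aa4.65} contain a genuine gap, and there is a sign of confusion between the two boundary components.

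First, a correction: $1-\phi_A(\beta_A)$ vanishes on $\mM_1$, not on $\mM_2$ (on $\mM_1$ one has $\beta_A=A$ and $\wi\phi(0)=1$; on $\mM_2$ one has $\phi_A(\beta_A)=0$). Your remark ``wherever $1-\phi_A(\beta_A)$ could be comparable to the error we are near $\mM_2$'' has the components swapped, and this is exactly the region where your argument has to be done carefully.

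For \eqref{aa4.64}: the correction term $-2\phi_A'(\beta_A)\alpha_A(|\mu|^2)\langle\rho_A,\eta\rangle$ is $\mO_0(A^{-1/2})$ \emph{additively}, but that alone does not let you factor out $1-\phi_A(\beta_A)$, which vanishes to order $u^3$ (with $u=|\mu|^2/A-1$) near $\mM_1$. What the paper uses is that in this region $\alpha_A(|\mu|^2)=u^2$ and $\phi_A'(\beta_A)=-\tfrac{3}{A}(\beta_A/A-1)^2$, so the correction vanishes like $u^4 A^{-1/2}$, one order in $u$ better than $1-\phi_A(\beta_A)\sim u^3$; only then does the ratio become $\mO_0(A^{-1/2})$. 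This is where the specific choice $\wi\alpha(t)=t^2$, $1-\wi\phi(t)=t^3$ near $0$ is essential, and your sketch does not invoke it.

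For \eqref{aa4.65}: substituting \eqref{aa4.63}--\eqref{aa4.64} into $\tau_{A5}=\tau_{A1}\tau_{A4}-\alpha_A\tau_{A2}$ gives
\[
\tau_{A5}=\big(1-\phi_A(\beta_A)-\alpha_A(|\mu|^2)\big)+\big(1-\phi_A(\beta_A)\big)\,\mO_0(A^{-1/2})+\alpha_A(|\mu|^2)\,\mO_0(A^{-1/2}),
\]
and near $\mM_2$ the first bracket tends to $0$ while the last two errors are each of size $\mO_0(A^{-1/2})$ (since $1-\phi_A\to 1$ and $\alpha_A=1$ there); so ``checking the error terms combine correctly'' fails by this route. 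The paper instead observes that for $|\mu|^2\geqslant 5A/3$ one has $\alpha_A=1$ and $\alpha_A'=0$ \emph{exactly}, whence $\tau_{A5}=-\phi_A(\beta_A)$ with no error at all (equation \eqref{5.40}). That exact identity, together with the $u$-power counting near $\mM_1$, is what makes the multiplicative form \eqref{aa4.65} hold.

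Two smaller points: your claim $t=|\mu|^2/A-1\in[0,1]$ is not quite right, since $|\mu|$ can exceed $\sqrt{2A}$ on $\mM$ (only $|\mu|\leqslant\sqrt{2A}+C_0$ is available); it does not matter because $\wi\alpha\equiv 1$ for $t\geqslant 2/3$. And your convex-combination argument for \eqref{a.100}, while elegant, does not by itself give $\beta_A<2A$ when $|\mu|^2>2A$; you still need the observation that $\alpha_A(|\mu|^2)=1$ there, so $\beta_A=|\theta|^2<2A$ exactly, which is again the paper's case split at $|\mu|^2=5A/3$.
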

\begin{proof}
From  (\ref{2.3}), (\ref{2.12}) and (\ref{33.0}),
for $A> A_2\geqslant A_{0}$, we have on $\mM$,
\begin{align}\label{a.103}\begin{split}
& A^{1/2}\leqslant |\mu|\leqslant |\theta|+|\eta|
\leqslant  \sqrt{2}A^{1/2}+\left|C_0\right|
\leqslant \left( 2\sqrt{2}-\sqrt{5/3}\right)A^{1/2},\\
&|\theta|^2-|\mu|^2  =2\, \langle \mu,\eta\rangle+|\eta|^2=\mO_0(A^{1/2}).
\end{split}\end{align}
From  
(\ref{a.01})-(\ref{a5.5}) and (\ref{a.103}), for $A> A_2$, we have on $\mM$,
 \begin{align}\label{aa5.66}
    \begin{split}
 \alpha_{A}\left(|\mu|^2\right)=\mO_0(1),\quad
   \beta_{A}=|\mu|^2 + \mO_0(A^{1/2}),\quad  
\alpha_{A}'\left(|\mu|^2\right)=\phi_{A}'(\beta_{A}) =\mO_0(A^{-1}) .
 \end{split}\end{align}

If $|\mu|^2\leqslant \frac{4A}{3} $, then  (\ref{a.01}),  (\ref{a.02}),
 (\ref{a5.5}) and (\ref{a.103}) yield
\begin{align}\label{aa4.66}
    \begin{split}
\frac{\beta_{A}}{A}-1 &= \left(\frac{|\mu|^2}{A}-1\right) \left[ 1+
\frac{1}{A}\left(\frac{|\mu|^2}{A}-1\right)
\left(|\theta|^2-|\mu|^2\right) \right]\\
&=  \left(\frac{|\mu|^2}{A}-1\right)\Big(1+
\mO_0\left(A^{-1/2}\right)\Big).
\end{split}\end{align}

If $|\mu|^2\geqslant  \frac{4A}{3} $, then by (\ref{aa5.66}),
we have for $A> A_2$ large enough,
\begin{align}\label{aa4.67}
\beta_{A}\geqslant \frac{4}{3} A+  \mO_0(A^{1/2})>\frac{6A}{5}.
\end{align}
By (\ref{2.12}), (\ref{aa4.66}) and (\ref{aa4.67}),
 we have $\beta_{A}>A$ on $\mM\setminus \partial\mM$
 for $A> A_2$ large enough.

On the other hand, if  $|\mu|^2\leqslant \frac{5A}{3}$, then by
(\ref{aa5.66}),   for $A> A_2$  large enough,
$\beta_{A}<2A$.


By (\ref{a.01}), (\ref{a.02}) and (\ref{a5.5}), 
if $|\mu|^2\geqslant \frac{5A}{3}$,
\begin{align}\label{33.21}
\alpha_{A}(|\mu|^2)=1,\quad  \alpha_{A}'(|\mu|^2) =0,
\quad   \beta_{A}=|\theta|^2.
\end{align}
Combining with (\ref{2.12})  
we have $\beta_{A}<2A$ on
$\mM\setminus \partial\mM$ for $A> A_2$ large enough.
Thus there exists $A_7\geqslant A_2$ such that (\ref{a.100}) holds
for $A> A_7$.
Note that $\wi{\phi}(0)=1, \wi{\phi}(1)=0$ and $ \wi{\phi}'< 0$ 
on $(0,1]$.
Thus (\ref{a.02}) and (\ref{a.100}) imply
(\ref{a.101}).

The first identity in (\ref{aa4.63}) follows immediately from
(\ref{aa4.26}), (\ref{a.103}) and (\ref{aa5.66}).

From 
(\ref{33.5}), (\ref{a.101}) and (\ref{a.103}), 
we obtain for $A> A_7$,
\begin{align}\label{aa5.67}
     |\rho_{A}|\leqslant |\theta|+ |\eta|< 2A^{1/2}  \quad \text{ on } \mM.
 \end{align}
From (\ref{33.0}),  
\eqref{aa4.26},  the first identity  in \eqref{aa4.63}, \eqref{aa5.66} and 
  \eqref{aa5.67},
we get the second identity in \eqref{aa4.63}.
Hence the proof of \eqref{aa4.63} is completed.

We prove now \eqref{aa4.64}.
If $|\mu|^2\leqslant \frac{4A}{3} $, then by 
(\ref{aa5.66}), we have $\beta_{A}<\frac{15A}{11}$ for $A> A_7$
large enough. Then  (\ref{a.01}), (\ref{a.02}) and (\ref{aa4.66}) imply
  \begin{align}\label{aa4.68}  \begin{split}
\alpha_{A}\left(|\mu|^2\right)&  =\left(\frac{|\mu|^2}{A}-1\right)^2,\\
 1-\phi_{A}(\beta_{A}) &= \Big(\frac{\beta_{A}}{A}-1\Big)^3
= \left(\frac{|\mu|^2}{A}-1\right)^{3}\Big(1+
 \mO_0\left(A^{-1/2}\right)\Big) , \\
 \phi_{A}'(\beta_{A}) &=  - \frac{3}{A} \Big(\frac{\beta_{A}}{A}-1\Big)^2 .
   \end{split}\end{align}
  From (\ref{33.0}), (\ref{aa4.26}), (\ref{aa4.66}),
(\ref{aa5.67}) and \eqref{aa4.68}, we deduce that
\begin{align}\label{aa4.69}
 \begin{split}
 \tau_{A4} 
  =& \Big(1-\phi_{A}(\beta_{A})\Big)
\left[1+\frac{6}{A}\cdot \Big(\frac{|\mu|^2}{ A}-1\Big)
  \Big(1+\mO_0\left(A^{-1/2}\right)\Big)
\langle\rho_{A},\eta\rangle\right]\\
=& \Big(1-\phi_{A}(\beta_{A})\Big)\Big(1+\mO_0\left(A^{-1/2}\right)\Big).
\end{split}\end{align}

If $|\mu|^2\geqslant \frac{4A}{3}$,  
by (\ref{a.01}), (\ref{a.02}) and (\ref{aa4.67}), we have
$1-\phi_{A}(\beta_{A})\geqslant 1-\phi_{A}(\frac{6A}{5})= 5^{-3}>0$, from
which \eqref{aa4.64} holds, since in view of
(\ref{33.0}), (\ref{aa5.66}) and  (\ref{aa5.67}),
$\phi_{A}'(\beta_{A})\alpha_{A}\left(|\mu|^2\right)$ 
$\langle\rho_{A},\eta\rangle=\mO_0(A^{-1/2})$ holds.
Together with (\ref{aa4.69}), this implies  \eqref{aa4.64}.

For the proof of (\ref{aa4.65}) and (\ref{aa4.77}), we first
consider the region $|\mu|^2\geqslant \frac{5A}{3}$ in $\mM$.
By \eqref{aa4.26} and \eqref{33.21}, we get
\begin{align}\label{5.40}
\tau_{A5} = 1 - \phi_{A}(\beta_{A}) - \alpha_{A}(|\mu|^2) 
= - \phi_{A}(\beta_{A}) .
\end{align}
Thus (\ref{aa4.65}) holds. 
From (\ref{33.9c}), (\ref{a.101}) and (\ref{5.40}), we get (\ref{aa4.77}).

By (\ref{a.01}),  (\ref{a.02x})  and (\ref{aa5.66}),
we find that for $A> A_7$,
\begin{align}\label{a.22}
 \phi_{A}(\beta_{A})=\phi_{A}\left(|\mu|^2\right)
+ \mO_0\left(A^{-1/2}\right)  \quad \mbox{  on  } \mM.
\end{align}
\comment{
Let $0<\epsilon_0<\frac{1}{ 9}$ be such that
\begin{align}\label{a24}\wi{\alpha}(t)+&\wi{\phi_{A}}(t) 
\geqslant \frac{28.5}{27},
 \quad\quad\text{ for } \,\, \, \frac{1-2\epsilon_0}{3}\leqslant
t\leqslant \frac{2}{3}.
\end{align}
Such an  $\epsilon_0$ clearly exists in view of  (\ref{a.01}).}
If $\frac{4A}{3}\leqslant|\mu|^2 \leqslant \frac{5A}{3}$,
then from \eqref{a.01} and \eqref{a.22}, 
we have for $A$ large enough,
\begin{align}\label{a25}
1-\phi_{A}(\beta_{A})-\alpha_{A}\left(|\mu|^2\right)\leqslant -\frac{1}{27}.
\end{align}
By  \eqref{aa4.26}, \eqref{a.103},   (\ref{aa5.66}) and  \eqref{a25}, 
we get  \eqref{aa4.65} and (\ref{aa4.77}).

Finally, if $|\mu|^2\leqslant \frac{4A}{3}$, 
by (\ref{aa4.26}), the first equation of (\ref{aa4.63})
and (\ref{aa4.68}), the following identities hold 
for $A> A_7$ large enough:
\begin{align}\label{a23} \begin{split}
1-&\phi_{A}(\beta_{A})-\alpha_{A}(|\mu|^2) \\
&= - \Big(\frac{|\mu|^2}{A}-1\Big)^2
\Big[ 1-  \Big(\frac{|\mu|^2}{A}-1\Big)\Big(1+ \mO_0(A^{-1/2})\Big)\Big],\\
\tau_{A5}&=\Big[1-\phi_{A}(\beta_{A})\Big]
\Big(1+\mO_0\left(A^{-1/2}\right) \Big) -\alpha_{A}\left(|\mu|^2\right)\\
&=\Big[1-\phi_{A}(\beta_{A})-\alpha_{A}\left(|\mu|^2\right)\Big]
\Big(1+\mO_0\left(A^{-1/2}\right)\Big).
\end{split}\end{align}
From \eqref{33.8c} and the first identity
in (\ref{a23}), we get  \eqref{aa4.77} in this case.

Combining the three cases discussed above, we conclude that
there exists $A_6\geqslant A_7$ such that 
\eqref{aa4.65} and \eqref{aa4.77} hold for $A> A_6$.
The proof of Lemma \ref{ta5.10} is   completed.
\end{proof}

The following Lemma will also be used in the proof of Lemma \ref{t33.6}.
\begin{lemma}\label{t5.6}
There exists $A_8\geqslant A_6$ such that for any $A> A_8$,
\begin{align}\label{b4.96}
1< \frac{\Big(1-\phi_{A}(\beta_{A})\Big)^2 -\alpha_{A}(|\mu|^2)}
{1-\phi_{A}(\beta_{A})-\alpha_{A}(|\mu|^2)} < 12 \quad \text{ on } \mM\setminus
\partial \mM.
\end{align}
\end{lemma}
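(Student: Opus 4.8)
The plan is to prove the two-sided bound by analyzing the quotient region-by-region on $\mM$, exactly as in the proof of Lemma \ref{ta5.10}, and to exploit the elementary algebraic fact that for $u = 1-\phi_{A}(\beta_{A}) \in (0,1)$ and $v = \alpha_{A}(|\mu|^2) \in [0,1)$ one has $\frac{u^2 - v}{u - v} = 1 + \frac{u^2 - u}{u - v} = 1 - \frac{u(1-u)}{u-v}$. Since $u-v = 1-\phi_{A}(\beta_{A})-\alpha_{A}(|\mu|^2) < 0$ on $\mM\setminus\partial\mM$ by \eqref{aa4.77}, and $u(1-u) > 0$ because $u\in(0,1)$ by \eqref{a.101}, the correction term $-\frac{u(1-u)}{u-v}$ is strictly positive, which immediately gives the lower bound $> 1$. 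So the content of the lemma is really the \emph{upper} bound, i.e. a uniform lower bound (away from $0$) on $|u-v|$ relative to $u(1-u)$ — equivalently, one must control $\frac{u(1-u)}{v-u}$ from above by $11$ for $A$ large.

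First I would record the identity $\frac{u^2-v}{u-v} = 1 + \frac{u(1-u)}{v-u}$ and reduce the problem to showing $\frac{u(1-u)}{v-u} < 11$ on $\mM\setminus\partial\mM$ for $A> A_8$. Then I would split $\mM$ into the same three zones used in Lemma \ref{ta5.10}: (i) $|\mu|^2 \leqslant \frac{4A}{3}$, (ii) $\frac{4A}{3} \leqslant |\mu|^2 \leqslant \frac{5A}{3}$, and (iii) $|\mu|^2 \geqslant \frac{5A}{3}$. In zone (iii), \eqref{33.21} gives $v = \alpha_{A}(|\mu|^2) = 1$ and $\beta_{A} = |\theta|^2$, so $v - u = 1 - (1 - \phi_{A}(\beta_{A})) + \ldots$; more precisely here $u - v = -\phi_{A}(\beta_{A})$ by \eqref{5.40}, hence $v-u = \phi_{A}(\beta_{A})$ and $u(1-u) = (1-\phi_{A}(\beta_{A}))\phi_{A}(\beta_{A})$, so the ratio is exactly $1-\phi_{A}(\beta_{A}) < 1$, which is comfortably below $11$. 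In zone (ii), \eqref{a25} shows $v-u = \alpha_{A}(|\mu|^2) + \phi_{A}(\beta_{A}) - 1 \geqslant \frac{1}{27}$ for $A$ large, while $u(1-u) \leqslant \frac14$, so the ratio is bounded by $\frac{27}{4} < 11$ for $A$ large.

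The main work — and the step I expect to be the genuine obstacle — is zone (i), $|\mu|^2 \leqslant \frac{4A}{3}$, where both $u$ and $v$ degenerate to $0$ as $|\mu|^2 \to A^+$. Here I would set $t = \frac{|\mu|^2}{A} - 1 \in (0, \tfrac13]$ on $\mM\setminus\partial\mM$ (using $\beta_{A} > A$ from \eqref{a.100} and \eqref{aa4.66} to pass between $\beta_A$ and $|\mu|^2$), so that by \eqref{aa4.68}, $v = \alpha_{A}(|\mu|^2) = t^2$ and $u = 1 - \phi_{A}(\beta_{A}) = t^3(1 + \mO_0(A^{-1/2}))$. Then $v - u = t^2\big(1 - t(1+\mO_0(A^{-1/2}))\big)$, which for $t \leqslant \tfrac13$ is $\geqslant t^2(1 - \tfrac13 - \mO_0(A^{-1/2})) \geqslant \tfrac12 t^2$ for $A$ large; meanwhile $u(1-u) \leqslant u = t^3(1 + \mO_0(A^{-1/2})) \leqslant 2t^3$ for $A$ large. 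Hence $\frac{u(1-u)}{v-u} \leqslant \frac{2t^3}{(1/2)t^2} = 4t \leqslant \tfrac43 < 11$. Choosing $A_8 \geqslant A_6$ large enough that all the $\mO_0(A^{-1/2})$ error estimates above are in force simultaneously on all three zones, the upper bound $< 12$ (with room to spare) follows on $\mM\setminus\partial\mM$, and together with the lower bound $> 1$ established at the outset this proves Lemma \ref{t5.6}. The only delicate point is making sure the $\mO_0$-estimates and the asymptotics $v = t^2$, $u = t^3(1+\mO_0(A^{-1/2}))$ are genuinely uniform down to $t = 0$ (equivalently, uniformly near $\partial\mM$), which is exactly what \eqref{aa4.66} and \eqref{aa4.68} provide.
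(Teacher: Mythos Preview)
Your proof is correct and follows essentially the same strategy as the paper: the same lower-bound observation (numerator $<$ denominator $<0$ via \eqref{a.101} and \eqref{aa4.77}) and the same three-zone decomposition for the upper bound, invoking \eqref{33.21}, \eqref{a25}, and \eqref{aa4.68} respectively. The only difference is cosmetic: the paper rewrites the upper bound as the equivalent inequality $11 - 10\phi_{A}(\beta_{A}) - 11\alpha_{A}(|\mu|^2) - \phi_{A}(\beta_{A})^2 < 0$ and checks it zone-by-zone, whereas you use the identity $\frac{u^2-v}{u-v} = 1 + \frac{u(1-u)}{v-u}$ to reduce directly to bounding $\frac{u(1-u)}{v-u}$ by $11$ --- arguably a slightly cleaner bookkeeping, but the underlying estimates and the overall argument are the same. (One inessential slip: you wrote $v \in [0,1)$, but $v=1$ in zone (iii); your zone-by-zone analysis handles this correctly anyway.)
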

\begin{proof} By  \eqref{a.101} and \eqref{aa4.77}, we have
\begin{align}\label{b4.97}
\Big(1-\phi_{A}(\beta_{A})\Big)^2 -\alpha_{A}(|\mu|^2)
< 1-\phi_{A}(\beta_{A})-\alpha_{A}(|\mu|^2)<0
\text{ on } \mM\setminus \partial \mM.
\end{align}

To complete the proof of \eqref{b4.96},  we have to show that
\begin{align}\label{aa4.97}
    11- 10 \phi_{A}(\beta_{A})  - 11\alpha_{A}(|\mu|^2)
    - \phi_{A}(\beta_{A})^2<0
       \quad  \text{on }   \mM\setminus \partial \mM.
\end{align}
We examine three cases.
First, if $|\mu|^2\geqslant \frac{5A}{3}$,
then  \eqref{aa4.97} follows from \eqref{a.101} and \eqref{33.21}.
Secondly,  
if $|\mu|^2\leqslant \frac{4A}{3}$,
 then by 
 \eqref{aa4.68},  we get 
\begin{multline}\label{aa4.98}
11- 10 \phi_{A}(\beta_{A}) - 11\alpha_{A}\left(|\mu|^2\right)
- \phi_{A}(\beta_{A})^2
\leqslant - 11  \alpha_{A}\left(|\mu|^2\right)
+ 12 \Big(1-\phi_{A}(\beta_{A})\Big) \\
\leqslant \Big(\frac{|\mu|^2}{A}-1\Big)^2 \left[-7
+\mO_0\left(A^{-1/2}\right)\right].
\end{multline}
By 
\eqref{aa4.98}, we see that \eqref{aa4.97} holds for $A$ large enough.

Thirdly, let $\frac{4A}{3}\leqslant |\mu|^2\leqslant \frac{5A}{3}$,
from \eqref{a25} for $A>0$ large enough, we have
\begin{multline}\label{aa4.99}
    11- 10 \phi_{A}(\beta_{A})
       - 11\alpha_{A}(|\mu|^2)- \phi_{A}(\beta_{A})^2
     \leqslant - \frac{11}{27} + \phi_{A}(\beta_{A})- \phi_{A}(\beta_{A})^2 
 \leqslant  - \frac{17}{108}.
 \end{multline}
This completes the proof of Lemma \ref{t5.6}.
\end{proof}

By (\ref{aa4.63}), we may and we will assume that $A$ is large
enough so that $\tau_{A2}>1/2.$
Set
 \begin{align}\label{aa4.28}
\begin{split}
\tau_{A6}=&  - 2  \phi_{A}'(\beta_{A})\alpha_{A}''(|\mu|^2)
\langle\rho_{A},\eta\rangle
\Big(|\theta|^2  -|\mu|^2\Big) \left(\frac{\tau_{A4}}{\tau_{A2}}\right)^2\\
 &\hspace{10mm}+ 4 \phi_{A}'(\beta_{A})\alpha_{A}'(|\mu|^2)  
 \langle\rho_{A},\eta\rangle \frac{\tau_{A4}}{\tau_{A2}}\\
& + 2 \Big[- \phi_{A}''(\beta_{A})\langle\rho_{A},\eta\rangle 
+ (\phi_{A}'(\beta_{A}))^2
|\eta|^2 \Big] \left(\frac{\tau_{A5}}{\tau_{A2}}\right)^2
 +  2 \phi_{A}'(\beta_{A})\frac{\tau_{A5}}{\tau_{A2}},\\
\tau_{A7}=&   2\phi_{A}'(\beta_{A}) \alpha_{A}'(|\mu|^2)
\langle\rho_{A},\eta\rangle \frac{(\tau_{A2}-\tau_{A4})\tau_{A4}}
{\tau_{A2}^2} \\
& - 2\Big[ - \phi_{A}''(\beta_{A})
\langle\rho_{A},\eta\rangle +
 (\phi_{A}'(\beta_{A}) )^2 |\eta|^2 \Big]
\alpha_{A}(|\mu|^2)\frac{(\tau_{A2} -\tau_{A4})\tau_{A5}}{\tau_{A2}^2}\\
&+\phi_{A}'(\beta_{A})\left[ \Big(\frac{\tau_{A2} - \tau_{A4}}{\tau_{A2}} +
1-2\phi_{A}(\beta_{A})\Big)\frac{\tau_{A5}}{\tau_{A2}} - \alpha_{A}(|\mu|^2)
\frac{\tau_{A2}-\tau_{A4}}{\tau_{A2}}\right] .
\end{split}\end{align}

\begin{lemma}  \label{ta5.11}  For $A>0$  large enough, the
following identities hold on $\mM$:
\begin{align}\label{aa4.72} \begin{split}
\tau_{A6}=& 2 \phi_{A}'(\beta_{A})
\Big[1-\phi_{A}(\beta_{A})-\alpha_{A}\left(|\mu|^2\right)\Big]
\Big(1+ \mO_0\left(A^{-1/2}\right)\Big),\\
\tau_{A7}=& \phi_{A}'(\beta_{A}) \Big[\Big(1-\phi_{A}(\beta_{A}) \Big)^2
-\alpha_{A}\left(|\mu|^2\right)\Big]
\Big(1+ \mO_0\left(A^{-1/2}\right)\Big).
\end{split}\end{align}
In particular,
\begin{align}\label{aa4.73}\begin{split}
&\tau_{A6}>0, \quad \tau_{A7}>0 \quad \text{ if } (x,y)\in \mM\setminus \partial\mM,\\
&\tau_{A6}=0, \quad \tau_{A7}=0 \quad  \text{ if } (x,y)\in \partial\mM
=\mM_{1}\cup\mM_{2},
\end{split}\end{align}
and 
\begin{align}\label{aa4.73c}
\tau_{A7}\leqslant 6\, \tau_{A6} \left[1+ \mO_0\left(A^{-1/2}\right)\right].
\end{align}
\end{lemma}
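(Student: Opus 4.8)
The plan is to prove the two asymptotic identities in \eqref{aa4.72} first; the sign assertions \eqref{aa4.73} and the comparison \eqref{aa4.73c} then follow with little work. For \eqref{aa4.72} the idea is to single out in each of the expressions \eqref{aa4.28} one \emph{leading} summand — the last one, $2\phi_{A}'(\beta_{A})\,\tau_{A5}/\tau_{A2}$, for $\tau_{A6}$, and the last bracket for $\tau_{A7}$ — to show that it already equals the right-hand side of \eqref{aa4.72}, and that every other summand is $\mO_{0}(A^{-1/2})$ times a quantity comparable to it. To evaluate the leading terms one rewrites $\tau_{A5}/\tau_{A2}$, $\tau_{A4}/\tau_{A2}$ and $(\tau_{A2}-\tau_{A4})/\tau_{A2}$ by means of \eqref{aa4.63}--\eqref{aa4.65}, the standing assumption $\tau_{A2}>1/2$, the relation $\tau_{A2}-\tau_{A4}=\phi_{A}(\beta_{A})+2\phi_{A}'(\beta_{A})\langle\rho_{A},\eta\rangle\big(\alpha_{A}(|\mu|^{2})-\tau_{A1}\big)$ and the bounds $\langle\rho_{A},\eta\rangle=\mO_{0}(A^{1/2})$, $|\theta|^{2}-|\mu|^{2}=\mO_{0}(A^{1/2})$, $|\eta|=\mO_{0}(1)$, and then uses the elementary identity
\[ \big(1-\phi_{A}(\beta_{A})\big)\big(1-\phi_{A}(\beta_{A})-\alpha_{A}(|\mu|^{2})\big)-\alpha_{A}(|\mu|^{2})\,\phi_{A}(\beta_{A})=(1-\phi_{A}(\beta_{A}))^{2}-\alpha_{A}(|\mu|^{2}). \]

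As in the proof of Lemma~\ref{ta5.10} I would split $\mM$ into the three regions $|\mu|^{2}\geqslant\frac{5A}{3}$, $|\mu|^{2}\leqslant\frac{4A}{3}$ and $\frac{4A}{3}\leqslant|\mu|^{2}\leqslant\frac{5A}{3}$, and use the explicit formulas recorded there. In the first, $\alpha_{A}'(|\mu|^{2})=\alpha_{A}''(|\mu|^{2})=0$, $\tau_{A1}=1$, $\tau_{A5}=-\phi_{A}(\beta_{A})$, $\tau_{A2}-\tau_{A4}=\phi_{A}(\beta_{A})$, and since $|\theta|\geqslant|\mu|-C_{0}$ one has $\beta_{A}/A-1=|\theta|^{2}/A-1\geqslant\frac{2}{3}-\mO_{0}(A^{-1/2})$, whence $|\phi_{A}'(\beta_{A})|\geqslant c/A$ and $|\phi_{A}''(\beta_{A})|\leqslant C/A^{2}$ for $A$ large. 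In the third, $|\mu|^{2}/A-1\in[\frac{1}{3},\frac{2}{3}]$, so again $|\phi_{A}'(\beta_{A})|\geqslant c/A$ by \eqref{a.01}, $1-\phi_{A}(\beta_{A})-\alpha_{A}(|\mu|^{2})\leqslant-\frac{1}{27}$ by \eqref{a25}, and $\phi_{A}',\alpha_{A}'$ are $\mO_{0}(A^{-1})$ while $\phi_{A}'',\alpha_{A}''$ are $\mO_{0}(A^{-2})$. In the middle region one has the exact formulas $\alpha_{A}(|\mu|^{2})=u^{2}$, $\alpha_{A}'(|\mu|^{2})=2u/A$, $\alpha_{A}''(|\mu|^{2})=2/A^{2}$, $\beta_{A}/A-1=u(1+\mO_{0}(A^{-1/2}))$, $1-\phi_{A}(\beta_{A})=u^{3}(1+\mO_{0}(A^{-1/2}))$, $\phi_{A}'(\beta_{A})=-\frac{3}{A}(\beta_{A}/A-1)^{2}$, $\phi_{A}''(\beta_{A})=-\frac{6}{A^{2}}(\beta_{A}/A-1)$, with $u=|\mu|^{2}/A-1\in[0,\frac{1}{3}]$. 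Substituting into \eqref{aa4.28}, the term-by-term verification is routine except near $\mM_{1}$.

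The delicate point, and the main obstacle, is the behaviour as $u\to0$ (approach to $\mM_{1}$): there the leading terms of $\tau_{A6}$ and $\tau_{A7}$ are of order $u^{4}/A$, $\phi_{A}'(\beta_{A})$ of order $u^{2}/A$, and $\tau_{A4}$, $\tau_{A5}$ of orders $u^{3}$ and $u^{2}$ (like $1-\phi_{A}(\beta_{A})$ and $1-\phi_{A}(\beta_{A})-\alpha_{A}(|\mu|^{2})$ respectively), while $\tau_{A2}-\tau_{A4}=\phi_{A}(\beta_{A})(1+\mO_{0}(A^{-1/2}))$ stays bounded away from $0$. One then checks that each non-leading summand of \eqref{aa4.28} carries extra vanishing — an extra power of $\tau_{A4}$ or of $\tau_{A5}$, or a factor $\alpha_{A}'(|\mu|^{2})$ — which, balanced against the growth of $\phi_{A}''(\beta_{A})/\phi_{A}'(\beta_{A})$ near $\mM_{1}$, makes it $\mO_{0}(A^{-1/2})$ times $\phi_{A}'(\beta_{A})\big(1-\phi_{A}(\beta_{A})-\alpha_{A}(|\mu|^{2})\big)$, resp. $\phi_{A}'(\beta_{A})\big((1-\phi_{A}(\beta_{A}))^{2}-\alpha_{A}(|\mu|^{2})\big)$. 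Here Lemma~\ref{t5.6} is used to pass between these two comparable quantities, together with the similar bounded comparison $\alpha_{A}(|\mu|^{2})\phi_{A}(\beta_{A})\leqslant C\big(\alpha_{A}(|\mu|^{2})-(1-\phi_{A}(\beta_{A}))^{2}\big)$ on $\mM$, which absorbs the cross term produced by the identity above. Making sure that no summand of \eqref{aa4.28} is only borderline small is the crux; the rest is bookkeeping with the formulas inherited from Lemma~\ref{ta5.10}.

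It remains to deduce \eqref{aa4.73} and \eqref{aa4.73c}. On $\mM\setminus\partial\mM$ we have $\phi_{A}'(\beta_{A})<0$ (by \eqref{a.100}, $\beta_{A}/A-1\in(0,1)$, and $\wi\phi'<0$ on $(0,1]$), $1-\phi_{A}(\beta_{A})-\alpha_{A}(|\mu|^{2})<0$ by \eqref{aa4.77}, and $(1-\phi_{A}(\beta_{A}))^{2}-\alpha_{A}(|\mu|^{2})<0$ by \eqref{b4.97}, so \eqref{aa4.72} gives $\tau_{A6}>0$ and $\tau_{A7}>0$; on $\partial\mM$ every summand of \eqref{aa4.28} vanishes, since on $\mM_{1}$ one has $\phi_{A}'(\beta_{A})=\phi_{A}''(\beta_{A})=0$ by \eqref{33.8c}, and on $\mM_{2}$ one has $\alpha_{A}'(|\mu|^{2})=\tau_{A5}=0$ and $\tau_{A2}=\tau_{A4}$ by \eqref{33.9c}. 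Finally \eqref{aa4.73c} follows from \eqref{aa4.72} and Lemma~\ref{t5.6}: on $\mM\setminus\partial\mM$,
\[ \frac{\tau_{A7}}{\tau_{A6}}=\frac{1}{2}\cdot\frac{(1-\phi_{A}(\beta_{A}))^{2}-\alpha_{A}(|\mu|^{2})}{1-\phi_{A}(\beta_{A})-\alpha_{A}(|\mu|^{2})}\cdot\big(1+\mO_{0}(A^{-1/2})\big)<6\big(1+\mO_{0}(A^{-1/2})\big), \]
and on $\partial\mM$ both sides vanish.
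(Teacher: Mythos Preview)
Your proposal is correct and follows essentially the same route as the paper: the same three-region split in $|\mu|^{2}$, the same identification of the leading summands of \eqref{aa4.28} (the last term for $\tau_{A6}$, the last bracket for $\tau_{A7}$), and the same use of Lemma~\ref{ta5.10} together with the comparability bounds \eqref{b4.96}--\eqref{b4.97} and the inequality $|\alpha_{A}(|\mu|^{2})|\leqslant C\,|(1-\phi_{A}(\beta_{A}))^{2}-\alpha_{A}(|\mu|^{2})|$ (the paper's \eqref{33.44}) to absorb the remaining summands. Your derivation of \eqref{aa4.73} by directly checking that every summand of \eqref{aa4.28} vanishes on $\partial\mM$ is a slight variant of the paper's argument (which reads it off from \eqref{aa4.72} and \eqref{aa4.77}), but both are equally valid.
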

\begin{proof} 
Note that from  \eqref{33.0}, \eqref{a.01}, \eqref{a.02},
 \eqref{aa5.66} and \eqref{aa5.67},   on $\mM$, we have
 \begin{align}\label{aa4.74c} \begin{split}
&  \alpha_{A}'\left(|\mu|^2\right)  \langle\rho_{A},\eta\rangle 
  = \mO_{0}(A^{-1/2}),\quad 
 \alpha_{A}''\left(|\mu|^2\right)  \langle\rho_{A},\eta\rangle 
  = \mO_{0}(A^{-3/2}),\\
&     - \phi_{A}''(\beta_{A})
 \langle\rho_{A},\eta\rangle +
  (\phi_{A}'(\beta_{A}) )^2 |\eta|^2 =\mO_{0}(A^{-3/2}).
 \end{split}\end{align}    
Recall that $\wi{\phi}'<0$ on $(0,1]$.
By \eqref{a.02x}, \eqref{a.100}
and the second equation of \eqref{aa5.66}, 
there exist $C>0$, $A_{10}>0$ such that for $A>A_{10}$, 
\begin{align}\label{5.70c} \begin{split}
&\phi_{A}'(\beta_{A}) <0 \quad \text{ on } \mM\setminus \partial \mM,\\
&\left|\phi_{A}'(\beta_{A}) \right|\geqslant \frac{C}{A} \quad \text{ if }
|\mu|^2\geqslant \frac{4A}{3}.
 \end{split}\end{align}    

By Lemma \ref{ta5.10}, \eqref{a.103},  
 (\ref{aa4.28}) and (\ref{aa4.74c}), we get
\begin{multline}\label{aa4.74}
\tau_{A6}=  \phi_{A}'(\beta_{A})\mO_0\left(A^{-1}\right) 
\Big[1-\phi_{A}(\beta_{A})\Big]^2
+ \phi_{A}'(\beta_{A})\mO_0\left(A^{-1/2}\right) 
\Big[1-\phi_{A}(\beta_{A})\Big]\\
+ \mO_{0}(A^{-3/2})
\Big[1-\phi_{A}(\beta_{A})-\alpha_{A}\left(|\mu|^2\right)\Big]^2 \\
+ 2\phi_{A}'(\beta_{A})\Big[1-\phi_{A}(\beta_{A})
-\alpha_{A}\left(|\mu|^2\right)\Big]
\Big(1+ \mO_0\left(A^{-1/2}\right)\Big).
\end{multline}
By \eqref{aa4.66}, \eqref{aa4.68} and the first equation of \eqref{a23},  
there exists $C>0$ such that for  $A> 0$ large enough, 
if $|\mu|^2\leqslant \frac{4A}{3}$, then
\begin{align}\label{5.71}\begin{split}
&0\leqslant 1-\phi_{A}(\beta_{A})
\leqslant C \left|1-\phi_{A}(\beta_{A})-\alpha_{A}\left(|\mu|^2\right)\right|,\\
& \left|1-\phi_{A}(\beta_{A})-\alpha_{A}\left(|\mu|^2\right)\right|\leqslant C
\left|A\phi_{A}'(\beta_{A})\right| .
\end{split}\end{align}
Due to \eqref{a.101}, \eqref{aa5.66}, (\ref{a25}) and \eqref{5.70c}, 
if $\frac{4A}{3}\leqslant |\mu|^2\leqslant \frac{5A}{3}$,
(\ref{5.71}) still  holds  for some constant $C>0$.
By \eqref{5.71}, the first three terms in \eqref{aa4.74} 
can be controlled by $\Big|\phi_{A}'(\beta_{A})$ 
$\Big[1-\phi_{A}(\beta_{A})-\alpha_{A}\left(|\mu|^2\right)\Big]\Big| 
\mO_0\left(A^{-1/2}\right)$
if $|\mu|^2\leqslant \frac{5A}{3}$. 
Thus from \eqref{aa4.74},
the first identity  in \eqref{aa4.72} holds when
$|\mu|^2\leqslant \frac{5A}{3}$.

For $|\mu|^2\geqslant \frac{5A}{3}$, by 
(\ref{33.21}), $\alpha_{A}''(|\mu|^2)=\alpha_{A}'(|\mu|^2)=0$,
thus the first two terms of $\tau_{A6}$ are zero. 
By (\ref{aa4.63})-(\ref{aa4.65}), 
\eqref{5.40}, \eqref{aa4.28} and the third equation in \eqref{aa4.74c}, 
we have
\begin{align}\label{aa4.78} \begin{split}
\tau_{A6}&=\mO_0\left(A^{-3/2}\right)\phi_{A}(\beta_{A})^2
- 2\phi_{A}'(\beta_{A}) \phi_{A}(\beta_{A}) 
\Big(1+ \mO_0\left(A^{-1/2}\right)\Big).
\end{split}\end{align}
From \eqref{a.101}, \eqref{5.40}, (\ref{5.70c}) and \eqref{aa4.78},
  the first identity in \eqref{aa4.72} holds 
when $|\mu|^2\geqslant \frac{5A}{3}$.

From \eqref{aa4.77}, the first identity in \eqref{aa4.72} and (\ref{5.70c}),  
we get \eqref{aa4.73} for  $\tau_{A6}$.

For the second identity in \eqref{aa4.72}, 
by Lemma \ref{ta5.10} 
and \eqref{aa4.74c},  
we obtain the asymptotics of the terms of $\tau_{A7}$ in \eqref{aa4.28}
in order as follows :
\begin{multline}\label{aa4.80}
\tau_{A7}=\phi_{A}'(\beta_{A}) \Big[1-\phi_{A}(\beta_{A}) \Big]
\mO_0\left(A^{-1/2}\right) \\
+\alpha_{A}\left(|\mu|^2\right) \Big[1-\phi_{A}(\beta_{A})-
\alpha_{A}\left(|\mu|^2\right)  \Big] \mO_0(A^{-3/2}) \\
+ \phi_{A}'(\beta_{A}) \Big\{\Big(1-\phi_{A}(\beta_{A})
+ \mO_0\left(A^{-1/2}\right)\Big)
\Big[1-\phi_{A}(\beta_{A})-\alpha_{A}\left(|\mu|^2\right)  \Big]\\  -
\alpha_{A}\left(|\mu|^2\right) \Big(\phi_{A}(\beta_{A}) +
\mO_0\left(A^{-1/2}\right) \Big)\Big\},
\end{multline}
here the factor $1-\phi_{A}(\beta_{A})$ in the first term of 
\eqref{aa4.80} is from $\tau_{A4}$ and the factor   
$1-\phi_{A}(\beta_{A})-\alpha_{A}\left(|\mu|^2\right) $ is from $\tau_{A5}$.

If $|\mu|^2\leqslant \frac{5A}{3}$, by \eqref{b4.97}, 
the first equation of \eqref{5.71}  (which holds for
$|\mu|^2\leqslant \frac{5A}{3}$ as explained after  \eqref{5.71}),  
we get for $A>0$ large enough,
\begin{align}\label{33.44}
\left|\alpha_{A}\left(|\mu|^2\right)\right|\leqslant (C+1)
\left|(1-\phi_{A}(\beta_{A}))^2-\alpha_{A}\left(|\mu|^2\right)\right|.
\end{align}
Thus by  \eqref{b4.97}, \eqref{5.71} for $|\mu|^2\leqslant \frac{5A}{3}$
and  \eqref{33.44},
the first two terms of  \eqref{aa4.80}  is bounded by 
$\Big|\phi_{A}'(\beta_{A}) 
\Big[ (1-\phi_{A}(\beta_{A}) )^{2}$ 
$-\alpha_{A}\left(|\mu|^2\right)\Big]\Big| \mO_0\left(A^{-1/2}\right)$.
From  \eqref{b4.97},  \eqref{aa4.80}  and \eqref{33.44},
the second identity in \eqref{aa4.72} holds for
$|\mu|^2\leqslant \frac{5A}{3}$.

If $|\mu|^2\geqslant \frac{5A}{3}$, then by \eqref{aa4.26},
\eqref{33.21} and \eqref{5.40}, we have 
\begin{align}\label{5.72}
    \tau_{A1}=1, \quad \tau_{A2}-\tau_{A4}= \phi_{A}(\beta_{A}),
    \quad    \tau_{A5} = -\phi_{A}(\beta_{A}) .
\end{align}
By 
(\ref{aa4.63}), \eqref{33.21}, 
\eqref{aa4.28}, \eqref{aa4.74c}  and \eqref{5.72}, we get
the first term of $\tau_{A7}$ is zero and
\begin{multline}\label{aa4.81}
\tau_{A7}= \phi_{A}(\beta_{A}) ^2 \mO_0\left(A^{-3/2}\right)
+  \phi_{A}'(\beta_{A}) \Big\{ -\phi_{A}(\beta_{A}) ^2
\Big(1+ \mO_0\left(A^{-1/2}\right)\Big)\\
- \Big[1-2\phi_{A}(\beta_{A})\Big]\phi_{A}(\beta_{A}) \Big(1+
\mO_0\left(A^{-1/2}\right)\Big)- \phi_{A}(\beta_{A})\Big(1+
\mO_0\left(A^{-1/2}\right)\Big)\Big\}.
\end{multline}
From \eqref{a.101}, (\ref{5.70c}) and \eqref{aa4.81}, we get that if
$|\mu|^2\geqslant \frac{5A}{3}$,
\begin{align}\label{5.73}
\tau_{A7}=- \phi_{A}'(\beta_{A}) \phi_{A}(\beta_{A}) 
\Big[2-\phi_{A}(\beta_{A})\Big] \Big(1+
\mO_0\left(A^{-1/2}\right)\Big).
\end{align}

Now \eqref{33.21} and \eqref{5.73} imply
 the second identity in \eqref{aa4.72} for $|\mu|^2\geqslant \frac{5A}{3}$.
 By  \eqref{aa4.77}, (\ref{b4.96}) and \eqref{5.70c}, we get  
 \eqref{aa4.73} for $\tau_{A7}$.
 From Lemma \ref{t5.6}, \eqref{aa4.72} and  \eqref{aa4.73}, 
we get \eqref{aa4.73c}. This concludes the proof of Lemma \ref{ta5.11}.
\end{proof}

\subsection{Proof of Lemma \ref{t33.6} (II):
evaluation of $I_{A\cdot}$ over  zero($\psi_{A}^\mM$)}\label{s33.5}

In this subsection, we evaluate the 
terms $I_{A\cdot}$ in \eqref{a.3} on zero($\psi_{A}^\mM$), 
the zero set of $\psi_{A}^\mM$.
The main point is that we use $\eta^{N}$ (resp. $\eta^{M}$)  to replace  
$\mu^{N}$, $\theta^{N}$, $\gamma_{A}^{N}$ 
(resp. $\mu^{M}$, $\gamma_{A}^{M}$) 
which are difficult to control over $\mM$.


\begin{lemma}\label{ta5.6}
    On $\{z\in \mM: \psi_{A}^\mM(z)=0\}$, the following identities hold:
\begin{align}\label{aa4.49}  \begin{split}
   \tau_{A2}\,  \mu^M =& -  {\tau_{A4}}{} \,   \eta^M,
    \quad  \tau_{A2}\,  \gamma_{A}^M =-  {2\tau_{A5}}{}\,\eta^M,
\end{split}\end{align}
and
\begin{align}\label{aa4.51}  \begin{split}
   \tau_{A2} \,  \mu^N =& -  {\tau_{A4}}{} \,  \eta^N,
    \quad  \tau_{A2}\,  \gamma_{A}^N = -  {2\tau_{A5}}{}\,\eta^N,
    \quad {\tau_{A2}}\,  \theta^N
=  ({\tau_{A2}-\tau_{A4}})\, \eta^N .
\end{split}\end{align}
\end{lemma}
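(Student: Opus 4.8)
The plan is to reduce the whole statement to the splitting of the generating vector field $\psi_{A}^{\mM}$ into its $TM$- and $TN$-parts, together with the purely algebraic relation \eqref{aa4.26c}.

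First I would record that, as $\kg$-valued functions on $M\times N$, one has $\psi_{A} = \tau_{A2}\,\mu + \tau_{A4}\,\eta$ by \eqref{aa4.48}. Combining this with \eqref{1.2z} and the identity $V_{i}^{M\times N} = V_{i}^M + V_{i}^N$ from \eqref{2.11}, the induced vector field splits as $\psi_{A}^{\mM} = \psi_{A}^M + \psi_{A}^N$, where $\psi_{A}^M = \tau_{A2}\,\mu^M + \tau_{A4}\,\eta^M$ is a section of $TM$ over $\mM$ and $\psi_{A}^N = \tau_{A2}\,\mu^N + \tau_{A4}\,\eta^N$ is a section of $TN$ over $\mM$. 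Since the sum of a section of $TM$ and a section of $TN$ vanishes only when both summands vanish — this is exactly \eqref{aa4.47} — at any $z\in\mM$ with $\psi_{A}^{\mM}(z)=0$ one has both $\psi_{A}^M(z)=0$ and $\psi_{A}^N(z)=0$; these two vanishings are precisely the first identity of \eqref{aa4.49} and the first identity of \eqref{aa4.51}.

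The remaining four identities are then immediate algebraic consequences, evaluated at such a point $z$. For $\gamma_{A}^M$ I would use $\gamma_{A} = 2\tau_{A1}\,\mu + 2\alpha_{A}(|\mu|^2)\,\eta$ from \eqref{aa4.27} to write $\tau_{A2}\,\gamma_{A}^M = 2\tau_{A1}(\tau_{A2}\,\mu^M) + 2\alpha_{A}(|\mu|^2)\,\tau_{A2}\,\eta^M$; substituting $\tau_{A2}\,\mu^M = -\tau_{A4}\,\eta^M$ and invoking \eqref{aa4.26c} gives $\tau_{A2}\,\gamma_{A}^M = -2\bigl(\tau_{A1}\tau_{A4} - \alpha_{A}(|\mu|^2)\tau_{A2}\bigr)\,\eta^M = -2\tau_{A5}\,\eta^M$. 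The same computation verbatim on the $N$-side, using $\tau_{A2}\,\mu^N = -\tau_{A4}\,\eta^N$, yields $\tau_{A2}\,\gamma_{A}^N = -2\tau_{A5}\,\eta^N$. Finally, from $\theta = \mu + \eta$ (equation \eqref{2.3}) one has $\theta^N = \mu^N + \eta^N$, hence $\tau_{A2}\,\theta^N = \tau_{A2}\,\mu^N + \tau_{A2}\,\eta^N = (\tau_{A2}-\tau_{A4})\,\eta^N$, which is the last identity of \eqref{aa4.51}.

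There is no genuine analytic obstacle here: the content of the lemma is organizational. The one computation to carry out with care is \eqref{aa4.26c} itself, where the point is that the $\phi_{A}'(\beta_{A})\langle\rho_{A},\eta\rangle$ cross-terms appearing in $\tau_{A1}\tau_{A4}$ and in $\alpha_{A}(|\mu|^2)\tau_{A2}$ cancel, leaving $\tau_{A1}(1-\phi_{A}(\beta_{A})) - \alpha_{A}(|\mu|^2) = \tau_{A5}$. The reason the lemma is worth isolating is that it re-expresses the vector fields $\mu^M,\mu^N,\theta^N,\gamma_{A}^M,\gamma_{A}^N$ — none of which is controlled as $A\to\infty$ on the noncompact manifold $\mM$ — solely through $\eta^M$ and $\eta^N$ (up to the factor $\tau_{A2}$, which is bounded away from $0$ for large $A$ by \eqref{aa4.63}), and $\eta$ is bounded since $N$ is compact, cf. \eqref{33.0}; this is what will make the pointwise estimate \eqref{33.14} tractable in the subsequent subsections.
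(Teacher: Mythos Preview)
Your proof is correct and follows essentially the same approach as the paper: both reduce the first identities in \eqref{aa4.49} and \eqref{aa4.51} to the $TM$/$TN$-splitting of $\psi_{A}^{\mM}=\tau_{A2}\,\mu^{\mM}+\tau_{A4}\,\eta^{\mM}$ via \eqref{aa4.47}--\eqref{aa4.48}, then obtain the $\gamma_{A}^{M}$, $\gamma_{A}^{N}$ identities by substituting into \eqref{aa4.27} and invoking \eqref{aa4.26c}, and finally derive the $\theta^{N}$ identity from $\theta=\mu+\eta$. Your added commentary on why the lemma matters (replacing unbounded vector fields by multiples of $\eta^{M},\eta^{N}$) is accurate and helpful, but the underlying argument is the same as the paper's.
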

\begin{proof} Let $z\in \mM$ be such that $\psi_{A}^\mM(z)=0$.
In view of  \eqref{aa4.48} 
the equation $\psi_{A}^M(z)=0$ in
(\ref{aa4.47}) is equivalent to the first equation of
\eqref{aa4.49}. Similarly,  the equation $\psi_{A}^N(z)=0$ in
(\ref{aa4.47}) is equivalent to the first equation of
\eqref{aa4.51}.

By \eqref{aa4.26},  \eqref{aa4.26c}, \eqref{aa4.27} and the first equation of
\eqref{aa4.49}, we get  at $z$:
\begin{multline}\label{aa4.50}
    {\tau_{A2}}\, \gamma_{A}^{M} = 2\tau_{A1}{\tau_{A2}} \, \mu^{M}
+ 2\alpha_{A}(|\mu|^2){\tau_{A2}}\, \eta^M\\
=- 2\tau_{A1}{\tau_{A4}} \, \eta^{M}
+ 2\alpha_{A}(|\mu|^2){\tau_{A2}}\, \eta^M
= -  {2\tau_{A5}} \, \eta^M.
\end{multline}
The second equation in \eqref{aa4.51} follows similarly.
By \eqref{2.11} and the first equation in \eqref{aa4.51}, 
we get the third equation in \eqref{aa4.51}.
 The proof of Lemma \ref{ta5.6} is completed.
\end{proof}

For any $x\in M,\, y\in N$, $W\in T_{x}M$, $V\in T_{y}N$,
let $B(W)\in \End(\Lambda
(T^{*(0,1)}(M\times N)))_{(x,y)}$ be defined by
\begin{align}\label{aa4.85}
B(W) = \sqrt{-1}\,  c\left( J^M W\right)c(W) + |W|^2.
\end{align}
 Clearly,
the endomorphisms $B(W)$, $\sqrt{-1} c(  W)c(V)$  of $\Lambda
(T^{*(0,1)}(M\times N))_{(x,y)}$ are self-adjoint and $B(J^M W) =B(W)=B(-W)$.


\begin{lemma} \label{ta5.7}  On $\{z\in \mM: \psi_{A}^\mM(z)=0\}$,
    the following identities hold for $I_{A\cdot}$ in \eqref{a.3}:
\begin{align}\label{33.55}\begin{split}
\sqrt{-1} \Big(I_{A1}+ &I_{A2}\Big)= \frac{\tau_{A2}}{2}
\sum_{j=1}^{\dim G} B\left(V_j^M\right)+\tau_{A6}\,B\left(\eta^M\right)\\
& +\sqrt{-1}\tau_{A2}\, c\left(J^MV_j^M\right)c\left(V_j^N\right)
+2\sqrt{-1}\tau_{A6}\, c\left(J^M \eta^M\right)c\left(\eta^N\right)\\
& +\sqrt{-1}\tau_{A4}\, c\left(J^NV_j^N\right)c\left(V_j^M\right)
+2\sqrt{-1}\tau_{A7}\, c\left(J^N \eta^N\right)c\left(\eta^M\right),\\
I_{A3}=& \tau_{A4}\, c\left(J^NV_j^N\right)c\left(V_j^N\right)
+2 \tau_{A7}\, c\left(J^N \eta^N\right)c\left(\eta^N\right) .
\end{split}  \end{align}
\end{lemma}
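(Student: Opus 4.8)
The statement is a purely pointwise identity at a zero $z$ of $\psi_{A}^{\mM}$, so the plan is to start from the definitions \eqref{a.3} of $I_{A1},I_{A2},I_{A3}$, express the differentials $(d^M\psi_{Aj})^*$ and $(d^N\psi_{Aj})^*$ in terms of the vector fields $V_j^M,V_j^N,\eta^M,\eta^N$ and the scalar $\tau$-functions of \eqref{aa4.26}, and then repeatedly invoke the relations of Lemma \ref{ta5.6} to eliminate the ill-behaved fields $\mu^M,\mu^N,\theta^N,\gamma_A^{M},\gamma_A^{N}$ in favour of $V_j^{\mM}$-type fields and $\eta^M,\eta^N$. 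First I would differentiate $\psi_A=\tau_{A2}\,\mu+\tau_{A4}\,\eta$ (the form \eqref{aa4.48}) along $M$ and along $N$ separately; using \eqref{2.11} to convert $(d\mu_j)^*=J^MV_j^M$, $(d\eta_j)^*=J^NV_j^N$ and the chain rule on $\tau_{A2},\tau_{A4}$ (whose differentials are combinations of $d^M|\mu|^2,d^M|\theta|^2,d^N|\theta|^2$, hence of $\langle\mu,J^M\cdot\rangle$ and $\langle\eta,J^N\cdot\rangle$), one obtains expressions of the shape
\begin{align*}
(d^M\psi_{Aj})^* &= \tau_{A2}J^MV_j^M + (\text{scalar})_j\,J^M\mu^M + (\text{scalar})_j\,J^M\gamma_A^{M},\\
(d^N\psi_{Aj})^* &= \tau_{A4}J^NV_j^N + (\text{scalar})_j\,J^N\eta^N + (\text{scalar})_j\,J^N\theta^N + \cdots,
\end{align*}
where the scalar coefficients, once summed against $V_j^{M}$ or $V_j^{N}$ via $\sum_j\langle\rho_A,\cdot\rangle$-type contractions, are exactly what produces $\tau_{A6},\tau_{A7}$ as defined in \eqref{aa4.28}. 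This algebraic bookkeeping — matching the combinations of $\phi_A',\phi_A'',\alpha_A',\alpha_A''$, $\langle\rho_A,\eta\rangle$, $|\eta|^2$ and $|\theta|^2-|\mu|^2$ that appear against the coefficients of $B(\eta^M)$, $c(J^M\eta^M)c(\eta^N)$, etc. — is the heart of the computation and is precisely the step I expect to be the main obstacle: it is not deep, but it is delicate, and the whole point of introducing $\tau_{A6},\tau_{A7}$ in \eqref{aa4.28} is to package these coefficients.

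Second, I would assemble $I_{A1}$. By \eqref{a.3}, $I_{A1}=\tfrac12 c((d^M\psi_{Aj})^*)c(V_j^M+2V_j^N)+c((d^N\psi_{Aj})^*)c(V_j^M)$. Substituting the expansions above and using the Clifford identity $\sqrt{-1}\,c(J^MW)c(W)+|W|^2=B(W)$ (definition \eqref{aa4.85}) to collect the diagonal $M$-terms, together with the relations of Lemma \ref{ta5.6} to replace $\mu^M\to -(\tau_{A4}/\tau_{A2})\eta^M$ and $\gamma_A^{M}\to -(2\tau_{A5}/\tau_{A2})\eta^M$ inside the Clifford products, produces the $\tfrac{\tau_{A2}}{2}\sum_j B(V_j^M)+\tau_{A6}B(\eta^M)$ terms and the mixed $c(J^MV_j^M)c(V_j^N)$, $c(J^M\eta^M)c(\eta^N)$, $c(J^NV_j^N)c(V_j^M)$, $c(J^N\eta^N)c(\eta^M)$ terms with the stated coefficients $\tau_{A2},\tau_{A6},\tau_{A4},\tau_{A7}$. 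The term $I_{A2}=\tr[(d^M\psi_{Aj})|_{T^{(1,0)}M}\otimes V_j^M]$ supplies exactly the real (trace) half of each $B(\cdot)$: indeed $\sqrt{-1}\,c(J^MW)c(W)$ and $|W|^2$ combine into $B(W)$ only after adding the trace contribution $\langle(1+J^M/\sqrt{-1})V_j^M,(d^M\psi_{Aj})^*\rangle/2$, which is why $I_{A1}$ and $I_{A2}$ must be treated together in the first identity of \eqref{33.55}.

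Third, for $I_{A3}=c((d^N\psi_{Aj})^*)c(V_j^N)$ I would substitute the $N$-differential expansion, use Lemma \ref{ta5.6} once more to rewrite $\mu^N,\theta^N,\gamma_A^{N}$ as multiples of $\eta^N$ (so that only $V_j^N$ and $\eta^N$ survive), and collect: the $\tau_{A4}$ from the leading term $\tau_{A4}J^NV_j^N$ gives $\tau_{A4}c(J^NV_j^N)c(V_j^N)$, while the $\eta^N$-contributions, after contracting the scalar coefficients with $\sum_j$, coalesce into the coefficient $2\tau_{A7}$ of $c(J^N\eta^N)c(\eta^N)$ — this is where the \emph{definition} of $\tau_{A7}$ in \eqref{aa4.28} is forced on us. Throughout, the only facts used beyond routine Clifford algebra are \eqref{2.11}, \eqref{aa4.48}, the definitions \eqref{aa4.26} and \eqref{aa4.28}, and Lemma \ref{ta5.6}; no analysis or estimate enters, and the asymptotic Lemmas \ref{ta5.10}, \ref{ta5.11} are not needed here (they will be used afterwards, to feed \eqref{33.55} into the inequality \eqref{33.14}). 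I would close by remarking that the identities hold verbatim on all of $\{\psi_A^{\mM}=0\}$ because every substitution invoked is valid precisely there.
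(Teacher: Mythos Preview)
Your proposal is correct and follows essentially the same approach as the paper's proof: both differentiate $\psi_A=\tau_{A2}\mu+\tau_{A4}\eta$ via the product and chain rules, compute $(d^M\tau_{A2})^*,(d^M\tau_{A4})^*,(d^N\tau_{A2})^*,(d^N\tau_{A4})^*$ explicitly (the paper does this as equations \eqref{6.3}, \eqref{6.4}, \eqref{6.9}, \eqref{6.10}), invoke Lemma~\ref{ta5.6} to trade $\mu^M,\gamma_A^M,\mu^N,\theta^N,\gamma_A^N$ for $\eta^M,\eta^N$, and then recognise the resulting scalar coefficients as $\tau_{A6},\tau_{A7}$ from \eqref{aa4.28}. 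Your remark that $I_{A2}$ supplies exactly the $|{\cdot}|^2$ half of each $B(\cdot)$, and that no asymptotic input from Lemmas~\ref{ta5.10} or \ref{ta5.11} is needed here, matches the paper precisely.
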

\begin{proof} 
Let $z\in \mM$ be such that $\psi_{A}^\mM(z)=0$.
By \eqref{2.11} and \eqref{aa4.26}, we get
    \begin{align}\label{6.1}  \begin{split}
(d^{M}\tau_{A1})^{*}&= 2\alpha_{A}''(|\mu|^2) (|\theta|^2 -|\mu|^2)  
J^{M} \mu^{M} 
+2\alpha_{A}'(|\mu|^2)  J^{M}\eta^{M},\\
(d^{N}\tau_{A1})^{*}&= 2\alpha_{A}'(|\mu|^2)  J^{N} \theta^{N}.
\end{split}\end{align}

Using \eqref{2.11}, (\ref{aa5.11}) and  \eqref{aa4.49},
we infer at $z$,
\begin{align}\label{33.33} \begin{split}
(d^M \beta_{A})^* &= J^M \gamma_{A}^M
=-\frac{2\tau_{A5}}{\tau_{A2}}J^M \eta^M ,\\
(d^M \langle\rho_{A},\eta\rangle)^* 
&=J^M \eta^M +2\phi_{A}'(\beta_{A})
|\eta|^{2}\frac{\tau_{A5}}{\tau_{A2}}J^M \eta^M .
\end{split}\end{align}
By \eqref{2.11}, \eqref{aa4.26}, (\ref{aa4.49}), 
(\ref{6.1}) and (\ref{33.33}), at $z$, we get
\begin{multline}\label{6.3}
(d^{M}\tau_{A2})^{*} =-2 \phi_{A}'(\beta_{A}) \langle\rho_{A},\eta\rangle
(d^M \tau_{A1})^*\\
- 2 \phi_{A}''(\beta_{A}) \langle\rho_{A},\eta\rangle 
\tau_{A1}(d^M \beta_{A})^*
- 2 \phi_{A}'(\beta_{A}) \tau_{A1}
(d^M \langle\rho_{A},\eta\rangle)^*  \\
= \left\{ 4 \phi_{A}'(\beta_{A}) \langle\rho_{A},\eta\rangle
 \left[\alpha_{A}''(|\mu|^2) \Big(|\theta|^2 -|\mu|^2\Big) 
\frac{\tau_{A4}}{\tau_{A2}}  - \alpha_{A}'(|\mu|^2)  \right]\right.\\
\left.-4 \Big[- \phi_{A}''(\beta_{A})\langle\rho_{A},\eta\rangle 
+ (\phi_{A}'(\beta_{A}))^2 |\eta|^2 \Big]  
\frac{\tau_{A5}}{\tau_{A2}} \tau_{A1}
- 2 \phi_{A}'(\beta_{A}) \tau_{A1} \right\}J^{M}\eta^{M},
\end{multline}
and
\begin{multline}\label{6.4}
(d^{M}\tau_{A4})^{*} =-2 \phi_{A}'(\beta_{A}) \langle\rho_{A},\eta\rangle
\alpha_{A}'(|\mu|^2) 2 J^{M}\mu^{M}\\
+ \Big[- \phi_{A}'(\beta_{A})  
- 2 \phi_{A}''(\beta_{A}) \langle\rho_{A},\eta\rangle 
\alpha_{A}(|\mu|^2)\Big](d^M \beta_{A})^*\\
- 2 \phi_{A}'(\beta_{A}) \alpha_{A}(|\mu|^2)
(d^M \langle\rho_{A},\eta\rangle)^* \\
= \left\{  4 \phi_{A}'(\beta_{A})\alpha_{A}'(|\mu|^2)  
 \langle\rho_{A},\eta\rangle \frac{\tau_{A4}}{\tau_{A2}}\right.
 +  2 \phi_{A}'(\beta_{A})\frac{\tau_{A5}}{\tau_{A2}}\\
 -4 \Big[- \phi_{A}''(\beta_{A})\langle\rho_{A},\eta\rangle 
+ (\phi_{A}'(\beta_{A}))^2
|\eta|^2 \Big] \alpha_{A}(|\mu|^2)  
\frac{\tau_{A5}}{\tau_{A2}}\\
- 2 \phi_{A}'(\beta_{A}) \alpha_{A}(|\mu|^2)
\Big\}J^{M}\eta^{M}.
\end{multline}

From \eqref{2.11} and (\ref{aa4.48}), we get
\begin{align}\label{6.5} \begin{split}
 &   \left(d^M \psi_{Aj}\right)^* = 
   (d^{M}\tau_{A2})^{*}\, \mu_{j}+  (d^{M}\tau_{A4})^{*} \, \eta_{j}
    +  \tau_{A2} J^{M} V_{j}^{M},\\
&    \left(d^N \psi_{Aj}\right)^* = 
   (d^{N}\tau_{A2})^{*} \, \mu_{j}+  (d^{N}\tau_{A4})^{*} \, \eta_{j}
    +  \tau_{A4} J^{N} V_{j}^{N}.
 \end{split}\end{align}
From (\ref{aa4.26c}), 
(\ref{aa4.28}), the first equation of (\ref{aa4.49}) 
and (\ref{6.3})-(\ref{6.5}), we get at $z$,
\begin{align}\label{aa4.42} \begin{split}
&c\left((d^M \psi_{Aj})^* \right) c\left(V_j^M\right) 
 =\tau_{A2}\,  c\left(J^M V_{j}^M\right)c\left(V_{j}^M\right)
+  2 \tau_{A6}\, c\left(J^M \eta^M\right)c\left(\eta^M\right),\\
&\left\langle \left(1+ \frac{J^M}{\sqrt{-1}} \right) V_j^M,
(d^M \psi_{Aj})^*  \right\rangle = \frac{1}{\sqrt{-1}}\Big(\tau_{A2}
\sum_{j}\left|V_j^M\right|^2 +2 \tau_{A6}\left|\eta^M\right|^2\Big).
\end{split}\end{align}
Using  (\ref{aa4.28}), the first equation of \eqref{aa4.51} 
and (\ref{6.3})-(\ref{6.5}), we get at $z$,
\begin{align}\label{aa4.40}
    \begin{split}
c((d^M\psi_{Aj})^*) c(V_j^N) =&  \tau_{A2}\, c(J^M V_{j}^M)c(V_{j}^N)
+ 2 \,  \tau_{A6} \, c(J^M \eta^M) c(\eta^N).
\end{split}\end{align}

By  (\ref{aa5.11}), (\ref{aa4.51}) and (\ref{6.1}),
it follows that at $z$,
\begin{subequations}
\begin{align}\label{33.37}
& (d^N\beta_{A})^*=2\alpha_{A}
  \left(|\mu|^2\right) \frac{\tau_{A2}-\tau_{A4}}{\tau_{A2}}J^N \eta^N,\\
  \label{33.38a}
&  (d^{N}\tau_{A1})^{*}
= 2\alpha_{A}'(|\mu|^2) \frac{\tau_{A2}-\tau_{A4}}{\tau_{A2}}J^N 
\eta^N,\\
\label{33.38}& (d^N\rho_{Aj})^*= \Big(1-\phi_{A}(\beta_{A})\Big)J^NV^N_j
 -2\phi_{A}'(\beta_{A})\alpha_{A}\left(|\mu|^2\right)\eta_j
\frac{\tau_{A2}-\tau_{A4}}{\tau_{A2}}J^N \eta^N.
\end{align}
\end{subequations}
From \eqref{2.11}, \eqref{33.5}, (\ref{aa4.51}) and (\ref{33.38}), we have
\begin{multline}\label{6.8}
    (d^N \langle\rho_{A},\eta\rangle)^* 
    =\Big(\theta_{j} -\phi_{A}(\beta_{A})\eta_{j} \Big) 
    (d^{N}\eta_{j})^{*} 
    + \eta_{j}(d^N\rho_{Aj})^*\\
    =\left[1-2\phi_{A}(\beta_{A})
+ \Big(1-2\phi_{A}'(\beta_{A})\alpha_{A}\left(|\mu|^2\right) |\eta|^{2}\Big)  
\frac{\tau_{A2}-\tau_{A4}}{\tau_{A2}} \right] J^N \eta^N.
\end{multline}
By \eqref{aa4.26}, (\ref{aa4.49}), (\ref{6.1})
and (\ref{33.37})-(\ref{6.8}), we get at $z$,
\begin{multline}\label{6.9}
(d^{N}\tau_{A2})^{*} =-2 \phi_{A}'(\beta_{A}) \langle\rho_{A},\eta\rangle
(d^N \tau_{A1})^*\\
- 2 \phi_{A}''(\beta_{A}) \langle\rho_{A},\eta\rangle 
\tau_{A1}(d^N \beta_{A})^*
- 2 \phi_{A}'(\beta_{A}) \tau_{A1}
(d^N \langle\rho_{A},\eta\rangle)^*  \\
= \left\{- 4\phi_{A}'(\beta_{A}) \langle\rho_{A},\eta\rangle
\alpha_{A}'(|\mu|^2) \frac{\tau_{A2}-\tau_{A4}}{\tau_{A2}}\right.\\ 
+4 \Big[- \phi_{A}''(\beta_{A})\langle\rho_{A},\eta\rangle 
+ (\phi_{A}'(\beta_{A}))^2
|\eta|^2 \Big] \alpha_{A}\left(|\mu|^2\right) 
\frac{\tau_{A2}-\tau_{A4}}{\tau_{A2}}  \tau_{A1}\\
\left. - 2 \phi_{A}'(\beta_{A}) \tau_{A1}
\left(1-2\phi_{A}(\beta_{A}) + \frac{\tau_{A2}-\tau_{A4}}{\tau_{A2}} 
\right)
\right\}J^{M}\eta^{N},
\end{multline}
and
\begin{multline}\label{6.10}
(d^{N}\tau_{A4})^{*} =\Big[- \phi_{A}'(\beta_{A})  
- 2 \phi_{A}''(\beta_{A}) \langle\rho_{A},\eta\rangle 
\alpha_{A}(|\mu|^2)\Big](d^N\beta_{A})^*\\
- 2 \phi_{A}'(\beta_{A}) \alpha_{A}(|\mu|^2)
(d^N \langle\rho_{A},\eta\rangle)^* \\
= \left\{  4 \Big[-  \phi_{A}''(\beta_{A}) \langle\rho_{A},\eta\rangle 
+ (\phi_{A}'(\beta_{A}))^{2} |\eta|^{2}
\Big]\alpha_{A}(|\mu|^2)^{2}\, 
\frac{\tau_{A2}-\tau_{A4}}{\tau_{A2}}\right.\\
\left. - 2 \phi_{A}'(\beta_{A})  \alpha_{A}\left(|\mu|^2\right) 
\Big[1-2\phi_{A}(\beta_{A}) + 2 \frac{\tau_{A2}-\tau_{A4}}{\tau_{A2}}
\Big]
\right\}J^{M}\eta^{N}.
\end{multline}       
From (\ref{aa4.26c}), (\ref{aa4.28}), (\ref{aa4.51}), (\ref{6.5}), 
(\ref{6.9}) and (\ref{6.10}), we get at $z$,
\begin{align}\label{aa4.40c}
    \begin{split}
  c((d^N\psi_{Aj})^*) c(V_j^M)
       = & \tau_{A4}\, c(J^N V_{j}^N)c(V_{j}^M)
       + 2\, \tau_{A7} \,  c(J^N \eta^N)c(\eta^M) ,\\
 c((d^N\psi_{Aj})^*) c(V_j^N)
       = &  \tau_{A4}\, c(J^N V_{j}^N)c(V_{j}^N)
       + 2\, \tau_{A7} \,  c(J^N \eta^N)c(\eta^N) .
\end{split}\end{align}   
    
 By (\ref{a.3}), (\ref{aa4.85}), (\ref{aa4.42}), 
(\ref{aa4.40}) and  (\ref{aa4.40c}), we get (\ref{33.55}).
\end{proof}




\begin{lemma}\label{ta5.12} For any $k>0$, the following
inequalities hold for $W\in TM$, $V\in TN$ :
\begin{align}\label{aa4.86}
\begin{split}
&B(W)\geqslant 0, \\
&\sqrt{-1} c(  W)c(V) \geqslant -\frac{1}{2 k} B(W) - k |V|^2 .
\end{split}
\end{align}
\end{lemma}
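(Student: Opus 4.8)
The plan is to reduce both inequalities to pointwise statements in the fibre of $\Lambda(T^{*(0,1)}(M\times N))\otimes(L\otimes F)$ over a point $(x,y)\in M\times N$, where the line factor $L\otimes F$ plays no role, and to use the creation/annihilation splitting of the Clifford action from \eqref{a1.20}. For $W\in T_{x}M$ I would introduce
\[
a(W)=-\tfrac12\bigl(c(W)+\sqrt{-1}\,c(J^{M}W)\bigr),\qquad
a(W)^{*}=\tfrac12\bigl(c(W)-\sqrt{-1}\,c(J^{M}W)\bigr),
\]
the second being indeed the adjoint of the first since $c(W)^{*}=-c(W)$ and $c(J^{M}W)^{*}=-c(J^{M}W)$ for the real vectors $W,J^{M}W$; writing $W=w+\ov w\in T^{(1,0)}M\oplus T^{(0,1)}M$ one checks $a(W)=\sqrt2\,i_{\ov w}$, $a(W)^{*}=\sqrt2\,w^{*}\wedge$. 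Then $c(W)=a(W)^{*}-a(W)$, and since $c(W)$ and $c(J^{M}W)$ anticommute (because $g^{TM}(W,J^{M}W)=0$) with $c(W)^{2}=c(J^{M}W)^{2}=-|W|^{2}$, a direct expansion using the Clifford relations gives
\[
a(W)^{*}a(W)=\tfrac12\bigl(|W|^{2}+\sqrt{-1}\,c(J^{M}W)c(W)\bigr)=\tfrac12\,B(W).
\]
In particular $B(W)=2\,a(W)^{*}a(W)\geqslant 0$, which is the first inequality.

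For the second inequality, fix $k>0$ and form, in the same fibre,
\[
Y=\tfrac{1}{\sqrt{k}}\,a(W)+\sqrt{k}\,\sqrt{-1}\,c(V).
\]
Under the identification $\Lambda(T^{*(0,1)}(M\times N))=\Lambda(T^{*(0,1)}M)\,\widehat{\otimes}\,\Lambda(T^{*(0,1)}N)$ the operator $a(W)$ acts on the first factor and $c(V)$ on the second, both being odd, so they anticommute; hence $a(W)^{*}c(V)+c(V)a(W)=\bigl(a(W)^{*}-a(W)\bigr)c(V)=c(W)c(V)$. Using in addition $(\sqrt{-1}\,c(V))^{*}=\sqrt{-1}\,c(V)$, $(\sqrt{-1}\,c(V))^{2}=-c(V)^{2}=|V|^{2}$ and $a(W)^{*}a(W)=\tfrac12B(W)$, I would expand
\[
Y^{*}Y=\tfrac1k\,a(W)^{*}a(W)+\sqrt{-1}\,c(W)c(V)+k|V|^{2}
=\tfrac{1}{2k}\,B(W)+\sqrt{-1}\,c(W)c(V)+k|V|^{2}.
\]
Since $Y^{*}Y\geqslant 0$, this is precisely $\sqrt{-1}\,c(W)c(V)\geqslant-\tfrac{1}{2k}B(W)-k|V|^{2}$.

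Everything here is elementary Clifford algebra, so there is no genuine obstacle; the only points deserving care are normalization and sign bookkeeping: that with the convention \eqref{a1.20} one has $c(W)^{*}=-c(W)$ and $(w^{*}\wedge)^{*}=i_{\ov w}$ for the Hermitian metrics fixed in Section \ref{s1.6}; that the graded tensor product identification really makes $c(W)$, $W\in TM$, anticommute with $c(V)$, $V\in TN$; and the computation leading to $a(W)^{*}a(W)=\tfrac12B(W)$. The essential content of the lemma is then the single observation that the left-hand side of the second inequality, augmented by $\tfrac{1}{2k}B(W)+k|V|^{2}$, equals the manifestly nonnegative operator $Y^{*}Y$, the first inequality being just $B(W)=2\,a(W)^{*}a(W)\geqslant 0$.
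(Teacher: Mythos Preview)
Your proof is correct. The paper's argument is different in presentation: it first reduces to a two–dimensional model by taking $\{v,w\}$ an orthonormal basis of $\C^{2}$, computes $B(W)=4\,w^{*}\wedge i_{\ov w}$ directly from \eqref{a1.20}, and then writes out $\sqrt{-1}c(W)c(V)\sigma$ in the coordinates $\sigma=\sigma_{1}w^{*}\wedge v^{*}+\sigma_{2}w^{*}+\sigma_{3}v^{*}+\sigma_{4}$, obtaining the second inequality from the scalar Cauchy--Schwarz bound $4\,\mathrm{Im}(\sigma_{1}\ov{\sigma_{4}}-\sigma_{2}\ov{\sigma_{3}})\geqslant -\tfrac{2}{k}(|\sigma_{1}|^{2}+|\sigma_{2}|^{2})-2k|\sigma|^{2}$. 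Your route packages the same idea more cleanly and coordinate-free: the factorization $B(W)=2\,a(W)^{*}a(W)$ gives the first inequality at once, and the ``completion of the square'' $Y^{*}Y\geqslant 0$ with $Y=\tfrac{1}{\sqrt{k}}a(W)+\sqrt{k}\,\sqrt{-1}\,c(V)$ is exactly the operator version of the paper's Cauchy--Schwarz step, with the anticommutation of $a(W)$ and $c(V)$ (both odd, acting on distinct graded tensor factors) replacing the explicit 2D reduction. The content is the same; your formulation avoids the auxiliary dimension reduction and the component bookkeeping.
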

 \begin{proof} It is enough to prove it for 
 $V=v+\ov{v}$, $W=w+\ov{w}$, and $\{v,w\}$ an orthonormal basis
 of $\C^2$ with the standard Hermitian product. 
Using \eqref{a1.20} and \eqref{aa4.85}, we find
\begin{align}\label{aa4.87}
\begin{split}
B(W)&= -2 \left({w}^*\wedge + i_{\ov{w}}\right) \left({w}^*\wedge
- i_{\ov{w}}\right) + 2 = 4 {w}^*\wedge i_{\ov{w}}.
  \end{split}
\end{align}
Thus   the first inequality in \eqref{aa4.86} holds (cf.
\cite[(2.9), (2.13)]{TZ98}).

 For any $\sigma\in \Lambda \ov{\C^2}^*$, we write
$\sigma= \sigma_1\, {w}^*\wedge {v}^* + \sigma_2\, {w}^*+\sigma_3\, {v}^* 
+  \sigma_4,$
where $\sigma_{i}\in \C$ for $i=1,2,3,4$.
By \eqref{a1.20}, we get 
\begin{align}\label{aa4.90}
    \begin{split}
   \sqrt{-1} c(W) c(V)\sigma =2 \sqrt{-1} \left(  - \sigma_1
+ \sigma_2  {v}^* 
   -\sigma_3 {w}^*
+ \sigma_4 {w}^*\wedge {v}^*\right) .
\end{split}\end{align}
From \eqref{aa4.87} and  \eqref{aa4.90},  
we find that for any $k>0$,
\begin{align}\label{aa4.92}\begin{split}
\langle &\sqrt{-1} c(W)c(V)\sigma, \sigma \rangle = 4 \,\text{Im} 
\left(\sigma_{1} \ov{\sigma_{4}}- \sigma_{2} \ov{\sigma_{3}}\right) \\
&\geqslant  -\frac{2}{k}\left(|\sigma_1 |^2 + |\sigma_2 |^2\right)
- 2k |\sigma |^2 
= -\frac{1}{2k} \langle B(W) \sigma, \sigma\rangle - 2k |\sigma |^2  .
 \end{split}\end{align}

From 
\eqref{aa4.92}, we get  the second inequality
of \eqref{aa4.86}.
\end{proof}

\comment{ \begin{proof} 
We write 
  $W$, $V$ as
$W=w+\ov{w}$, $V=v+\ov{v}$, with $w\in T^{(1,0)}M$, $\ov{w}\in
T^{(0,1)}M$,  $v\in T^{(1,0)}N$, $\ov{v}\in T^{(0,1)}N$. 
Using \eqref{a1.20}, we find
\begin{align}\label{aa4.87}
\begin{split}
B(W)
&= -2 \left({w}^*\wedge + i_{\ov{w}}\right) \left({w}^*\wedge
- i_{\ov{w}}\right) + 2|w|^2 = 4 {w}^*\wedge i_{\ov{w}}.
  \end{split}
\end{align}
Thus   the first inequality in \eqref{aa4.86} holds (cf.
\cite[(2.9), (2.13)]{TZ98}).

 For any $\sigma\in \Lambda (T^{*(0,1)}(M\times N))_{(x,y)}$, we write
\begin{align}\label{aa4.89}
\sigma=   {w}^*\wedge {v}^*\wedge\sigma_1 +  {w}^* \wedge
\sigma_2+ {v}^* \wedge \sigma_3 +  \sigma_4,
\end{align}
where each  $\sigma_{i}$ $(i=1,2,3,4)$ does not contain the terms
${w}^*$, ${v}^* $.
By \eqref{a1.20} and \eqref{aa4.89}, we get  
\begin{align}\label{aa4.90}
    \begin{split}
   \sqrt{-1} c(W) c(V)\sigma = & -2 \sqrt{-1}|w|^2|v|^2 \sigma_1
+ 2\sqrt{-1} |w|^2 {v}^* \wedge\sigma_2\\
 &  - 2 \sqrt{-1}|v|^2  {w}^*\wedge\sigma_3
+2 \sqrt{-1}  {w}^*\wedge {v}^* \wedge \sigma_4.
\end{split}\end{align}
From \eqref{aa4.87} and  \eqref{aa4.90},  
we find that for any $k>0$,
\begin{align}\label{aa4.92}\begin{split}
\langle &\sqrt{-1} c(W)c(V)\sigma, \sigma \rangle = 4  |w|^2
|v|^2\,   \text{Im} \langle \sigma_{1}, \sigma_{4} \rangle
 - 4 |w|^2 |v|^2 \,  \text{Im} \langle \sigma_{2},
\sigma_{3}  \rangle\\
&\geqslant  -\frac{2}{k}|w|^4 
\left( |v|^2 |\sigma_1 |^2  +  |\sigma_2 |^2\right)
- 2k  |v|^2 |\sigma |^2 
= -\frac{1}{2k}  \langle B(W) \sigma, \sigma \rangle  - 2k  |v|^2
|\sigma |^2  .
    \end{split}\end{align}

From 
\eqref{aa4.92}, we get  the second inequality
of \eqref{aa4.86}.
\end{proof}
}

\subsection{Proof of Lemma \ref{t33.6} (III): final step}\label{sa.6}
 Recall that $z\in\mM$ satisfies $\psi_{A}^\mM(z)=0$.
By Lemma \ref{t33.4}, $z\in \mM\setminus \partial \mM$.

By Lemma \ref{ta5.10}, $\tau_{A2}, \tau_{A4}>0$ 
on $\mM$ for $A$ large enough. Thus 
by (\ref{aa4.73}), (\ref{33.55}) and 
the second equation in \eqref{aa4.86} with $k=8$, we get
\begin{align}\label{aa4.95}\begin{split}
    \sqrt{-1}\left(I_{A1}+ I_{A2}\right) \geqslant &
    \frac{1}{2} \left(\frac{7}{8}\tau_{A2}   
    -  \frac{1}{8}   \tau_{A4}\right) \sum_{j=1}^{\dim G} B\left(V_{j}^M\right)
  -  \Big(8 \tau_{A2} + 8 \tau_{A4}\Big)
\sum_{j=1}^{\dim G} \left|V_{j}^N\right|^2\\
&   + \left(\frac{7}{8}\tau_{A6} 
-  \frac{1}{8}   \tau_{A7}  \right) B\left(\eta^M\right)
   - \Big(16 \tau_{A6}+ 16 \tau_{A7}\Big)  \left|\eta^N\right|^2.
\end{split}\end{align}
By Lemma \ref{ta5.10},  we obtain for $A>0$ large enough,
\begin{align}\label{aa4.96}\begin{split}
  & \frac{7}{8} \tau_{A2}   
  -  \frac{1}{8} \tau_{A4}
=  \frac{3}{4} + \frac{1}{8} \phi_{A}(\beta_{A}) +
\mO_0\left(A^{-1/2}\right)\geqslant \frac{1}{ 2}.
\end{split}\end{align}
By Lemma \ref{ta5.11}, for $A>0$ large enough, 
as  $z\in \mM\setminus \partial \mM$,
\begin{align}\label{aa4.100}
    \frac{7}{8}  \tau_{A6} 
    -  \frac{1}{8} \tau_{A7} 
     \geqslant \frac{1}{8} \tau_{A6} \Big(1+ \mO_0(A^{-1/2})\Big)>0.
\end{align}

Recall that $V_{j}^N$, $\eta$ are defined on the compact manifold $N$.
By Lemmas \ref{ta5.10}, 
 (\ref{aa4.72}), (\ref{aa4.86}) and (\ref{aa4.95})-(\ref{aa4.100}),  
there exists $C'>0$ such that for $A>0$ large enough, 
the following inequality holds:
\begin{align}\label{33.56}
 \sqrt{-1}\left(I_{A1}+ I_{A2}\right)\geqslant -C'\, \Id
\quad \text{ on } \{z\in \mM, \psi_{A}^M(z)=0\} .
 \end{align}
 By (\ref{aa4.64}), (\ref{aa4.72}) and (\ref{33.55}),  
 there exists $C''>0$ such that for $A>0$ large enough, we have
\begin{align}\label{33.59}
 \left|I_{A3}\right|\leqslant C'' \quad \text{ on }
 \{z\in \mM, \psi_{A}^M(z)=0\} .
 \end{align}
By Lemma \ref{ta5.10}, 
\eqref{33.0}, \eqref{aa4.48} and
(\ref{a.103}), for $A>0$ large enough, 
we get over $\mM$:
\begin{align}\label{aa4.82}
\begin{split}
 2  \left\langle  \psi_{A}, \theta\right\rangle &
= 2\tau_{A2}|\mu| ^2 + 2\tau_{A4} |\eta|^2
+ 2\left(\tau_{A2}+\tau_{A4}\right) \langle \mu,\eta\rangle\\
&\geqslant 2A + \mO_0\left(A^{1/2}\right) \geqslant A,\\
|\psi_{A}|&=\mO_0\left(A^{1/2}\right).
\end{split}\end{align}

By (\ref{33.56})--(\ref{aa4.82}),
  we get (\ref{33.14}).  This completes  the  proof of Lemma \ref{t33.6}.

\section{Functoriality of  quantization}\label{s4}
This section is organized as follows. In Section \ref{s4.1},  
we establish the product formula for  quantization, Theorem \ref{t0.4}.
 In Section \ref{s4.2}, we explain the compatibility 
of quantization and its restriction to a subgroup. 

We will use the assumptions and notation  
in the Introduction and in Section \ref{s33.1}.

\subsection{Proof of Theorem \ref{t0.4}}\label{s4.1}
Let  $c>0$ be a regular value of  $|\theta|^2$.
By \cite[Theorem 4.3]{TZ99}, \cite[Prop. 7.10]{Par01}
(cf. also Theorems \ref{t1.3}, \ref{t1.5}), the
following identity holds:
\begin{align}\label{2.8}
    \Ind\left(\sigma_{L\otimes F,\theta}^{(M\times N)_c}\right)_{\gamma=0}
    = Q\left(\left(L {\otimes} F\right)_{\gamma=0}\right).
\end{align}
Here $0$ need not to be a regular value of $\theta$.

On the other hand, by Theorem \ref{t0.1}b), we have
\begin{align}\label{44.1}
   \Ind\left(\sigma_{L\otimes F,\theta}^{(M\times N)_c}\right)_{\gamma=0}
 = Q\left(L {\otimes} F\right)_{\gamma=0}.
\end{align}
Therefore, by (\ref{2.8}) and (\ref{44.1}), we get  (\ref{0.13}).
Thus, to prove Theorem \ref{t0.4}, 
we only need to prove the
following identity, which has been  stated in (\ref{0.14}),
\begin{align}\label{44.3}
 Q\left(L {\otimes} F\right)_{\gamma=0}
=\sum_{\gamma\in\Lambda_+^*}Q(L)_\gamma\cdot Q(F)_{\gamma,*}.
\end{align}


We first establish 
the following lemma, which has been
stated in (\ref{0.101}).

\begin{lemma}\label{t44.1} There exists $a'\geqslant 0$ such that for any
    regular value $a> a'$ of $|\mu|^2:M\rightarrow {\R}$,
    the following identity holds:
\begin{align}\label{44.4}
  \sum_{\gamma\in\Lambda_+^*}Q(L)_\gamma\cdot Q(F)_{\gamma,*}
  =\Ind\left(\sigma^{M_a\times N}_{L\otimes F,\mu}\right)_{\gamma=0}.
\end{align}
\end{lemma}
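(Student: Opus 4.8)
\textbf{Proof proposal for Lemma \ref{t44.1}.}

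The plan is to express both sides of \eqref{44.4} as $\gamma=0$ components of transversal indices on $M_a\times N$ and then to compare them using the multiplicativity of the transversal index together with the Peter--Weyl decomposition. First I would recall that, by Theorem \ref{t0.1}a) and the definition of $Q(L)_\gamma$, there exists $a_\gamma\geqslant 0$ such that $Q(L)_\gamma=\Ind(\sigma^{M_a}_{L,\mu})_\gamma$ for every regular value $a>a_\gamma$ of $\mathcal H=|\mu|^2$; by Theorem \ref{t1.5} one may take $a_\gamma=c_\gamma/(4\pi^2)$, which is $0$ when $\gamma=0$. The key point is that the product geometric data on $M_a\times N$ has the fibered structure needed for Atiyah's multiplicativity theorem for transversal indices applied to the pair $(M_a,N)$: here $N$ is compact, $\sigma^N_{F,\eta}$ (which equals $\sigma^N_F$ since $N$ is compact and one may deform $\eta$ to $0$) represents $\Ind(D^F_+)\in R(G)$ by \eqref{b0.5}, and the deformation vector field on $M_a\times N$ used to build $\sigma^{M_a\times N}_{L\otimes F,\mu}$ is $\mu^{M}$ lifted trivially from $M_a$ — note $\mu^{M\times N}$ is \emph{not} used here, only $\mu$, so the obstruction described in the introduction after \eqref{0.14} does not arise.

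Concretely, the multiplicativity statement reads, as an identity in $R[G]$,
\begin{align}\label{prop44.mult}
\Ind\left(\sigma^{M_a\times N}_{L\otimes F,\mu}\right)
=\Ind\left(\sigma^{M_a}_{L,\mu}\right)\cdot \Ind\left(D^F_+\right),
\end{align}
where the product on the right is the product in $R[G]$ of a formal (possibly infinite) virtual representation with a genuine one. Taking $\gamma=0$ components and using that, for a genuine $G$-representation $W$ and $Z\in R[G]$, the multiplicity of the trivial representation in $Z\cdot W$ equals $\sum_{\gamma\in\Lambda_+^*}Z_\gamma\cdot(\dim\Hom_G(V^G_\gamma{}^*,W))$, one gets
\begin{align}\label{prop44.zero}
\Ind\left(\sigma^{M_a\times N}_{L\otimes F,\mu}\right)_{\gamma=0}
=\sum_{\gamma\in\Lambda_+^*}\Ind\left(\sigma^{M_a}_{L,\mu}\right)_\gamma
\cdot Q(F)_{\gamma,*},
\end{align}
since $(V^G_\gamma)^*\cong V^G_{\gamma^*}$ and $Q(F)_{\gamma,*}$ is by definition the multiplicity of $(V^G_\gamma)^*$ in $\Ind(D^F_+)$. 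Now I would set $a'=\sup\{a_\gamma:\ Q(F)_{\gamma,*}\neq 0\}$; this is finite because, by \eqref{b0.5}, $\Ind(D^F_+)\in R(G)$ is a genuine virtual representation, so only finitely many $\gamma$ occur, and for those finitely many $\gamma$ the bounds $a_\gamma$ are finite. For $a>a'$ each term on the right of \eqref{prop44.zero} with $Q(F)_{\gamma,*}\neq 0$ satisfies $\Ind(\sigma^{M_a}_{L,\mu})_\gamma=Q(L)_\gamma$ by Theorem \ref{t0.1}a), and the right-hand side becomes the finite sum $\sum_{\gamma}Q(L)_\gamma\cdot Q(F)_{\gamma,*}$, which is \eqref{44.4}.

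The main obstacle is justifying \eqref{prop44.mult} rigorously: the multiplicativity of the transversal index in Atiyah \cite{A74} is stated for manifolds without boundary, whereas $M_a$ has boundary $\partial M_a$. To handle this I would either invoke the boundary version of the additivity/multiplicativity of the transversal index for manifolds with boundary as in Paradan \cite{Par01} (cf.\ the additivity used in \eqref{1.24} and in \eqref{0.17}), or — more in the spirit of Section \ref{s1} of this paper — convert both sides to APS-type indices via Theorem \ref{t1.3} and use that the APS index on the product $M_a\times N$ with $N$ closed factorizes (the Dirac operator on $M_a\times N$ is $D^L_{M_a}\hat\otimes 1 + 1\hat\otimes D^F_N$, the deformation acts only in the $M_a$ factor, and the APS boundary operator on $\partial M_a\times N$ is $D^L_{\partial M_a,T}\hat\otimes 1 + 1\hat\otimes D^F_N$, so a separation-of-variables/Künneth argument as in \cite[Ch.~3]{BL91} yields the product formula component-wise). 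A secondary point to check is the finiteness of $a'$ and the interchange of the (a priori infinite) sum over $\gamma$ with taking the $\gamma=0$ component; this is legitimate because $\Ind(D^F_+)$ is a \emph{finite} virtual representation, so \eqref{prop44.zero} is a finite sum from the outset. This argument, combined with \eqref{44.3} (proved from \eqref{0.13} and Lemma \ref{t44.1} together with the vanishing Theorem \ref{t33.2} as explained around \eqref{0.101}--\eqref{0.103}), completes the proof of Theorem \ref{t0.4}.
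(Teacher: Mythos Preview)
Your overall strategy---multiplicativity of the transversal index plus the finiteness of $\Lambda_+^*(F):=\{\gamma:\ Q(F)_{\gamma,*}\neq 0\}$ to pick a single $a'$---is exactly the paper's. But there is a genuine gap at the crucial step: your claim that ``the deformation vector field on $M_a\times N$ used to build $\sigma^{M_a\times N}_{L\otimes F,\mu}$ is $\mu^M$ lifted trivially from $M_a$'' is incorrect. By definition (cf.\ \eqref{1.2z} and \eqref{1.2}), the symbol $\sigma^{M_a\times N}_{L\otimes F,\mu}$ is built from the vector field $\mu^{M_a\times N}$ on $M_a\times N$, and since $G$ acts diagonally, $\mu^{M_a\times N}(x,y)=(\mu(x))^{M\times N}(x,y)=\mu^M(x)+(\mu(x))^N(y)$. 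Thus $\sigma^{M_a\times N}_{L\otimes F,\mu}$ is \emph{not} the external product $\sigma^{M_a}_{L,\mu}\boxtimes\sigma^N_{F,0}$, and your identity \eqref{prop44.mult} cannot be read off directly from Atiyah's multiplicativity theorem; the very $\mu^N$ term you say ``is not used here'' is present.

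The paper repairs precisely this point with a homotopy: one sets $\sigma_t=\sigma^{M_a\times N}_{L\otimes F,\mu}-(1-t)\sqrt{-1}\pi^*c(\mu^N)$, so that $\sigma_1=\sigma^{M_a\times N}_{L\otimes F,\mu}$ while $\sigma_0$ \emph{is} the external product of $\sigma^{M_a}_{L,\mu}$ and $\sigma^N_{F,0}$. The associated vector field is $V_t=\mu^M+t\,\mu^N$, which does not vanish on $\partial(M_a\times N)=(\partial M_a)\times N$ because $\mu^M$ does not vanish on $\partial M_a$; hence $\{\sigma_t\}$ is a homotopy through $G$-transversally elliptic symbols, and homotopy invariance plus multiplicativity give \eqref{44.4}. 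Your alternative route via Theorem \ref{t1.3} and a K\"unneth argument for APS indices is plausible but not free: the boundary operator on $\partial M_a\times N$ is $D^L_{\partial M_a,T}\hat\otimes 1+1\hat\otimes D^F_N$ only after the same $\mu^N$ term has been removed, so you would still need the homotopy (or an equivalent deformation) first. Once you insert this homotopy step, your argument coincides with the paper's.
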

\begin{proof} 
We denote the finite set 
$\{\gamma\in\Lambda_+^*: Q(F)_{\gamma,*}\neq 0\}$ by $\Lambda_+^*(F)$. 
By Theorem \ref{t0.1}, there exists $a_1\geqslant 0$ such that for
any regular value $a> a_1$ of $|\mu|^2$, we have
\begin{align}\label{44.5}
Q(L)_\gamma =\Ind\left(\sigma^{M_a }_{L ,\mu}\right)_\gamma 
\quad \mbox{  for any } \gamma\in \Lambda_+^*(F).
\end{align}

Let $a> a_1$ be a regular value of $|\mu|^2$.
For $0\leqslant t\leqslant 1$, let $\sigma_t$ be the symbol on $M_a\times
N$ defined to be a deformation of $\sigma^{M_a\times N}_{L\otimes
F,\mu}$ as follows,
\begin{align}\label{44.6}
\sigma_t=\sigma^{M_a\times N}_{L\otimes F,\mu}-(1-t)\sqrt{-1}
\pi^*c\left(\mu^N\right)
\end{align}
where $\pi:T(M_a\times N)\rightarrow M_a\times N$ is the canonical
projection (cf. (\ref{1.2})).

By (\ref{1.2}) and (\ref{2.25}),   when
$t=0$, $\sigma_0$ is the external  product of $\sigma^{M_a }_{L
,\mu}$ and $\sigma^N_{F,0}$ in the sense of \cite{A74} (cf.
\cite[(3.11)]{Par01}). Then by the multiplicativity
 of the transversal
index (\cite[Theorem 3.5]{A74}, \cite[(3.12)]{Par01}) and by
(\ref{44.5}), we get
\begin{align}\label{44.7}
 \sum_{\gamma\in\Lambda_+^*}Q(L)_\gamma\cdot
   Q(F)_{\gamma,*}=\Ind\left(\sigma_0\right)_{\gamma=0}.
\end{align}

For $0\leqslant t\leqslant 1$, set
\begin{align}\label{44.8}
 V_t=\mu^{M_a\times N}- (1-t)\mu^N.
\end{align}
Then by (\ref{2.11}), (\ref{2.25}) and (\ref{44.8}), we have
\begin{align}\label{44.9}
 V_t= \mu^M +t\mu^N.
\end{align}
As $a> a_1$ is a regular value of $|\mu|^2$,
 $\mu^M$ does not vanish on $\partial M_a$. 
From \eqref{44.9}, 
$ \mu^{M_a\times N}$, $V_t$ do not vanish on $\partial(M_a\times
N)=(\partial M_a)\times N$ for $0\leqslant t\leqslant 1$.

By (\ref{1.2}), (\ref{44.6}), (\ref{44.8}) and (\ref{44.9}), 
the set
$\{(z,v)\in T_G(M_a\times N): \mbox{there exists } 0\leqslant  t\leqslant 1
\mbox{ such that }  \ \sigma_t(z,v)\ \mbox{is non-invertible}\}
\subset \{(x,y,0)\in  T_G(M_a\times N):  \mu^M(x)=0, x\in M_a, y\in N\}$
 is a compact
subset of $T_G (\widehat{M_a\times N})$. Thus $\sigma_t$ forms a
continuous family of transversally elliptic  symbols in the sense of
\cite{A74} and \cite[\S3]{Par01}. Hence by (\ref{44.6}),
(\ref{44.7}) and the homotopy invariance of the transversal index
(cf. \cite[Theorems 2.6, 3.7]{A74}, \cite[\S 3]{Par01}),
we get (\ref{44.4}).
  The proof of Lemma \ref{t44.1} is completed.
\end{proof}

Let $A>0$ be a regular value of both $|\mu|^2$ and
$\frac{1}{2}|\theta|^2$.
 We may and we will assume that $A>0$ is large enough so that both
 Theorem \ref{t33.2} and Lemma \ref{t44.1} hold.

 Let $Y:\mM\rightarrow \kg$ be a $G$-equivariant map such that
 (\ref{33.3}) holds.
 By the additivity of the transversal index (cf. \cite[Theorem 3.7, \S 6]{A74} 
 and \cite[Prop. 4.1]{Par01}), we  have
\begin{align}\label{44.12}
\Ind\left(\sigma^{(M\times N)_{2A}}_{L\otimes
F,\theta}\right)_{\gamma=0}= \Ind\left(\sigma^{\mM}_{L\otimes
F,Y}\right)_{\gamma=0}+ \Ind\left(\sigma^{M_A\times
N}_{L\otimes F,\mu}\right)_{\gamma=0}.
\end{align}
By Theorems \ref{t1.3} and \ref{t33.2}, we find
\begin{align}\label{44.13}
\Ind\left(\sigma^{\mM}_{L\otimes F,Y}\right)_{\gamma=0}=0.
\end{align}
By Theorem \ref{t0.1}b), (\ref{44.4}), (\ref{44.12}) and 
(\ref{44.13}), we get (\ref{44.3}).
The proof of Theorem \ref{t0.4}
is completed.

\subsection{Restriction commutes with quantization}\label{s4.2}
Set
\begin{align}\label{0.6}
Q_G(L)^{-\infty} =\bigoplus_{\gamma\in \Lambda^*_+ }Q(L)_\gamma\,
\cdot V_\gamma^G \in R[G].
\end{align}
By Theorem \ref{t0.2}, $Q_G(L)^{-\infty}$ is equal to the formal
geometric quantization in the sense of  Weitsman \cite[Definition
4.1]{W01} (where the fundamental properness assumption of the
moment map was introduced into the framework of geometric
quantization) and Paradan \cite[Definition 1.2]{Par07}.

On the other hand, let $H$ be a compact connected subgroup of $G$
such that the moment map of the induced   action of $H$ on $M$ is
also proper. By combining Theorem \ref{t0.2}, (\ref{0.6}) with
\cite[Theorem 1.3]{Par07}, one   gets the following  relation
between $Q_G(L)^{-\infty}$ and $Q_H(L)^{-\infty}$.

\begin{thm}\label{t0.100} Any irreducible representation of $H$ has
a finite multiplicity in $Q_G(L)^{-\infty}$.
Moreover, when both sides are viewed as virtual representation spaces
of $H$, the following identity holds:
\begin{align}\label{0.89}
\left.Q_G(L)^{-\infty}\right|_H = Q_H(L)^{-\infty}.
\end{align}
\end{thm}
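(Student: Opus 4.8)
The plan is to deduce the theorem from the analogous functoriality of the \emph{formal} geometric quantization, established by Paradan \cite[Theorem 1.3]{Par07}, the bridge being Theorem \ref{t0.2}. First, combining Theorem \ref{t0.2} with the definition (\ref{0.6}) gives
\begin{align}\label{e.restr}
Q_G(L)^{-\infty}=\bigoplus_{\gamma\in\Lambda^*_+}Q(L_\gamma)\cdot V^G_\gamma ,
\end{align}
so that $Q_G(L)^{-\infty}$ is exactly the formal geometric quantization of $(M,\omega,J^M,L)$ in the sense of Weitsman \cite[Definition 4.1]{W01} and Paradan \cite[Definition 1.2]{Par07}: it is assembled from the compact symplectic reductions $M_\gamma$ together with the induced data $(L_\gamma,J_\gamma)$, with no reference to transversal indices. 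This identification is the point where the main results of the paper enter; the remaining steps are formal.

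Next I would check that the induced Hamiltonian action of $H$ on $(M,\omega)$ falls within the framework of \cite[Theorem 1.3]{Par07}. Its moment map $\mu_H:M\to\mathfrak{h}^*$ is the composition of $\mu$ with the projection $\kg^*\to\mathfrak{h}^*$ dual to $\mathfrak{h}\hookrightarrow\kg$, and its properness is precisely the hypothesis of Theorem \ref{t0.100}. Therefore the analogue of (\ref{0.6}) and (\ref{e.restr}) holds with $G$ replaced by $H$, using Theorem \ref{t0.2} for the group $H$; in particular $Q_H(L)^{-\infty}\in R[H]$ is well defined and coincides with the formal geometric quantization of $(M,\omega,J^M,L)$ regarded as an $H$-manifold, whose reductions are the compact orbifolds $M_{\gamma_H}$ indexed by the dominant weights $\gamma_H$ of $H$.

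Then \cite[Theorem 1.3]{Par07} asserts that the formal geometric quantization is compatible with restriction to a subgroup with proper moment map: as virtual $H$-representations, $Q_G(L)^{-\infty}|_H=Q_H(L)^{-\infty}$, which is (\ref{0.89}). The finite-multiplicity statement is then immediate: the multiplicity of an irreducible $H$-representation $V^H_{\gamma_H}$ in $Q_H(L)^{-\infty}$ equals $Q(L_{\gamma_H})\in\Z$, a well-defined integer because $M_{\gamma_H}$ is compact, so by (\ref{0.89}) every irreducible $H$-representation occurs in $Q_G(L)^{-\infty}$ with finite multiplicity. The only care needed is to confirm that the reductions, prequantum line bundles and almost complex structures appearing in (\ref{e.restr}) and in its $H$-analogue are literally the ones used in \cite{W01}, \cite{Par07}, which is clear from the constructions recalled in the Introduction; granting this, the proof is a direct application of \cite[Theorem 1.3]{Par07}. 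The principal point of substance is thus already contained in Theorem \ref{t0.2}, which converts the a priori representation-theoretic object $Q_G(L)^{-\infty}$ into a symplectically defined one to which Paradan's functoriality result applies.
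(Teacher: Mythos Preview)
Your proposal is correct and follows exactly the approach indicated in the paper: the paper does not give a detailed proof of Theorem \ref{t0.100} but simply states (just before the theorem) that it follows ``by combining Theorem \ref{t0.2}, (\ref{0.6}) with \cite[Theorem 1.3]{Par07},'' which is precisely the argument you have written out. The only cosmetic difference is that in \cite{Par07} the finite-multiplicity assertion is part of the statement of Theorem 1.3 rather than a consequence of the identity, but this does not affect the logic.
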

It would be interesting to give a direct proof of Theorem \ref{t0.100}.

\def\cprime{$'$} \def\cprime{$'$}
\providecommand{\bysame}{\leavevmode\hbox to3em{\hrulefill}\thinspace}
\providecommand{\MR}{\relax\ifhmode\unskip\space\fi MR }
\providecommand{\MRhref}[2]{%
  \href{http://www.ams.org/mathscinet-getitem?=#1}{#2}
}
\providecommand{\href}[2]{#2}

\end{document}